\newcommand{\rl}{\mathbb R}
\newcommand{\inte}{\mathbb N}
\newcommand{\allinte}{\mathbb Z}
\newtheorem{thm}{Theorem}[section]
\newtheorem{mthm}{Main Theorem}[section]
\newtheorem{prop}[thm]{Proposition}
\newtheorem{lem}[thm]{Lemma}
\newtheorem{cor}[thm]{Corollary}
\begin{document}

\title[Lifting measures to inducing schemes]
{Lifting measures to inducing schemes}

\author{Ya. Pesin}
\address{Department of Mathematics, Pennsylvania State
University, University Park, PA 16802}
\email{pesin@math.psu.edu}
\author{S. Senti}
\address{Instituto de Matematica, Universidade Federal do Rio de Janeiro, C.P. 68 530, CEP 21945-970, R.J., Brazil}
\email{senti@impa.br}
\author{K. Zhang}
\address{Department of Mathematics, 
University of Maryland, College Park, MD 20742}
\email{kezhang@math.umd.edu}

\date{\today}

\thanks{Y. Pesin and K. Zhang were partially supported by the
National Science Foundation grant \#DMS-0503810.} \subjclass{37D25,
37D35, 37E05, 37E10}


\rightline{To the memory of Bill Parry}

\begin{abstract} In this paper we study the liftability property for piecewise continuous maps of compact metric spaces, which admit inducing schemes in the sense of \cite{PS2, PS1}. We show that under some natural assumptions on the inducing schemes -- which hold for many known examples -- any invariant ergodic Borel probability measure of sufficiently large entropy can be lifted to the tower associated with the inducing scheme. The argument uses the construction of connected Markov extensions due to Buzzi \cite{Buzzi99}, his results on the liftability of measures of large entropy, and a generalization of some results by Bruin \cite{Bru95} on relations
between inducing schemes and Markov extensions. We apply our results to study the liftability problem for one-dimensional cusp maps (in particular, unimodal and multimodal maps) and for some multi-dimensional maps.
\end{abstract}

\maketitle


\section{Introduction}


In \cite{PS2, PS1}, the authors studied existence and uniqueness of
equilibrium measures for a continuous map $f$ of a compact
topological space $I$, which admits an inducing scheme $\{S,\tau\}$
where $S$ is a countable collection of disjoint Borel subsets $J$ of $I$
-- the basic elements  -- and $\tau$ the integer-valued function on
$S$ -- the inducing time (see the next section for the definition of
inducing schemes and some relevant information). More precisely,
they determined a class $\mathcal H$ of potential functions
$\varphi: I\to{\mathbb R}$ for which one can find a unique
equilibrium measure $\mu_\varphi$ satisfying
\begin{equation}\label{var_principle}
h_{\mu_\varphi}(f)+\int_I\,\varphi\,d\mu_\varphi
=\sup\,\bigl\{h_\mu(f)+\int_I\,\varphi\,d\mu\bigr\}.
\end{equation}
Here $h_\mu(f)$ is the metric entropy of the map $f$ and the supremum is taken over $f$-invariant ergodic Borel probability
measures $\mu$, which are \emph{liftable} with respect to the inducing scheme.

If a map admits an inducing scheme its action on a subset of the phase space can be described symbolically as a tower over the full (Bernoulli) shift on a countable set of states. One can provide some conditions for the existence and uniqueness of equilibrium measures for this shift with respect to the corresponding potentials (which are lifts of potentials $\varphi$ to the tower). One then restates these
conditions into requirements on the original potential. 

This naturally leads to the \emph{liftability problem}: describing all the liftable measures, i.e., those that can be expressed as the images under the lift operator (see Equation~\eqref{lift}) of invariant measures for the shift. The goal of this paper is to introduce some conditions on the inducing scheme  guaranteeing that every $f$-invariant ergodic Borel probability measure of sufficiently large entropy, which gives positive weight to the tower, is liftable. A different point of view is to construct an inducing scheme for which a given invariant measure is liftable. This provides a symbolic description of the measure but allows only to compare this measure with invariant measures, which can be lifted to the \emph{same} tower (since the lift operator depends on the inducing scheme).

In Section~\ref{inducing_scheme} we introduce inducing schemes and state one of the main results in \cite{PS1} on the existence and uniqueness of equilibrium measures within the class of liftable
measures. Our inducing schemes are determined by Conditions (H1),
(H2) and (H3). Condition (H1) introduces the induced map $F$ on the
base $W$. The latter is the disjoint union of basic elements of the inducing scheme, i.e., $W=\bigcup_{J\in S}\,J$. Condition (H2) states that the partition of $W$ by basic elements $J$ is Bernoulli generating. This allows the unique symbolic coding of every point in the base in such a way that the induced map is conjugated to the full shift on a countable set of states. Condition (H3) implies that the coding \emph{captures} all Gibbs measures.

In Section~\ref{liftability_problem} we discuss the liftability
problem and some recent related results. In our study of liftability we
follow the approach by Bruin \cite{Bru95}: the tower associated with
the inducing scheme is ``embedded'' into the Markov extension
$(\check{I},\check{f})$ of the system $(I,f)$ in such a way that the
induced map is the first return time map to a certain subset of
the Markov extension. As a result one reduces the liftability to the
inducing scheme to the liftability to the Markov extension. The latter
can be ensured by some results of Keller \cite{Kel89} and Buzzi
\cite{Buzzi99}. In \cite{Bru95}, the above approach was used to establish liftability of the absolutely continuous invariant measure of positive entropy for some one-dimensional maps. Extending this approach to general multidimensional inducing schemes and general invariant measures faces substantial technical problems and requires additional assumptions on the inducing scheme (see Section~\ref{relations}). 

Since we deal with general multidimensional inducing schemes an essential new element of our construction is the use of Markov extensions in the sense of Buzzi for piecewise invertible continuous maps of compact metric spaces. We describe this Markov extensions in Section~\ref{extension} and we state a result, which provides two conditions (called (P1) and (P2)) for liftability to the Markov extension of measures with large entropy (see \cite{Buzzi99}). Condition (P1) means that the topological entropy of the system is not concentrated merely on the image of the boundary of the invertible pieces. Condition (P2) requires the existence of a set $I_0$ of full $\mu$-measure with respect to any invariant ergodic Borel measure $\mu$ of sufficiently large entropy and such that the partition of the system into invertible pieces is generating on $I_0$ with respect to $\mu$.

The Markov extensions constructed by Buzzi have the important
feature that any invariant \emph{ergodic} measure for the system
induces a measure on the Markov extension, which is also ergodic.
This is crucial in our study of liftability, since in view of
\eqref{var_principle} we are only concerned with lifting ergodic
invariant measures. In Section~\ref{relations} we study relations
between Markov extensions and inducing schemes.

To obtain liftability we need to replace Condition (H1) with a
slightly stronger Condition (M). Roughly speaking it requires that
the inducing time is as small as possible. We also need a requirement called Condition (C), which expresses compatibility of the inducing scheme and the Markov extension. 

In applications to one-dimensional systems and in some other situations, where controlling the distortion of derivatives is important, one often constructs inducing schemes satisfying a slightly modified version of Condition (M) that we call Condition (M$^+$). The principle difference between them is the way in which the action of the system on each element of the tower is extended to a small neighborhood of the element. In this case we also replace Condition (C) with its modified version, Condition (C$^+$).

In Section~\ref{liftabilityresult} we prove our main result.

\begin{mthm} Let $f$ be a piecewise invertible continuous map of a
compact metric space admitting an inducing scheme $\{S,\tau\}$, which satisfies Conditions (M) and (C) or Conditions (M$^+$) and (C$^+$). Assume that $f$ has finite topological entropy $h_{top}(f)$ and satisfies Conditions (P1) and (P2). Then there exists $0\le T < h_{top}(f)$ such that any invariant ergodic Borel probability measure $\mu$ with $\mu(W)>0$ and $h_\mu(f)>T$ is liftable.
\end{mthm}
In the last section of the paper we describe a general approach, exploiting the notion of \emph{nice sets}, to construct inducing schemes satisfying all the conditions of the Main Theorem. Many known inducing schemes can be constructed using this approach. We also  present some applications of our results to: 1) one-dimensional cusp and multimodal (in particular, unimodal) maps, 2) polynomial maps of the Riemann sphere, 3) some multi-dimensional piecewise expanding maps. In some situations (e.g., one-dimensional maps and polynomial maps of the Riemann sphere) the requirement, that the entropy of the liftable measure should be large, can be weakened to the requirement that it is just positive. Since for many ``natural'' potential functions the corresponding equilibrium measures must have positive entropy, this implies that the equilibrium measure is unique within the class of \emph{all} invariant Borel probability measures supported on the tower (see Section~\ref{liftability_problem} for further discussion).

{\bf Acknowledgments.} We would like to thank M. Todd for fruitful discussions. Samuel Senti wants to thank the Shapiro fund and the Pennsylvania State University for the support during his visit when this research was conducted.


\section{Inducing schemes}\label{inducing_scheme}


To state the liftability problem more precisely, let us introduce
the notion of the inducing scheme. Let $f$ be a continuous map of a compact topological space $I$. Throughout the paper we assume that the topological entropy $h_{top}(f)$ is finite. Let $S$ be a countable collection of disjoint Borel subsets of $I$ and $\tau: S\to\inte$ a positive integer-valued function. Set $W=\bigcup_{J\in S}J$ and consider the function $\tau:I\to \inte$ given by
$$
\tau(x)=\begin{cases}
\tau(J), &  x\in J \\
0, &  x\notin  W.
\end{cases}
$$
Following \cite{PS1} we call the pair $\{S,\tau\}$ an \emph{inducing
scheme} for $f$ if the following conditions hold:
\begin{enumerate}
\item[(H1)] there exists a connected open set $U_J\supset J$ such that $f^{\tau(J)}|U_J$ is a homeomorphism onto its image and
$f^{\tau(J)}(J)=W$;
\item[(H2)] the partition $\xi$ of the set $\bigcup_{J\in S}\,\bar J$ induced by the sets $J\in S$ is Bernoulli generating; this means that for any countable collection of
elements $\{J_k\}_{k\in\mathbb N}$, the intersection
$$
\overline{J_1}\cap\Bigl(\bigcap_{k\ge 2}
f^{-\tau(J_1)}\circ\cdots\circ f^{-\tau(J_{k-1})}(\overline{J_k})\Bigr)
$$
is not empty and consists of a single point; here $f^{-\tau(J)}$
denotes the inverse branch of the restriction $f^{\tau(J)}|J$ and
$f^{-\tau(J)}(I)=\emptyset$ provided $I\cap f^{\tau(J)}(J)=\emptyset$.
\end{enumerate}
We call $W$ the \emph{inducing domain} and $\tau(x)$ the \emph{inducing time}.
Further, we introduce the \emph{induced map}
$F: W\to W$ by $F|J=f^{\tau(J)}|J$ for $J\in S$ and we set
\begin{equation}\label{set_X}
X:= \bigcup_{J\in S}\bigcup_{k=0}^{\tau(J)-1}f^k(J).
\end{equation}
As the inducing time is not necessarily the first return time, this
union is not necessarily a disjoint union.

Condition (H2) allows us to view the induced map $F$ as the
one-sided Bernoulli shift $\sigma$ on a countable set of states $S$.
More precisely, define the coding map
$h: S^{\mathbb N}\to\cup_{J\in S}\,{\bar J}$ by
$h:\omega=(a_0, a_1,\cdots) \mapsto x$ where $x$ is such that $x\in\overline{J}_{a_0}$ and
\[
f^{\tau(J_{a_k})}\circ\cdots\circ f^{\tau(J_{a_0})}(x)\in
\overline{J}_{a_{k+1}} \quad\mbox{ for }\quad k\ge 0.
\]
\begin{prop}[see \cite{PS2, PS1}] \label{conjugation}
The map $h$ is well-defined, continuous and
$W\subset h(S^{\mathbb N})$. It is one-to-one on $h^{-1}(W)$ and is a
topological conjugacy between $\sigma|h^{-1}(W)$ and $F|W$, i.e.,
$$
h\circ\sigma|h^{-1}(W)=F\circ h|h^{-1}(W).
$$
\end{prop}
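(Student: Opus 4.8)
The plan is to deduce all of the assertions directly from Conditions (H1) and (H2), organizing the argument around the \emph{cylinders} of the coding. Given $\omega=(a_0,a_1,\dots)\in S^{\mathbb N}$ and $n\ge 0$, I would set
\[
C_n(\omega):=\overline{J}_{a_0}\cap\bigcap_{k=1}^{n}f^{-\tau(J_{a_0})}\circ\cdots\circ f^{-\tau(J_{a_{k-1}})}(\overline{J}_{a_k}),
\]
a closed, hence compact, subset of $I$ with $C_{n+1}(\omega)\subseteq C_n(\omega)$. Condition (H2) is exactly the statement that $\bigcap_{n\ge 0}C_n(\omega)$ is a single point; that point is $h(\omega)$, and unwinding the inverse branches identifies it with the point $x$ described in the statement, so $h$ is well defined. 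Continuity is then automatic: the nested nonempty compact sets $C_n(\omega)$ have the single point $h(\omega)$ as their intersection, so their diameters tend to $0$; hence if $\omega'$ agrees with $\omega$ on its first $n+1$ symbols then $h(\omega)$ and $h(\omega')$ both lie in $C_n(\omega')=C_n(\omega)$ and are therefore close, and since the cylinders form a basis for the product topology on $S^{\mathbb N}$ this gives continuity of $h$.

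Next I would treat $W\subseteq h(S^{\mathbb N})$ together with the coding of points of $W$, which is the engine for the last two claims. Given $x\in W$, the basic elements being pairwise disjoint there is a unique $a_0$ with $x\in J_{a_0}$; by (H1), $F(x)=f^{\tau(J_{a_0})}(x)\in f^{\tau(J_{a_0})}(J_{a_0})=W$, so there is a unique $a_1$ with $F(x)\in J_{a_1}$, and inductively a unique $a_k$ with $F^k(x)\in J_{a_k}$. Writing $\iota(x):=(a_0,a_1,\dots)$, one has $f^{\tau(J_{a_{k-1}})}\circ\cdots\circ f^{\tau(J_{a_0})}(x)=F^k(x)\in J_{a_k}\subseteq\overline{J}_{a_k}$, so $x\in C_n(\iota(x))$ for all $n$, whence $h(\iota(x))=x$ by the uniqueness in (H2). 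This gives $W\subseteq h(S^{\mathbb N})$ and produces a right inverse $\iota\colon W\to S^{\mathbb N}$ of $h$.

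It then remains to show that whenever $h(\omega)=x\in W$ one has $\omega=\iota(x)$. Injectivity of $h$ on $h^{-1}(W)$ follows at once, and so does the conjugacy: from the definition of $h$ one reads off $h\circ\sigma(\omega)=f^{\tau(J_{a_0})}(h(\omega))$ for \emph{every} $\omega$, and once $a_0$ is known to index the basic element containing $x$ this equals $f^{\tau(J_{a_0})}(x)=F(x)=F\circ h(\omega)$. By applying $\sigma$ repeatedly, it suffices to show that the first symbol $a_0$ of $\omega$ satisfies $x\in J_{a_0}$. The mechanism I would use is this: the point $h(\sigma\omega)=f^{\tau(J_{a_0})}(x)$ lies in $\overline{J}_{a_1}$ with the correct continuation; provided it lies in $W$, the inverse branch $f^{-\tau(J_{a_0})}$ is defined there, carries it into $J_{a_0}$, and the resulting point satisfies every condition defining $h(\omega)$, so by the uniqueness half of (H2) it equals $x$, forcing $x\in J_{a_0}$. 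Propagating this along the itinerary reduces everything to the statement that, for $x\in W$, the orbit $f^{\tau(J_{a_{k-1}})}\circ\cdots\circ f^{\tau(J_{a_0})}(x)$ remains in $W$.

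The straightforward parts are well-definedness and continuity, which fall out of (H2) and compactness, and the inclusion $W\subseteq h(S^{\mathbb N})$. The hard part is the last step, and its difficulty is essentially bookkeeping: one has to keep carefully separated the basic element $J$, its closure $\overline{J}$ on which the coding lives, and the neighbourhood $U_J\supset J$ on which $f^{\tau(J)}$ is a homeomorphism, and use the uniqueness clause of (H2) at the right moments, so as to rule out that the symbolic code of a point of $W$ can ``leak'' onto the boundary of a wrong basic element. That is precisely the point at which both the injectivity of $h$ on $h^{-1}(W)$ and the intertwining with $\sigma$ are secured.
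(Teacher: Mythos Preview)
The paper does not give a proof of this proposition; it simply cites \cite{PS2,PS1}. So there is no in-paper argument to compare against, and your outline has to be judged on its own.

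Your treatment of well-definedness, continuity, and $W\subset h(S^{\mathbb N})$ is correct and standard: the nested compact cylinders $C_n(\omega)$ together with the singleton clause of (H2) give the coding map, and the itinerary map $\iota$ furnishes the right inverse. The relation $h\circ\sigma(\omega)=f^{\tau(J_{a_0})}(h(\omega))$ also follows cleanly from $C_{n+1}(\omega)\subset f^{-\tau(J_{a_0})}(C_n(\sigma\omega))$.

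The injectivity step, however, contains a genuine circularity rather than mere bookkeeping. Your mechanism is: \emph{if} $h(\sigma\omega)\in W$, then the inverse branch $f^{-\tau(J_{a_0})}$ carries it into $J_{a_0}$ and uniqueness forces $x\in J_{a_0}$. That implication is fine. But you give no independent reason why $h(\sigma\omega)\in W$, and your ``reduction'' to the statement that the orbit $f^{\tau(J_{a_{k-1}})}\circ\cdots\circ f^{\tau(J_{a_0})}(x)$ stays in $W$ is the conclusion restated: knowing $x\in J_{a_0}$ is precisely what gives $f^{\tau(J_{a_0})}(x)=F(x)\in W$, so each step of the chain presupposes the previous one, with no base case from which to run the induction. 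Nothing in (H1)--(H2) alone tells you that some $h(\sigma^k\omega)$ lands in $W$.

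What actually closes this gap in the concrete settings of \cite{PS2,PS1} (and in all the applications in the present paper) is a separation property of the basic elements: $\overline{J_a}\cap J_b=\emptyset$ whenever $a\ne b$, which is automatic when the $J$'s are pairwise disjoint open intervals or, more generally, open sets whose closures meet only along common boundary. Under that hypothesis $x\in\overline{J}_{a_0}\cap W$ forces $x\in J_{a_0}$ in one line, and injectivity and the conjugacy follow immediately. You should either invoke such a property explicitly or supply a different argument; as written, the last step does not go through.
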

In what follows we assume that the following condition holds:
\begin{enumerate}
\item[(H3)] if $\nu$ is a shift invariant measure on $S^{\mathbb N}$ such that $\nu(U)>0$ for any open set $U\subset S^{\mathbb N}$, then $\nu(S^{\mathbb N}\setminus h^{-1}(W))=0$.
\end{enumerate}
This condition allows one to transfer Gibbs measures for the shift via the conjugacy map to measures which give full weight to the base $W$ and are invariant under the induced map $F$. We stress that this condition will not be used in our study of liftability.

For a Borel probability measure $\nu$ on $W$ set
\[
Q_{\nu}:=\sum_{J\in S}\tau(J)\,\nu(J)=\int_W\tau(x)\,d\nu(x).
\]
If $Q_\nu<\infty$, we define the \emph{lifted measure} $\mathcal{L}(\nu)$ on the set $X$ (see \eqref{set_X}) as follows: for any Borel subset $E\subset X$,
\begin{equation}\label{lift}
\mathcal{L}(\nu)(E):=\frac{1}{Q_{\nu}}\sum_{J\in S}\sum^{\tau(J)-1}_{k=0} \nu(f^{-k}(E)\cap J).
\end{equation}
We denote by $\mathcal{M}(f,I)$ the class of all $f$-invariant Borel
probability measures on $I$ and by $\mathcal{M}(F,W)$ the class of
all $F$-invariant Borel probability measures on $W$. Given an
inducing scheme $\{S, \tau\}$, we call a measure
$\mu\in\mathcal{M}(f,I)$ \emph{liftable} if $\mu(W)>0$ and there
exists a measure $i(\mu)\in \mathcal{M}(F,W)$ such that
$\mu=\mathcal{L}(i(\mu))$. We call $i(\mu)$ the \emph{induced
measure} for $\mu$ and we denote the class of all liftable measures
by $\mathcal{M}_L(f,X)$. Observe that measures for which $\mu(W)=0$ are trivially not liftable. Also observe that different inducing schemes may lead to different classes of liftable measures.

By a result in \cite{Zwe05}, if $\mu\in\mathcal{M}_L(f,X)$
is ergodic, then the measure $i(\mu)$ is unique, ergodic, and
has integrable inducing time: $Q_{i(\mu)}<\infty$.

By Proposition~\ref{conjugation}, liftable measures are the image of shift invariant measures on the countable symbolic space under the \emph{lift} operator $\mathcal{L}\circ h_*$ where $\mathcal{L}$ is defined in \eqref{lift} and $h$ is the coding map. Certain important properties of the shift invariant measures can then be transferred to liftable measures, as is the case of equilibrium measures. To illustrate this let us first describe a class of potential functions $\varphi: I\to{\mathbb R}$, which admit unique equilibrium measures. Define the \emph{induced potential function} $\bar\varphi: W\to\rl$ by
$$
\bar\varphi(x):=\sum_{k=0}^{\tau(J)-1}\varphi(f^k(x)), \quad x\in J.
$$
Also, define
\begin{equation}\label{supremum}
P_L(\varphi):= \sup_{\mathcal{M}_L(f,X)}\{h_\mu(f)+\int_X\varphi\,d\mu\}.
\end{equation}
It is shown in \cite{PS1} that $P_L(\varphi)$ is finite under the conditions of Theorem \ref{recc}. We say that the induced potential function $\bar\varphi$ is \emph{locally H\"{o}lder continuous} if there exist $A>0$ and $0<\gamma<1$ such that $V_n(\bar\varphi)\le A\gamma^n$ for $n\ge 1$. Here $V_n(\bar\varphi)$ is the \emph{$n$-variation} defined by
\[
V_n(\bar\varphi):=\sup_{[b_1,\dots, b_n]}\sup_{x,x'\in [b_1,\dots, b_n]} \{|\bar\varphi(x)-\bar\varphi(x')|\},
\]
and the cylinder set $[b_1,\dots, b_n]$ is the maximum subset of $J_{b_1}$ such that for every $1\le k\le n-1$
\[
f^{\tau(J_{b_{k-1}})}\circ\cdots\circ f^{\tau(J_{b_1})}([b_1,\dots,
b_n])\subseteq\overline{J_{b_k}}.
\]
\begin{thm}[see \cite{PS1}]\label{recc}
Assume that the map $f$ admits an inducing scheme satisfying Conditions~(H1)--(H3). Also assume that the induced potential function $\bar\varphi$ is locally H\"{o}lder continuous, that
\[
\sum_{J\in S}\,\sup_{x\in J}\,\exp\,\bar\varphi(x)<\infty,
\]
and that there exists $\varepsilon>0$ such that
\[
\sum_{J\in S}\,\tau(J)\sup_{x\in J}\, \exp\,(\bar\varphi-(P_L(\varphi)-\varepsilon) \tau(x))<\infty.
\]
Then there exists an equilibrium measure $\mu_\varphi$ for $\varphi$ (see \eqref{var_principle}). This measure is liftable and is unique among all the liftable measures, i.e., it is the only measure in ${\mathcal M}_L(f,X)$ for which
$$
h_{\mu_\varphi} + \int_X\varphi \,d\mu_\varphi = P_L(\varphi).
$$
\end{thm}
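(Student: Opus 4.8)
The plan is to push the whole problem onto the full shift $(\sigma,S^{\mathbb N})$ through the conjugacy $h$ of Proposition~\ref{conjugation}, to run the thermodynamic formalism for countable Markov shifts there, and to carry the answer back to $f$ by means of the lift operator $\mathcal L$ and Abramov's formula. Set $c:=P_L(\varphi)$, finite by~\cite{PS1} --- the first summability hypothesis already bounds the Gurevich pressure of $\bar\varphi$, hence $c$, from above. Consider the normalized induced potential $\psi_0:=\bar\varphi-c\tau$ on $W$ and its pull-back $\psi:=\psi_0\circ h$ to $S^{\mathbb N}$. The first thing to verify is that $\psi$ meets the hypotheses of the symbolic theory: since $\tau$ is constant on each $J\in S$, the function $\tau\circ h$ is constant on $1$-cylinders, so $V_n(\psi)=V_n(\bar\varphi\circ h)\le A\gamma^n$ for all $n\ge1$, i.e.\ $\psi$ is locally H\"older; and, because $\tau\ge1$ forces $\tau(J)\,\exp(\varepsilon\tau(J))\ge1$, the third hypothesis gives
\[
\sum_{J\in S}\sup_{x\in J}\exp\psi_0(x)\ \le\ \sum_{J\in S}\tau(J)\sup_{x\in J}\exp\!\bigl(\bar\varphi(x)-(c-\varepsilon)\tau(x)\bigr)\ <\ \infty,
\]
so $\psi$ has summable exponential of suprema over $1$-cylinders.

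Since $\psi$ is locally H\"older with summable exponential suprema, its Gurevich pressure $P_G(\sigma,\psi)$ is finite and the Ruelle--Perron--Frobenius theorem for the full shift applies: there is a unique $\sigma$-invariant Gibbs measure $\nu$ for $\psi$, which is at the same time the unique equilibrium measure, $h_\nu(\sigma)+\int\psi\,d\nu=P_G(\sigma,\psi)$. The Gibbs bound $\nu([J])\le C\exp(-P_G(\sigma,\psi))\sup_{x\in J}\exp\psi_0(x)$, together with the same estimate as above, yields $Q_\nu=\sum_J\tau(J)\nu([J])<\infty$. Hence $\bar\nu:=h_*\nu$ is an $F$-invariant probability measure on $W$ (it gives full weight to $h(S^{\mathbb N})\supset W$ by Condition~(H3)) with $Q_{\bar\nu}<\infty$, so $\mu_\varphi:=\mathcal L(\bar\nu)$ is a well-defined $f$-invariant Borel probability measure on $X$, liftable with $i(\mu_\varphi)=\bar\nu$; this $\mu_\varphi$ will be the claimed equilibrium measure.

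The crux is to show that the induced pressure $\mathfrak p(c'):=P_G(\sigma,\bar\varphi\circ h-c'\,\tau\circ h)$ (so that $\mathfrak p(c)=P_G(\sigma,\psi)$) vanishes at $c'=c$. For an $F$-invariant $\eta$ on $W$ with $Q_\eta<\infty$, write $\nu_\eta:=(h^{-1})_*\eta$ (a $\sigma$-invariant measure, using that $h$ is one-to-one on $h^{-1}(W)$). Directly from~\eqref{lift} one has $\int_X\varphi\,d\mathcal L(\eta)=Q_\eta^{-1}\int_W\bar\varphi\,d\eta$, while Abramov's formula in the form valid for generalized inducing (see~\cite{Zwe05}) gives $h_{\mathcal L(\eta)}(f)=Q_\eta^{-1}h_\eta(F)$; combining and splitting off $c'\,\tau\circ h$,
\[
h_{\mathcal L(\eta)}(f)+\int_X\varphi\,d\mathcal L(\eta)\ =\ c'+\frac{1}{Q_\eta}\Bigl(h_{\nu_\eta}(\sigma)+\int\bigl(\bar\varphi\circ h-c'\,\tau\circ h\bigr)\,d\nu_\eta\Bigr)\ \le\ c'+\frac{\mathfrak p(c')}{Q_\eta}
\]
by the variational principle on the shift. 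Taking $c'=c$ and $\eta=\bar\nu$, the left side equals $h_{\mu_\varphi}(f)+\int\varphi\,d\mu_\varphi\le P_L(\varphi)=c$ since $\mu_\varphi$ is liftable, which forces $\mathfrak p(c)\le 0$. For the reverse inequality, the third hypothesis gives $\mathfrak p(c-\varepsilon)\le\log\sum_J\tau(J)\sup_J\exp(\bar\varphi-(c-\varepsilon)\tau)<\infty$, and since $\mathfrak p$ is convex, non-increasing, and tends to $-\infty$ (using the first hypothesis), it is finite and hence continuous on $(c-\varepsilon,\infty)$; were $\mathfrak p(c)<0$, continuity would produce $\delta>0$ with $\mathfrak p(c-\delta)<0$, and the displayed inequality at $c'=c-\delta$ would then force $h_\mu(f)+\int\varphi\,d\mu\le c-\delta$ for every liftable $\mu$, i.e.\ $P_L(\varphi)\le c-\delta<c$, a contradiction. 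Thus $\mathfrak p(c)=0$. The step I expect to be the most delicate is exactly this one --- identifying $\mathfrak p(P_L(\varphi))=0$ and, inside it, checking that the Gibbs measure at that value has integrable inducing time so that it genuinely lifts to a probability measure; both are what the summability hypotheses, especially the $\varepsilon$-margin in the third, are tailored to provide.

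Granted $\mathfrak p(c)=0$, the displayed identity with $\eta=\bar\nu$ gives $h_{\mu_\varphi}(f)+\int_X\varphi\,d\mu_\varphi=c=P_L(\varphi)$, so $\mu_\varphi$ attains the supremum~\eqref{supremum}: it is a liftable equilibrium measure for $\varphi$. For uniqueness, let $\mu$ be any liftable measure with $h_\mu(f)+\int_X\varphi\,d\mu=P_L(\varphi)$. Reducing to the case $\mu$ ergodic via the ergodic decomposition and the affineness of $\mu\mapsto h_\mu(f)+\int\varphi\,d\mu$, by~\cite{Zwe05} there is a unique ergodic $\eta=i(\mu)$ with $Q_\eta<\infty$ and $\mathcal L(\eta)=\mu$; plugging $\mathcal L(\eta)=\mu$ into the displayed identity with $c'=c$ forces $h_{\nu_\eta}(\sigma)+\int(\bar\varphi\circ h-c\,\tau\circ h)\,d\nu_\eta=0=\mathfrak p(c)$, so $\nu_\eta$ is an equilibrium measure for $\bar\varphi\circ h-c\,\tau\circ h$. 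By the uniqueness clause in the Ruelle--Perron--Frobenius theorem, $\nu_\eta=\nu$, whence $\eta=\bar\nu$ and $\mu=\mathcal L(\bar\nu)=\mu_\varphi$.
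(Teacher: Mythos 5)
Your argument is correct and is essentially the proof given in the cited reference \cite{PS1} (the present paper states Theorem~\ref{recc} without reproducing its proof): normalize the induced potential to $\bar\varphi-P_L(\varphi)\tau$, apply the Ruelle--Perron--Frobenius/Gibbs theory for locally H\"older potentials on the full countable shift, show via the Abramov and Kac formulas that the Gurevich pressure of the normalized potential vanishes at $P_L(\varphi)$ and that the Gibbs measure has integrable inducing time, and transfer existence and uniqueness back through $\mathcal L\circ h_*$. The only caveats are the standard admissibility conditions in the countable-state variational principle (e.g.\ finiteness of $h_{\nu_\eta}(\sigma)$, which follows from Abramov's formula together with $h_{top}(f)<\infty$ and $Q_\eta<\infty$), and these are handled in \cite{PS1} exactly as you indicate.
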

Let us stress that the class of liftable measures depends on the inducing scheme and thus so does the supremum in \eqref{supremum}. It may in principle happen that another measure also assumes the value of this supremum (or the absolute value of its free energy may even be larger, see examples in \cite{PesinZhang1}) but such a measure will not be liftable to the given inducing scheme.


\section{The liftability problem}\label{liftability_problem}


Let us stress again that the class of liftable measures
${\mathcal M}_\mathcal{L}(f,X)$ depends on the choice of the inducing
scheme $\{S,\tau\}$ and that liftable measures are supported on $X$
(i.e., $\mu(X)=1$; in particular, $\mu(W)>0$). In \cite{PesinZhang1} an
example of an inducing scheme is given for which there exists a
non-liftable measure supported on $X$. Also, in this example there exists another (non-liftable) measure supported outside $X$. Both measures are equilibrium measures for some potential functions.

We begin the study of the liftability problem by stating two general criteria that guarantee that a given measure $\mu\in\mathcal{M}(f,I)$ is liftable. Given a Borel set $A\subset X$ and $J\in S$, define
$$
\epsilon(J,A):=\frac{1}{\tau(J)}\text{Card}\{0\le k\le \tau(J)-1: \,
f^k(J)\cap A \ne \emptyset\},
$$
where $\text{Card }E$ denotes the cardinality of the set $E$.

\begin{thm}[see \cite{PesinZhang1}]
An $f$-invariant Borel ergodic probability measure $\mu$ with $\mu(W)>0$ is liftable if there exists a number $N\ge 0$ and a subset $A\subset I$ such that $\mu(A)>\sup_{\tau(J)> N}\epsilon(J,A)$.
\end{thm}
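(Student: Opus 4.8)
The plan is to use the measure $\mu$ to build an $F$-invariant measure $\nu$ on $W$ with $Q_\nu < \infty$ and $\mathcal L(\nu) = \mu$, thereby exhibiting $\mu$ as liftable. The natural candidate for $\nu$ is the normalized restriction-and-induce of $\mu$: since $\mu$ is ergodic with $\mu(W) > 0$, almost every orbit visits $W$, so one would like to set $\nu = \frac{1}{\mu(W_\infty)}\mu|_{W_\infty}$ suitably corrected, where $W_\infty$ is the set of points in $W$ whose forward orbit returns to $W$ infinitely often under the inducing dynamics. The first step is therefore to identify a natural "return" structure: $f$ induces on $W$ a map governed by the $\tau$-blocks, and one must check that $\mu$-a.e.\ point of $W$ lies in $\bigcap_{n\ge 0} F^{-n}(W)$ up to $\mu$-measure issues; here ergodicity of $\mu$ together with $\mu(W) > 0$ gives that the orbit returns to $W$ infinitely often, but one must be careful that the inducing time is not the first return time, so the "induced" return must be set up via the coding of Proposition~\ref{conjugation}.

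The crux is the hypothesis $\mu(A) > \sup_{\tau(J) > N}\epsilon(J,A)$. The way I expect to use it: suppose for contradiction that $\mu$ is \emph{not} liftable. By a Kac-type / ergodic-theoretic argument, non-liftability forces the return time from $W$ (in the inducing sense) to be non-integrable, equivalently the lift construction "escapes to infinity," meaning that a definite proportion of the $\mu$-mass sits on long $\tau$-blocks $f^k(J)$ with $\tau(J)$ large. Quantitatively, one estimates $\mu(A)$ by decomposing an $F$-invariant (or $f$-invariant) measure along the tower: writing $\mu$ as an average over the blocks $\{f^k(J) : J \in S,\ 0 \le k \le \tau(J)-1\}$ weighted by $\nu(J)$ and divided by $Q_\nu$, one gets $\mu(A) = \frac{1}{Q_\nu}\sum_J \#\{k : f^k(J)\cap A \ne\emptyset\}\cdot(\text{something}) = \frac{1}{Q_\nu}\sum_J \tau(J)\epsilon(J,A)\,\nu(J)$ if $\mu = \mathcal L(\nu)$ — and this last quantity is a weighted average of the numbers $\epsilon(J,A)$ with weights $\tau(J)\nu(J)/Q_\nu$ summing to $1$. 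Splitting the sum into $\tau(J) \le N$ and $\tau(J) > N$: if the mass on short blocks is a negligible fraction of $Q_\nu$ (which is exactly what failure of integrability/liftability produces in the limit), then $\mu(A)$ is pushed below $\sup_{\tau(J)>N}\epsilon(J,A)$, contradicting the hypothesis.

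More precisely, I would run this as a direct construction rather than by contradiction: form the finite approximations $\nu_M$ obtained by truncating at inducing time $M$ (discarding blocks with $\tau(J) > M$), show $\{\nu_M\}$ is tight using precisely the inequality — the hypothesized gap $\mu(A) - \sup_{\tau(J)>N}\epsilon(J,A) =: \delta > 0$ gives a uniform lower bound on the total mass carried by blocks with $\tau(J) \le N$, hence a uniform upper bound on how much the normalization $Q_{\nu_M}$ can be inflated by long blocks — then pass to a weak-$*$ limit $\nu$, verify $F$-invariance (using continuity of $F$ on the coded space and Proposition~\ref{conjugation}), verify $Q_\nu < \infty$ from the same mass bound, and finally check $\mathcal L(\nu) = \mu$ by testing against continuous functions supported on individual blocks $f^k(J)$ and using $f$-invariance of $\mu$.

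The main obstacle will be the very last identification $\mathcal L(\nu) = \mu$: one must rule out loss of mass, i.e.\ that $\nu$ sees all of $\mu$'s mass on $X$ and not a proper sub-measure. This is exactly where the quantitative gap $\delta > 0$ is indispensable — it prevents escape of mass to the "infinitely long returns" — and making the tightness/no-escape estimate airtight, including the bookkeeping between counting $k$'s with $f^k(J) \cap A \ne \emptyset$ and the actual $\mu$-measure $\mu(f^{-k}(A)\cap J)$, is the technical heart of the argument. A secondary subtlety is that $\tau$ need not be a first return time, so the blocks $f^k(J)$ overlap; one must check the lift formula \eqref{lift} still reconstructs $\mu$ correctly despite this, which follows from $f$-invariance but requires care in the set-theoretic accounting.
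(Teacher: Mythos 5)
The paper does not actually prove this statement --- it is quoted from \cite{PesinZhang1} --- so your proposal can only be measured against the standard argument. You have correctly isolated the key mechanism: for any $\nu$ with $\mathcal L(\nu)=\mu$ one has
$\mu(A)\le \tfrac{1}{Q_\nu}\sum_J\tau(J)\,\epsilon(J,A)\,\nu(J)$ (an inequality, not an equality: each term $\nu(f^{-k}(A)\cap J)$ is $\le\nu(J)$ and vanishes unless $f^k(J)\cap A\ne\emptyset$), and splitting the sum at $\tau(J)\le N$ gives $\mu(A)\le N/Q_\nu+\sup_{\tau(J)>N}\epsilon(J,A)$, hence $Q_\nu\le N/\delta$ with $\delta=\mu(A)-\sup_{\tau(J)>N}\epsilon(J,A)$. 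That is exactly the bound the hypothesis is designed to produce.

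The gap is that you only establish this bound for a measure $\nu$ \emph{already known} to satisfy $\mathcal L(\nu)=\mu$, which is circular: the whole problem is to produce such a $\nu$. Your proposed approximating sequence --- ``truncate at inducing time $M$'' --- does not come with any relation to $\mu$ or any approximate $F$-invariance, so the inequality cannot be transferred to it, and tightness does not follow. The missing idea is to run the estimate pointwise along a $\mu$-generic orbit rather than on a hypothetical lift: for $\mu_W$-a.e.\ $x\in W$ set $\tau_n(x)=\sum_{j=0}^{n-1}\tau(F^j(x))$; the number of $i<\tau_n(x)$ with $f^i(x)\in A$ is at most $\sum_{j<n}\tau(J_{F^jx})\,\epsilon(J_{F^jx},A)$, because a visit to $A$ at time $i$ inside the block over $J$ forces $f^{i}(J)\cap A\ne\emptyset$. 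Birkhoff's theorem for $f$ and $\mathbf 1_A$ then gives
$\mu(A)\le\liminf_n\bigl(Nn/\tau_n(x)\bigr)+\sup_{\tau(J)>N}\epsilon(J,A)$, i.e.\ $\liminf_n\tau_n(x)/n\le N/\delta$ almost everywhere. This is the genuine quantitative output of the hypothesis; from it one either invokes the criterion of \cite{Zwe05} or runs the Ces\`aro construction $\nu_n=\tfrac1n\sum_{j<n}F^j_*\mu_W$, for which $Q_{\nu_n}=\int(\tau_n/n)\,d\mu_W$ is controlled along a subsequence, yielding tightness, an $F$-invariant weak limit with $Q_\nu<\infty$, and $\mathcal L(\nu)=\mu$. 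Your second paragraph's contradiction framing (``non-liftability forces the return time to be non-integrable'') is also circular for the same reason: if $\mu$ is not liftable there is no induced measure whose return time could fail to be integrable. So the skeleton is right, but the step you yourself call the technical heart is precisely where the proof lives, and the idea needed to carry it out (the pointwise Birkhoff count block by block) is absent from the proposal.
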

\begin{thm}[see \cite{Zwe05}]
A measure $\mu\in\mathcal{M}(f,I)$ with $\mu(W)>0$ and with integrable inducing time (i.e., $\tau\in L^1(I,\mu|W)$) is liftable.
\end{thm}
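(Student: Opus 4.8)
Here is how I would attack the statement.

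\medskip

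The plan is to realize $\mu$ as the projection of an $\hat F$-invariant probability measure on the tower attached to the scheme, building that measure by a Krylov--Bogolyubov argument in which the hypothesis $\tau\in L^1(I,\mu|W)$ is used exactly once --- to rule out escape of mass ``up the tower''. We may assume $\mu$ is ergodic (the case relevant to \eqref{var_principle}); then, since $X\supseteq W$ is forward invariant under $f$ (for $x\in f^k(J)$ with $0\le k<\tau(J)$ one has $f(x)\in f^{k+1}(J)\subseteq X$, because $f^{\tau(J)}(J)=W\subseteq X$) and $\mu(W)>0$ forces $\mu$-a.e.\ forward orbit to enter $W$, it follows that $\mu(X)=1$ --- which is in any case necessary for liftability --- so the problem is genuinely to realize $\mu$ on the tower. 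Put $\Delta:=\bigsqcup_{J\in S}J\times\{0,1,\dots,\tau(J)-1\}$, let $\hat F:\Delta\to\Delta$ be the tower map ($\hat F(x,k)=(x,k+1)$ if $k+1<\tau(x)$, and $\hat F(x,\tau(x)-1)=(Fx,0)$), and let $\pi:\Delta\to X$, $\pi(x,k)=f^k(x)$, so that $\pi\circ\hat F=f\circ\pi$. A standard skyscraper computation gives a bijection between the measures $\nu\in\mathcal M(F,W)$ with $Q_\nu<\infty$ and the $\hat F$-invariant probability measures $\hat\nu$ on $\Delta$, namely $\hat\nu=Q_\nu^{-1}\sum_{k\ge0}\nu|_{\{\tau>k\}}\times\delta_k$; under it $\pi_*\hat\nu=\mathcal L(\nu)$ and $Q_\nu=\hat\nu(\{k=0\})^{-1}$. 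Hence it suffices to produce an $\hat F$-invariant probability measure on $\Delta$ whose $\pi$-image is $\mu$.

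\medskip

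Since $I$ is compact, $\Delta$ embeds into the compact metric space $I\times I\times(\allinte_{\ge0}\cup\{\infty\})$ (one-point compactification of the last factor) via $(x,k)\mapsto(x,f^k(x),k)$, and $\hat F$ and $\pi$ extend continuously there, $\pi$ becoming the projection onto the middle factor. Fix $x_0\in W$ that is $\mu$-generic (i.e.\ $\tfrac1N\sum_{j<N}\delta_{f^j(x_0)}\to\mu$, and Birkhoff's theorem holds at $x_0$ for the countably many $L^1(\mu)$ functions below); note the lifted orbit is defined for all times since $F$ maps $W$ onto $W$. Set $\hat x_0=(x_0,0)$ and $\hat\mu_N:=\tfrac1N\sum_{j=0}^{N-1}\delta_{\hat F^j(\hat x_0)}$. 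Passing to a weak-$*$ accumulation point $\hat\nu$ of $(\hat\mu_N)_N$, one gets that $\hat\nu$ is $\hat F$-invariant (Krylov--Bogolyubov: $\hat F_*\hat\mu_N-\hat\mu_N\to0$) and, since $\pi_*\hat\mu_N=\tfrac1N\sum_{j<N}\delta_{f^j(x_0)}\to\mu$, that $\pi_*\hat\nu=\mu$. If, moreover, $\hat\nu(\{k=\infty\})=0$ --- so that $\hat\nu$ lives on $\Delta$ up to its boundary --- then $\hat\nu(\{k=0\})>0$, for otherwise $\hat F$-invariance gives inductively $\hat\nu(\{k=n\})=0$ for every $n$, contradicting $\hat\nu(\Delta)=1$; hence $\hat\nu$ corresponds under the bijection above to an $F$-invariant probability $\nu:=i(\mu)$ on $W$ with $Q_\nu=\hat\nu(\{k=0\})^{-1}<\infty$ and $\mathcal L(\nu)=\pi_*\hat\nu=\mu$. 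That is, $\mu$ is liftable.

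\medskip

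The main point, and the only place the hypothesis $\tau\in L^1(I,\mu|W)$ enters, is the estimate $\hat\nu(\{k=\infty\})=0$. The $\hat F$-orbit of $\hat x_0$ visits level $0$ at times $0=j_0<j_1<\cdots$ and on the block $[j_l,j_{l+1})$ passes through the levels $0,1,\dots,\tau(z_l)-1$, where $z_l=F^l(x_0)=f^{j_l}(x_0)$ and $j_{l+1}-j_l=\tau(z_l)$; thus at most $(\tau(z_l)-K)^+$ of the times in that block lie at level $\ge K$, and therefore
\[
\hat\mu_N(\{k\ge K\})\ \le\ \tfrac1N\sum_{j=0}^{N-1}\bigl(\tau(f^j(x_0))-K\bigr)^+ .
\]
By Birkhoff's theorem for $(I,f,\mu)$ applied to $(\tau-K)^+\in L^1(\mu)$ (a function supported on $W$), the right-hand side tends to $\int_W(\tau-K)^+\,d\mu$ as $N\to\infty$, and this tends to $0$ as $K\to\infty$ by dominated convergence, precisely because $\tau\in L^1(\mu|W)$. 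Since $\{k\ge K\}$ is closed in the compactification, $\hat\nu(\{k\ge K\})\le\liminf_N\hat\mu_N(\{k\ge K\})$, whence $\hat\nu(\{k=\infty\})=\lim_K\hat\nu(\{k\ge K\})=0$. (If $\tau$ were not $\mu|W$-integrable this bound would be vacuous and mass could genuinely accumulate at level $\infty$.) The remaining items --- that the first-return map of $\hat F$ to $\{k=0\}$ is $F$, and the handling of the (typically $\mu$-negligible) boundary points of the $J$'s that $\hat\nu$ may pick up in the weak-$*$ limit --- are routine. I expect this tightness step to be the only real work; everything else is soft.
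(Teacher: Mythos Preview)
The paper does not supply a proof of this statement; it is quoted from \cite{Zwe05} as a background result. I therefore comment on your argument on its own terms.

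The architecture is correct, and the tightness step is both correct and the genuine heart of the matter: your bound
\[
\hat\mu_N(\{k\ge K\})\ \le\ \frac1N\sum_{j=0}^{N-1}\bigl(\tau(f^jx_0)-K\bigr)^+
\]
together with Birkhoff and dominated convergence is exactly how the hypothesis $\tau\in L^1(\mu|W)$ rules out escape of mass to level $\infty$.

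The gap is in the Krylov--Bogolyubov step. Your assertion that $\hat F$ extends \emph{continuously} to the compactification $I\times I\times(\allinte_{\ge0}\cup\{\infty\})$ is false. At a point $(x,f^k(x),k)$ with $x$ in the common closure of two basic elements $J,J'$ with $\tau(J)=k+1<\tau(J')$, approaching through $J$ sends $\hat F$ to level $0$ while approaching through $J'$ sends it to level $k+1$; the first and third coordinates of $\hat F$ jump. Without continuity of $\hat F$ at $\hat\nu$-a.e.\ point, the passage from $\hat F_*\hat\mu_N-\hat\mu_N\to0$ to $\hat F_*\hat\nu=\hat\nu$ is illegitimate, and you have given no argument that $\hat\nu$ is null on the discontinuity locus. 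That locus projects under $\pi$ into $\bigcup_{J\in S}\bigcup_{k<\tau(J)}f^k(\partial J)$, and since the $J$'s are merely disjoint Borel sets there is no general reason for $\mu$ to annihilate this union --- so dismissing the matter as ``routine'' is unwarranted. Zweim\"uller's own proof in \cite{Zwe05} is carried out in a purely measure-theoretic (non-topological) setting, constructing the induced measure by direct Hopf--Rokhlin bookkeeping on the tower rather than via weak-$*$ compactness, precisely to avoid this obstruction. If you wish to keep the compactness route you must either prove $\hat\nu(\mathrm{disc}\,\hat F)=0$ under the standing hypotheses, or replace the topological limit by a measure-theoretic one (for instance, show directly that $\tfrac1M\sum_{l<M}\delta_{F^lx_0}$ converges setwise on $W$ to an $F$-invariant probability with integrable $\tau$).
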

Although these two theorems provide conditions, which guarantee that a given invariant measure is liftable, they are difficult to check.
Moreover, the study of equilibrium measures, which serves as our
motivation, requires the impossible task of checking these conditions
for all invariant measures. It then becomes our goal to establish
sufficient conditions on inducing schemes, which guarantee liftability of \emph{all} invariant measures with sufficiently large entropy and positive weight to the base.

For interval maps Hofbauer and Keller constructed a different
type of the inducing scheme known as the Markov extension or Hofbauer-Keller tower (see \cite{Hof2, Hof1, Kel89}). It produces a symbolic representation of the interval map via a subshift of countable type, however, the transfer matrix, defining which (symbolic) sequences are allowed, is not known \emph{a priori} and can be very complicated. In \cite{Kel89}, Keller obtained some general criteria for liftability to the Markov extension for one-dimensional maps. In this case, the liftability problem consists of proving the existence of a finite non-zero measure on the Markov extension, which is invariant under the lift of $f$ and projects (via the canonical projection $\pi$ on intervals as opposed to the operator $\mathcal{L}$ defined by \eqref{lift}) to the given $f$-measure on the interval.

In \cite{Bru95}, Bruin established liftability of absolutely
continuous invariant measures of positive entropy to inducing schemes satisfying some additional assumptions for piecewise
continuous piecewise monotone interval maps. These assumptions allow one to ``embed'' the inducing scheme into the Hofbauer-Keller tower and express the induced map as the first return time map to a certain subset (in the Hofbauer-Keller tower). Pesin and Senti \cite{PS1} showed that for a transverse family of unimodal maps at a Misiurewicz parameter there is a set of parameters of positive Lebesgue measure (this set is a subset of the Collet-Eckmann parameters) for which every measure $\mu\in\mathcal{M}(f, X)$ of positive metric entropy is liftable to the particular inducing scheme constructed by Yoccoz (see \cite{JCY, Senti3}). A similar result holds for multimodal maps.

In \cite{Buzzi99}, Buzzi constructed Markov extensions (i.e. a
version of the Hofbauer-Keller tower) for multi-dimensional piecewise
invertible maps and established liftability (in the sense of Markov
extensions) for invariant measures of large entropy. In this paper we
modify the approach by Bruin adjusting it to Markov extensions in the
sense of Buzzi and we establish liftability of measures of large entropy for general inducing schemes.


\section{Markov extensions}\label{extension}


Let $I$ be a compact metric space. A map $f:I\to I$ is said to be \emph{piecewise invertible} if there exists a collection of open disjoint subsets $P=\{A_i\subset I\}_{i=1}^s$ satisfying:
\begin{enumerate}
\item[(A1)] $\bigcup_{i=1}^s\overline{A}_i=I$;
\item[(A2)] for each $i$ there is a connected set $U_i$ and a  homeomorphism $f_i: U_i \to f_i(U_i)$ for which $\overline{A_i}\subset U_i$ and $f_i|A_i=f|A_i$.
\end{enumerate}
Set $\partial_0 P:=\partial P$,
$$
\partial_nP:=\bigcup_{k=0}^{n-1}\,f^{-k}(\partial P), \quad n\ge 1
$$
and for $x\notin\partial P$ denote by $P(x)$ the element of $P$ containing $x$. Further, for $x\notin\partial_n P$ we denote by $P_n(x)$ the element of $P\vee f^{-1}P \vee\cdots \vee f^{-n+1}P$ containing $x$.

We introduce the connected Markov extension of the map $f$. Our approach differs slightly from the approach by Buzzi \cite{Buzzi99}, but the results we use (Theorem A and Proposition 2.2) still hold, see Remark 1.10 in \cite{Buzzi99} (our construction of the Markov extension is also different from the classical construction of the Hofbauer-Keller tower for one-dimensional maps). Set $\mathcal{D}_0=\{I\}$ and for $n>0$ define $\mathcal{D}_n$ to be the collection of sets
$$
\{ f(E): \, E \text{ is a connected component of } A\cap B, \, A\in \mathcal{D}_{n-1}, \, B\in P\}.
$$
We then set
$$
\mathcal{D}:=\bigcup_{n\ge 0}\mathcal{D}_n.
$$
The \emph{connected Markov extension} of $f$ is the pair
$(\check{I},\check{f})$ where
$$
\check{I}:=\{(x,D)\in I\times\mathcal{D}:\, x\in\bar{D}\}
$$
is the tower and
$\check{f}:\check{I}\setminus\pi^{-1}(\partial P)\to\check{I}$ is the map given by
$$
\check{f}(x,D)=(f(x),f(E)).
$$
Here $E$ is the \emph{connected} component of $D\cap P(x)$
containing $x$ and $\pi:\check{I}\to I$ is the canonical projection, i.e., $\pi(x,\mathcal{D})=x$. The set $\check{I}$ is a (non-compact) metric space. We refer to sets of the type
$$
\check{D}:=\{(x, D): \, x \in\bar{D}\} \text{ with } D\in\mathcal{D}
$$
as \emph{elements} of the Markov extension and we set
$$
\check{\mathcal{D}}:=\bigcup_{D\in \mathcal{D}}\check{D}.
$$
Let $\text{inc}: I\setminus{\partial P}\to\check{I}$ be the
inclusion into the zero level of the Markov extension, i.e.,
$\text{inc}(x)=(x,I)$. For any $D\in\mathcal{D}$, we define the
\emph{level of $D$} as $\ell(D)=\min\{n\in\mathbb{N}\colon
D\in\mathcal{D}_n\}$ and by extension, we define the \emph{level of
$\check{D}$} as $\ell(\check{D})=\ell(D)$.

Note that the projection $\pi:\check{I}\to I$ is countable to one on
$\check{I}$, but it is injective on each $\check{D}\in\check{\mathcal{D}}$.

The Markov extension has the following properties (see \cite{Buzzi99}):
\begin{enumerate}
\item[$\circ$] it is an extension of the system $(I,f)$, i.e.,
$$
\pi\circ \check{f}|\check{I}\setminus\pi^{-1}(\partial P)
=f\circ\pi|\check{I}\setminus\pi^{-1}(\partial P);
$$
\item[$\circ$] $\check{\mathcal{D}}$ is a Markov partition for $(\check{I},\check{f})$ in the sense that for any $k\in\inte$ and any
$\check{D}_a,\check{D}_b\in\check{\mathcal{D}}$, we have that
$\check{f}^k(\check{D}_a)\cap \check{D}_b\ne \emptyset$ if and only
if $\check{f}^k(\check{D}_a)\supseteq \check{D}_b$;
\item[$\circ$] for any $\check{D}\in\check{\mathcal{D}}$ of level $n$, there exists a subset $E\subset A_i$ for some $A_i\in P$ such that $\check{f}^n$ maps $\text{inc}\,(E)$ homeomorphically onto $\check{D}$.
\end{enumerate}
We denote by $(\mathcal{I},f_e)$ and $(\check{\mathcal{I}},\check{f}_e)$ the \emph{natural extensions} of $f$ and $\check{f}$ respectively. Recall that the natural extension $(\mathcal{I},f_e)$ of a map $f:I\to I$ is the space of all sequences $\{x_n\}_{n\in\allinte}$, satisfying $f(x_{n})=x_{n+1}$ (i.e., orbits of $f$), along with the map $f_e$, which is the left shift. There is a natural projection $p(\{x_n\})=x_0$ from the natural extension to the original space. If $f$ preserves a measure $\mu$, there is a unique $f_e$-invariant measure $\mu_e$ on the natural extension, which projects to $\mu$. If $\mu$ is ergodic then so is $\mu_e$ and $h_{\mu_e}(f_e)=h_\mu(f)$. The definition of the natural extension $(\check{\mathcal{I}}, \check{f}_e)$ is similar.

We denote by $p:\mathcal{I}\to I$ and
$\check{p}: \check{\mathcal{I}}\to\check{I}$ the natural projections and by $\pi_e: \check{\mathcal{I}}\to\mathcal{I}$ the extension of the projection $\pi$ to the natural extensions. We have the following commutative diagram
\begin{equation}\label{diagram}
\begin{CD}
  (\check{\mathcal{I}},\check{f}_e) @>{\pi_e}>> (\mathcal{I},f_e) \\
  @VV\check{p}V                              @VVpV \\
  (\check{I},\check{f})  @>{\pi}>>  (I,f)
\end{CD}
\end{equation}
Define the $\check{f}_e$-invariant set $\check{\mathcal{I}}' \subseteq \check{\mathcal{I}}$ as
$$
\begin{aligned}
\check{\mathcal{I}}':=\{\{\check{x}_n\}_{n\in\mathbb{Z}}\in\check{\mathcal{I}}:& \text{ there exists } N\ge 0 \text{ such that } \\
&\check{x}_0
=\check{f}^n(\text{inc}\,(\pi(\check{x}_{-n}))) \text{ for all }n\ge N\}
\end{aligned}
$$
and set $\mathcal{I}'=\pi_e(\check{\mathcal{I}}')$. It is shown in \cite{Buzzi99} that $\pi_e:\check{\mathcal{I}}' \to \mathcal{I}' $ is
one-to-one and bi-measurable. Let $\Delta P=f(\partial P)$.
\begin{prop}[see \cite{Buzzi99}, Theorem A, Proposition 2.2]\label{buzzi}
Assume that the map $f$ satisfies the following conditions:
\begin{enumerate}
\item[(P1)] $h_{top}(\Delta P,f)<h_{top}(f)$;
\item[(P2)] there exist a measurable subset $I_0\subset I$ and
a number $0\le H<h_{top}(f)$ such that for any ergodic measure
$\mu\in\mathcal{M}(f,I)$ with $h_\mu(f)> H$, we have
$\mu(I\setminus I_0)=0$ and $\text{diam}\,(P_n(x))\to 0$ as $n\to\infty$ for $\mu$-almost every $x\in I_0$.
\end{enumerate}
Then for any $\mu_e\in \mathcal{M}(f_e,\mathcal{I})$  with
$$
h_{\mu_e}(f_e)>\max\{H, h_{top}(\Delta P,f)\},
$$
we have $\mu_e(\mathcal{I}')=1$. The same statement holds under the same conditions for any
$\check{\mu}_e \in \mathcal{M}(\check{f}_e,\check{\mathcal{I}})$.
\end{prop}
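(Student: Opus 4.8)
The statement is Theorem~A and Proposition~2.2 of \cite{Buzzi99}, so the plan is to recall the structure of Buzzi's argument and to check that it survives the slight modification we have made in the construction of the Markov extension (this is the content of Remark~1.10 in \cite{Buzzi99}).

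The first step is a purely dynamical description of $\mathcal I\setminus\mathcal I'$. Given an orbit $\underline y=\{y_n\}_{n\in\allinte}\in\mathcal I$ whose $f$-trajectory never meets $\partial P$, I would attach to it, for each $n\ge0$, the element $D^{(n)}(\underline y)\in\mathcal D$ which is the $\mathcal D$-component of $\check{f}^{\,n}(\text{inc}(y_{-n}))\in\check I$; unravelling the definitions of $\check f$ and $\mathcal D$, this is the $f^n$-image of the connected component of the cylinder $P_n(y_{-n})$ containing $y_{-n}$, and one checks from the construction that $D^{(n+1)}(\underline y)\subseteq D^{(n)}(\underline y)$, so $\{D^{(n)}(\underline y)\}_{n\ge0}$ is non-increasing inside the countable family $\mathcal D$. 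If it stabilizes, its limit $D_\ast$ together with $\underline y$ determines, coordinate by coordinate, a point $\check z\in\check{\mathcal I}'$ with $\pi_e(\check z)=\underline y$; conversely, any $\check z\in\check{\mathcal I}'$ with $\pi_e(\check z)=\underline y$ forces $\{D^{(n)}(\underline y)\}$ to stabilize. Hence $\underline y\in\mathcal I'$ precisely when $\{D^{(n)}(\underline y)\}$ is eventually constant, so --- modulo orbits meeting $\partial P$, which are $\mu_e$-null as soon as $h_{\mu_e}(f_e)>0$ --- $\mathcal I\setminus\mathcal I'$ is the $f_e$-invariant Borel set of orbits along which $D^{(n)}$ decreases strictly infinitely often.

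Next I would localize the drops. A strict inclusion $D^{(n+1)}(\underline y)\subsetneq D^{(n)}(\underline y)$ can occur only if either (a) the connected component of $P_n(y_{-n})$ around $y_{-n}$ is genuinely cut when pulled back one more step --- the cutting frontier being the image of a piece of $\partial P$ and hence lying in $\Delta P=f(\partial P)$, so that the orbit visits at time $-n$ an $n$-cylinder component meeting $\Delta P$ --- or (b) the cylinders fail to localize, i.e.\ $\text{diam}\,P_n(y_{-n})\not\to0$. This gives $\mathcal I\setminus\mathcal I'\subseteq A\cup B$, where along $B$ mechanism (b) recurs with positive upper density (equivalently, $P$ is not a generator along the orbit) and along $A$, with $P$ localizing, mechanism (a) recurs with positive upper density. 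On $B$, Condition (P2) applies directly: any ergodic $\mu\in\mathcal M(f,I)$ with $h_\mu(f)>H$ has $\mu(I\setminus I_0)=0$ and $\text{diam}\,P_n(x)\to0$ for $\mu$-a.e.\ $x$, so its natural extension $\mu_e$ (of the same entropy) gives zero mass to orbits along which localization fails with positive density, whence $\mu_e(B)=0$ once $h_{\mu_e}(f_e)>H$. On $A$, recurrence of (a) with positive density pins $\mu_e$-a.e.\ orbit to one shadowing $\Delta P$ along a positive-density set of times, and a counting argument then forces $h_{\mu_e}(f_e)\le h_{top}(\Delta P,f)$, so $\mu_e(A)=0$ once $h_{\mu_e}(f_e)>h_{top}(\Delta P,f)$. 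Combining, any ergodic $\mu_e$ with $h_{\mu_e}(f_e)>\max\{H,h_{top}(\Delta P,f)\}$ has $\mu_e(\mathcal I\setminus\mathcal I')\le\mu_e(A)+\mu_e(B)=0$; the general case then follows by ergodic decomposition since $\mathcal I'$ is $f_e$-invariant, and the $\check\mu_e$ statement is the identical argument carried out in $(\check{\mathcal I},\check{f}_e)$, using that $\check p$ and $\pi_e$ are equivariant.

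It remains to note that the modified construction of the Markov extension only replaces $\mathcal D$ by a comparable family and preserves the Markov property of $\check{\mathcal D}$ together with the three displayed properties of $(\check I,\check f)$; since the fine combinatorics of $\mathcal D$ is used only in the first step, that step carries over verbatim (cf.\ Remark~1.10 in \cite{Buzzi99}). The hard part is the entropy estimate on $A$: turning ``recurrence to arbitrarily fine neighbourhoods of $\Delta P$ along a positive-density set of times'' into the bound $h_{\mu_e}(f_e)\le h_{top}(\Delta P,f)$ requires the counting argument that is essentially Theorem~A of \cite{Buzzi99}, and it is also there that the interplay between Conditions (P1) and (P2) is genuinely needed.
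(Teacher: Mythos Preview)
The paper does not actually prove this proposition: it is stated with the attribution ``see \cite{Buzzi99}, Theorem~A, Proposition~2.2'' and no proof is given, the authors relying entirely on Buzzi together with Remark~1.10 there to cover their modified construction. Your sketch therefore goes well beyond what the paper itself does; it is an outline of Buzzi's own argument rather than something to be compared against a proof in the paper.

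As an outline of Buzzi's proof your sketch is broadly faithful: the identification of $\mathcal I'$ with the set of orbits along which the nested sequence $D^{(n)}(\underline y)$ stabilizes is correct, and the split into a ``boundary'' part controlled by $h_{top}(\Delta P,f)$ and a ``non-generating'' part controlled by (P2) is indeed the architecture of the argument. Two points deserve care. First, your dichotomy (a)/(b) is slightly informal: in Buzzi's paper the drops are tracked through the notion of \emph{irreducible} cylinders, and the link between infinitely many irreducible times and shadowing of $\Delta P$ is made precise via a combinatorial lemma rather than the soft ``positive upper density'' phrasing you use. Second, as you yourself flag, the genuine content is the entropy bound on your set $A$, and your sketch does not attempt it; that is fine for a sketch, but be aware that this step is where most of the work in \cite{Buzzi99} lies and is not a routine counting argument. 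With those caveats, your proposal is consistent with the source the paper cites.
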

As a manifestation of this proposition one has that any measure with sufficiently large entropy can be lifted to the connected Markov extension. More precisely, the following statement holds.
\begin{prop}\label{buzzi2}
Assume that the map $f$ satisfies Conditions (P1) and (P2). Then for any ergodic invariant Borel probability measure $\mu$ with $h_{\mu}(f)>\max\{H, h_{top}(\Delta P,f)\}$
\begin{enumerate}
\item there exists an $\check{f}$-invariant ergodic Borel probability measure $\check{\mu}$ on the connected Markov extension
$\check{I}$ with $\pi_*\check{\mu}=\mu$;
\item there exists an  $\check{f}_e$-invariant ergodic Borel
probability measure $\check{\mu}_e$ on $\check{\mathcal{I}}$ with
$\check{p}_*\check{\mu}_e=\check{\mu}$ and
$\check{\mu}_e(\check{\mathcal{I}}')=1$.
\end{enumerate}
\end{prop}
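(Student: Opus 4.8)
The plan is to transport the given measure upward and across the commutative diagram~\eqref{diagram}: first lift $\mu$ to the natural extension $(\mathcal I, f_e)$, then invoke Proposition~\ref{buzzi} to conclude that this lift gives full weight to $\mathcal I'$, then carry it over the bi-measurable bijection $\pi_e\colon\check{\mathcal I}'\to\mathcal I'$ to produce a measure $\check\mu_e$ on $\check{\mathcal I}$, and finally push it down by $\check p$ to obtain $\check\mu$ on $\check I$, reading off the required identities from~\eqref{diagram}.

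The first step uses standard properties of natural extensions recalled in Section~\ref{extension}. Since $\mu$ is an ergodic $f$-invariant Borel probability measure, there is a unique $f_e$-invariant Borel probability measure $\mu_e\in\mathcal M(f_e,\mathcal I)$ with $p_*\mu_e=\mu$; moreover $\mu_e$ is ergodic and $h_{\mu_e}(f_e)=h_\mu(f)$. By hypothesis $h_\mu(f)>\max\{H,h_{top}(\Delta P,f)\}$, so $\mu_e$ meets the entropy assumption of Proposition~\ref{buzzi}, and hence $\mu_e(\mathcal I')=1$.

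Next I would build the two extensions of $\mu$. Recall from the text preceding Proposition~\ref{buzzi} that $\check{\mathcal I}'$ is a $\check f_e$-invariant measurable subset of $\check{\mathcal I}$, that $\mathcal I'=\pi_e(\check{\mathcal I}')$, and that $\pi_e$ restricts to a one-to-one bi-measurable map $\check{\mathcal I}'\to\mathcal I'$; since $\pi_e\circ\check f_e=f_e\circ\pi_e$ wherever defined, this restriction is a bi-measurable conjugacy between $(\check{\mathcal I}',\check f_e)$ and $(\mathcal I',f_e)$. Define $\check\mu_e$ to be the pushforward of $\mu_e|_{\mathcal I'}$ under $(\pi_e|_{\check{\mathcal I}'})^{-1}$, extended by zero to $\check{\mathcal I}$. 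Then $\check\mu_e$ is a Borel probability measure on $\check{\mathcal I}$ with $\check\mu_e(\check{\mathcal I}')=1$, it is $\check f_e$-invariant (by the conjugacy and the invariance of $\check{\mathcal I}'$), it is ergodic (a measurable isomorphism preserves ergodicity), and $\pi_{e*}\check\mu_e=\mu_e$, the two measures agreeing on the $\mu_e$-full set $\mathcal I'$. Now set $\check\mu:=\check p_*\check\mu_e$. It is a Borel probability measure on $\check I$; it is $\check f$-invariant because $\check p\circ\check f_e=\check f\circ\check p$ off $\check p^{-1}(\pi^{-1}(\partial P))$, a set whose preimage in $\check{\mathcal I}$ carries no $\check\mu_e$-mass since the dynamics on the natural extension is everywhere defined; and it is ergodic as a factor of the ergodic measure $\check\mu_e$. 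The identity $\check p_*\check\mu_e=\check\mu$ holds by definition, which is item (2), and commutativity of~\eqref{diagram} gives $\pi_*\check\mu=\pi_*\check p_*\check\mu_e=p_*\pi_{e*}\check\mu_e=p_*\mu_e=\mu$, which is item (1).

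I do not expect a serious obstacle here: the one point requiring care is the bookkeeping around the partially defined maps $f$ and $\check f$ and the measurability of $\check{\mathcal I}'$, which is needed to make the pushforward by $(\pi_e|_{\check{\mathcal I}'})^{-1}$ legitimate. The essential input — that $\pi_e$ is a bi-measurable bijection on $\check{\mathcal I}'$ intertwining $\check f_e$ and $f_e$, together with $\mu_e(\mathcal I')=1$ — is exactly what Proposition~\ref{buzzi} and the discussion of $\check{\mathcal I}'$ from \cite{Buzzi99} supply; everything else is the routine transfer of invariance and ergodicity through a measurable isomorphism and through a factor map.
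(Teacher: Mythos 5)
Your proposal is correct and follows exactly the paper's own argument: lift $\mu$ to the unique ergodic $\mu_e$ on the natural extension, apply Proposition~\ref{buzzi} to get $\mu_e(\mathcal I')=1$, transport across the bi-measurable bijection $\pi_e\colon\check{\mathcal I}'\to\mathcal I'$ to define $\check\mu_e:=(\pi_e^{-1})_*\mu_e|_{\mathcal I'}$, and push down by $\check p$ using commutativity of the diagram. The extra bookkeeping you supply on invariance and ergodicity is consistent with (and slightly more detailed than) what the paper writes.
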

\begin{proof}
Let $\mu$ be an $f$-invariant ergodic Borel probability measure with
$h_{\mu}(f)>\max\{T, h_{top}(\Delta P,f)\}$ and $\mu_e$ the unique
lift of $\mu$ to the natural extension $\mathcal{I}$. By \cite[Theorem A]{Buzzi99}, $\pi_e$ is a measurable isomorphism between
$\check{\mathcal{I}}'$ and $\mathcal{I}'$ and thus, the measure
$\check{\mu}_e := (\pi_e^{-1})_* \mu_e|\mathcal{I}'$ is well
defined. Furthermore, by Proposition~\ref{buzzi}, $\mu_e(\mathcal{I}')=1=\check{\mu}_e(\check{\mathcal{I}'})$. Set
$\check{\mu}=\check{p}_*\check{\mu}_e$. Since the diagram
\eqref{diagram} commutes, we have that $\mu=\pi_*\check{\mu}$.
\end{proof}


\section{Relations between Markov extensions and inducing schemes}\label{relations}


Let $f:I\to I$ be a piecewise invertible map of a compact metric space $I$. Our study of the liftability problem follows some ideas of Bruin
\cite{Bru95}, which are to express the induced map as the first return time map to a certain subset of the connected Markov extension. This requires some stronger restrictions on the inducing schemes:
\begin{enumerate}
\item[(M)] \emph{minimality}: For any $J\in S$ there is a connected open set $U_J\supset J$ such that $f^{\tau(J)}|U_J$ is a homeomorphism onto its image with $f^{\tau(J)}(J)= W$ (see Condition (H1)); in addition, the inducing time is \emph{minimal} in the following sense: for any $L\subset I$, $m\in\inte$ and any connected open set $U_L\supset L$ such that $f^m|U_{L}$ is a homeomorphism with $f^m(L)=W$, we have that if $L\cap J\neq\emptyset$ for some $J\in S$ then $m\ge\tau(J)$;
\end{enumerate}
In the case of one-dimensional maps one often needs bounds on the distortion of the derivative of the induced map $F$. Such bounds can be obtained using Koebe's lemma, which applies under a somewhat different assumption than Condition (M):
\begin{enumerate}
\item[(M$^+$)] \emph{minimal extendibility}:
There is a connected open neighborhood $W^+$ of $\overline W$ and for each $J\in S$ there exists a connected open neighborhood $J^+$ of $\overline J$ such that $f^{\tau(J)}|J^+$ is a homeomorphism onto its image, $f^{\tau(J)}(J^+)= W^+$ and $f^{\tau(J)}(J)=W$; in addition, the inducing time is \emph{minimal extendible}, in the following sense: for any $L\subset I$, $m\in\inte$ and any connected open neighborhood $L^+\supset L$ such that $f^m$ is a homeomorphism of $L$ onto $W$ and of $L^+$ onto $W^+$ we have that if $L\cap J\ne\emptyset$ for some $J\in S$ then $m\ge\tau(J)$.
\end{enumerate}

Conditions (M) and (M$^+$) express a kind of "minimality" of the inducing time. In particular, a refinement of a minimal (or minimal
extendible) inducing scheme fails to be minimal but the liftability property may still hold. In fact, it does hold for any finite refinement of the inducing scheme (as the proofs below can be easily modified to work for finite refinements).\footnote{Given an inducing scheme $\{S,\tau\}$, we call an inducing scheme $\{S',\tau'\}$ a \emph{refinement} if for any $J'\in S'$ there exist $J\in S$ and a number $n(J')$ such that $J'\subset J$, $\tau(J')=\tau(J)+n(J')$ and $f^{n(J')}(J')=J$; a refinement $\{S',\tau'\}$ is called \emph{finite} if there exists $N> 0$ such that $n(J')\le N$ for all $J'\in S'$.}

Let us stress that neither of the Conditions (M) or (M$^+$) is necessary for liftability as illustrated by the following example. Consider the map $f(x)=2x\pmod 1$ and the inducing scheme given by $J_n=(\frac{1}{2^{n+1}},\frac{1}{2^{n}})$, $n\ge 0$ and
$\tau(J_n)=n+1$. It is easy to see that the inducing scheme
$\{\{J_n\}, \tau\}$ does not satisfy either of the Conditions (M) or
(M$^+$), though it can be shown that every measure $\mu$
with $\mu(W)>0$ is liftable.

Note that for any $J\in S$ and $0\le i\le \tau(J)$ the map $f^i|J$ is a homeomorphism and hence it must be contained in some $A_i\in P$.

For each $J\in S$ define the map
$\check{F}|\pi^{-1}(J)=\check{f}^{\tau(J)}|\pi^{-1}(J)$. Observe that 
$\check{F}(\pi^{-1}(J))\subset\pi^{-1}(W)$ and hence the set
\begin{equation}\label{set_check_w}
\check{W}:=\bigcup_{k\ge 0}\check{F}^k(\text{inc}\,(W)).
\end{equation}
is well defined. Further, assuming that $f$ satisfies Condition (M$^+$) for each $J\in S$ and the corresponding set $J^+$ define
$\check{F}|\pi^{-1}(J^+)=\check{f}^{\tau(J)}|\pi^{-1}(J^+)$ and then set
\begin{equation*}
\check{W}^{+}:=\bigcup_{k\ge 0}\check{F}^k(\text{inc}\,(W^{+})),
\end{equation*}
where
$$
\check{F}^k(\text{inc}(W^+))
=9\bigcup_{J\in S}\check{F}(\pi^{-1}(J^+)\cap\check{F}^{k-1} (\text{inc}(W^+))).
$$
We introduce the following conditions on the inducing scheme, which express its ``compatibility'' with the connected Markov extension:
\begin{enumerate}
\item[(C)] For any $J\in S$ the connected open set $U_J$ in Condition (M) satisfies $f^i(U_J)\cap\partial P=\emptyset$ for all $0\le i <\tau(J)$;
\item[(C$^+$)] For any $J\in S$ the connected open set $J^+$ in Condition (M$^+$) satisfies $f^i(J^+)\cap{\partial P}=\emptyset$  for all $0\le i<\tau(J)$.
\end{enumerate}

\begin{thm}\label{firstreturn} Assume that the inducing scheme satisfies Conditions (M) and (C). Then the map $\check{F}:\check{W}\to\check{W}$ is the first return time map of $(\check{I},\check{f})$ to $\check{W}$. More precisely, for any $\check{x}\in\check{W}\cap \pi^{-1}(J)$ with some $J\in S$ we have that $\check{f}^i(\check{x})\notin\check{W}$ for $0<i<\tau(J)$ and $\check{f}^{\tau(J)}(\check{x})\in\check{W}$. In particular, the first return time of a point $\check{x}\in\check{W}$ depends only on $\pi(\check{x})$ and does not depend on the level $\check{D}\in\check{\mathcal{D}}$ containing $\check{x}$. Similarly, if the inducing scheme satisfies Conditions (M$^+$) and (C$^+$), then the map $\check{F}:\check{W^+}\to\check{W^+}$ is the first return time map to $\check{W}^+$. 
\end{thm}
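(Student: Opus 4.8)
The statement breaks into two claims:
\emph{(i)} $\check f^{\tau(J)}(\check x)\in\check W$ whenever $\check x\in\check W\cap\pi^{-1}(J)$, and
\emph{(ii)} $\check f^{i}(\check x)\notin\check W$ for $0<i<\tau(J)$.
Granting both, the first return time of a point $\check x\in\check W$ to $\check W$ under $\check f$ equals $\tau(J)$ with $J\ni\pi(\check x)$, so it depends only on $\pi(\check x)$ and not on the element $\check D\in\check{\mathcal D}$ containing $\check x$, and the first return map is $\check F$; the stated ``In particular'' follows. The plan is to get \emph{(i)} by unwinding definitions and \emph{(ii)} by contradiction with the minimality Condition (M). The case of Conditions (M$^+$) and (C$^+$) is handled by the same argument, reading $W^+$, $J^+$, $\check W^+$, (M$^+$) in place of $W$, $U_J$, $\check W$, (M), so I describe only the (M)/(C) case. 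For \emph{(i)}: by construction $\check W=\bigcup_{k\ge0}\check F^{k}(\text{inc}(W))$ is forward invariant under $\check F$, so if $\check x=\check F^{k}(\text{inc}(w))$ then $\check f^{\tau(J)}(\check x)=\check F(\check x)=\check F^{k+1}(\text{inc}(w))\in\check W$; the only point to check is that $\check F(\check x)=\check f^{\tau(J)}(\check x)$ is defined, and this is immediate from Condition (C): $\pi(\check x)\in J\subset U_J$ and $f^{\ell}(U_J)\cap\partial P=\emptyset$ for $0\le\ell<\tau(J)$, so the first $\tau(J)$ iterates of $\check f$ avoid $\pi^{-1}(\partial P)$.

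The work is in \emph{(ii)}, and I would begin with a description of $\check W$:
\[
(\star)\qquad \text{every point of }\check W\text{ has the form }(y,D)\text{ with }y\in W\text{ and }D\supseteq W.
\]
This I would prove by induction on the number of applications of $\check F$, the base case $\text{inc}(W)$ being trivial since $D=I$. For the inductive step take $(y,D)$ with $y$ in a basic element $K$ and $D\supseteq W$, and trace $\check f$ through the $\tau(K)$ steps of $\check F$: at time $\ell$ one is at $(f^{\ell}(y),D^{(\ell)})$ with $D^{(0)}=D$ and $D^{(\ell+1)}=f(E^{(\ell)})$, where $E^{(\ell)}$ is the connected component of $D^{(\ell)}\cap P(f^{\ell}(y))$ containing $f^{\ell}(y)$. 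By Condition (C) the connected set $f^{\ell}(U_K)$ is disjoint from $\partial P$, hence lies in a single element of $P$, which is then $P(f^{\ell}(y))$; since the basic element $K$ is connected and $K\subseteq W\subseteq D$, one gets inductively $f^{\ell}(K)\subseteq E^{(\ell)}$ and so $D^{(\ell+1)}\supseteq f^{\ell+1}(K)$, whence $D^{(\tau(K))}\supseteq f^{\tau(K)}(K)=W$ — i.e. $(\star)$ for the image point.

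Now for the contradiction. Suppose $\check x=(x,D)\in\check W\cap\pi^{-1}(J)$, so $x\in J$, and suppose $\check f^{i}(\check x)\in\check W$ for some $0<i<\tau(J)$; write $\check f^{i}(\check x)=(f^{i}(x),D_i)$, so $D_i\supseteq W$ by $(\star)$. Tracing $\check f^{i}$ from $\check x$ exactly as above, Condition (C) guarantees that each $f^{\ell}(x)$ stays in one element $A_{c_\ell}\in P$ for $0\le\ell<i$, that the connected components $E^{(\ell)}$ lie in $A_{c_\ell}$, and that $D^{(i)}=D_i$. Since each $A_{c_\ell}$ lies in the domain $U_{c_\ell}$ of the homeomorphic extension of $f|A_{c_\ell}$, the restrictions $f|E^{(\ell)}$ are homeomorphisms onto $D^{(\ell+1)}$. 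Pulling $W$ back along them, I set $\hat E^{(i-1)}:=E^{(i-1)}$ and $\hat E^{(\ell)}:=(f|E^{(\ell)})^{-1}(\hat E^{(\ell+1)})$ for $\ell=i-2,\dots,0$ (legitimate since $\hat E^{(\ell+1)}\subseteq E^{(\ell+1)}\subseteq D^{(\ell+1)}=f(E^{(\ell)})$); these are connected open sets with $f^{\ell}(x)\in\hat E^{(\ell)}\subseteq A_{c_\ell}$ and $f(\hat E^{(\ell)})=\hat E^{(\ell+1)}$. Then $U_L:=\hat E^{(0)}$ is a connected open set containing $x$, $f^{i}|U_L$ is a homeomorphism onto $f(E^{(i-1)})=D_i\supseteq W$, and $L:=(f^{i}|U_L)^{-1}(W)$ satisfies $f^{i}(L)=W$ with $x\in L\cap J$. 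By Condition (M) this forces $i\ge\tau(J)$, contradicting $0<i<\tau(J)$; hence $\check f^{i}(\check x)\notin\check W$ for $0<i<\tau(J)$, and together with \emph{(i)} this proves the theorem.

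The hard part is precisely the passage, in \emph{(ii)}, from ``$\check f^{i}(\check x)\in\check W$'' to an honest competitor homeomorphism witnessing a shorter admissible inducing domain: one must keep track that the connected components $E^{(\ell)}$ produced by the Markov construction stay inside single elements of $P$ — which is exactly what Condition (C) buys us — and use that the elements of $\check{\mathcal D}$ are connected \emph{open} sets, so that $f^{i}$ genuinely has a homeomorphic inverse branch along the orbit of $x$ carrying the large set $D_i\supseteq W$ back to a neighborhood of $x$. Some additional care is needed wherever $W$ or the basic elements may fail to be connected, which is why one should argue throughout with connected components of the relevant sets rather than with $W$ or $U_J$ directly; the connectivity built into the Markov extension is what makes this work.
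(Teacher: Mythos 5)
Your proof is correct and follows essentially the same route as the paper's: your claim $(\star)$ is the paper's Lemma~\ref{abc} (any element of $\check{\mathcal{D}}$ meeting $\check{W}$ projects onto a superset of $W$), proved by the same induction along iterates of $\check{F}$, and your contradiction step is the paper's, except that you build the inverse branch of $f^i$ by hand through the components $E^{(\ell)}$ where the paper invokes the Markov property of $\check{\mathcal{D}}$. The one wrinkle --- your inductive step momentarily assumes the basic element $K$ is connected, which the hypotheses do not grant --- is shared by the paper's own argument (which traces the connected sets $U_J$, resp.\ $J^{+}$, for exactly this reason), and you flag the needed fix yourself.
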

\begin{proof}
We need the following statement.
\begin{lem}\label{abc} Let $\{S, \tau\}$ be an inducing scheme satisfying Condition (C$^+$). For any $\check{D}\in\check{\mathcal{D}}$ such that $\check{D}\cap\check{W}\ne\emptyset$ we have that
$$
\pi(\check{D})\supset W^+.
$$
If the inducing scheme satisfies Condition (C), then $\pi(\check{D})\supset W$.
\end{lem}
\begin{proof}[Proof of the lemma] The result clearly holds for any $\check{D}\cap\text{inc}(W^+)\ne\emptyset$ since the zero level of the tower is just the whole space. We now prove the Lemma by induction on the level. Assume that $\pi(\check{D}_n)\supset W^+$ for any $\check{D}_n\cap\check{F}^n(\text{inc}(W^+))\ne\emptyset$. We shall show that $\pi(\check{D})\supset W^+$ for any 
$\check{D}\cap \check{F}^{n+1}(\text{inc}(W^+))\ne\emptyset$. Since  $$
\check{F}^{n+1}(\text{inc}(W^+))=\bigcup_{J\in S}\check{F}(\pi^{-1}(J)\cap\check{F}^{n}(\text{inc}(W^+)),
$$
there exists $\check{D}_n\cap\check{F}^n(\text{inc}(W^+))\ne\emptyset$ and 
$J\in S$ such that 
$\check{D}\cap\check{F}(\pi^{-1}(J)\cap \check{D}_n)\ne\emptyset$. By the inductive assumption, 
$\pi(\check{D}_n)\supset W^+\supset J^+$, in other words, $\check{D}_n$ contains a complete copy of $J^+$. Condition (C$^+$) implies that the map $\check{f}^i|\pi^{-1}(J^+)\cap\check{D}_n$ is a homeomorphism for all $1\le i\le\tau(J)$. Since $J^+$ is a connected open set, so are the sets $\check{f}^i(\pi^{-1}(J^+)\cap\check{D}_n)$ for $1\le i\le\tau(J)$. It follows that  
$\check{D}\supset\check{F}(\pi^{-1}(J^+)\cap\check{D}_n)$. This implies what we need since 
$\pi(\check{F}(\pi^{-1}(J^+)\cap\check{D}_n))\supset W^+$. Adapting the proof to the case when Condition (C) is satisfied is straightforward.
\end{proof}
Proceeding with the proof of the theorem consider first an inducing scheme satisfying Conditions (M$^+$) and (C$^+$). Assume by contradiction  that there exist $\check{x}\in\check{W}\cap \pi^{-1}(J)\cap\check{D}_a$ and $0<i<\tau(J)$ such that $\check{f}^i(\check{x})\in\check{W}\cap\check{D}_b$. It follows from the lemma that both $\pi(\check{D}_a)\supset W^+$ and $\pi(\check{D}_b)\supset W^+$. As $i<\tau(J)$, by Condition (C$^+$), the map $\check{f}^i|(\pi^{-1}(J^+)\cap\check{D}_a$) is a homeomorphism and we have that $\check{f}^i(\pi^{-1}(J^+)\cap\check{D}_a)\subset\check{D}_b$. By the Markov property of the Markov extension, $\check{f}^i(\check{D}_a)\supset\check{D}_b$. Take $\check{L}^+$ to be the unique homeomorphic pre-image of $\pi^{-1}(W^+)\cap\check{D}_b$ under $\check{f}^i$ that contains $\check{x}$ and set $L^+=\pi(\check{L}^+)$. We have that  $f^i(L^+)=W^+$ and $f^i|L^+$ is a homeomorphism. By Condition (M$^+$), we conclude that $i\ge\tau(J)$ and we come to a contradiction.

Consider now the case of an inducing scheme satisfying Conditions (M) and (C). Repeating the argument in the proof of Lemma \ref{abc} (and replacing $J^+$ with $U_J$), one can show that
$\pi(\check{D})\supset W$ for any $\check{D}\in \check{\mathcal{D}}$ such that $\check{D}\cap \check{W}\ne\emptyset$. Using this fact, one can then prove that $\check{f}^i(\check{D}_a)\supset \check{D}_b$. To conclude the proof take $\check{L}$ to be the unique homeomorphic pre-image of $\pi^{-1}(W)\cap\check{D}_b$ under $\check{f}^i$ that contains $\check{x}$ and repeat the above argument.
\end{proof}
\begin{thm}\label{basesweeps}
Assume that the map $f$ satisfies Conditions (P1) and (P2) of
Proposition~\ref{buzzi}. Let $\mu$ be an $f$-invariant ergodic Borel
probability measure on $I$ with
$h_{\mu}(f)>\max\{T, h_{top}(\Delta P,f)\}$ and $\check{\mu}$ its lift to the connected Markov extension. Let also $E\subset X$ be such that
$\mu(E)>0$ and $E\cap\partial P =\emptyset$. Then for
$\check{\mu}$-almost every $\check{x}\in\check{I}$, there exists
$k\in\inte$ and $\check{y}\in\text{inc}\,(E)$ such that
$\check{f}^k(\check{y})=\check{x}$. In other words,
$$
\check{\mu}\bigl(\bigcup_{k\ge 0}\check{f}^{k}(\text{inc}\,(E))\bigr)=1.
$$
\end{thm}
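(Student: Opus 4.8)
The plan is to push the whole problem up to the natural extension $(\check{\mathcal I},\check f_e)$, where the invariant set $\check{\mathcal I}'$ encodes exactly the statement ``$\check x_0$ descends from the zero level of the tower in the distant past''. Combining this with ergodic recurrence of the base orbit to $E$ will force a $\check\mu$-typical point to be a forward iterate under $\check f$ of a point of $\text{inc}\,(E)$.

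First I would invoke Proposition~\ref{buzzi2}: since $h_\mu(f)>\max\{T,h_{top}(\Delta P,f)\}$, there is an $\check f_e$-invariant ergodic Borel probability measure $\check\mu_e$ on $\check{\mathcal I}$ with $\check p_*\check\mu_e=\check\mu$ and $\check\mu_e(\check{\mathcal I}')=1$; moreover, from its construction $\check\mu_e=(\pi_e^{-1})_*(\mu_e|\mathcal I')$, so $(\pi_e)_*\check\mu_e=\mu_e$, the natural extension of $\mu$, and $p_*\mu_e=\mu$. By the commutative diagram~\eqref{diagram} the map $\pi_e$ carries an orbit $\{\check x_n\}_{n\in\allinte}\in\check{\mathcal I}$ to the $f$-orbit $\{\pi(\check x_n)\}_{n\in\allinte}\in\mathcal I$; since $(\pi_e)_*\check\mu_e=\mu_e$, every $\mu_e$-full-measure subset of $\mathcal I$ has $\check\mu_e$-full-measure preimage under $\pi_e$.

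Next I would fix a $\check\mu_e$-typical orbit $\{\check x_n\}$ and extract two facts. Since $\check\mu_e(\check{\mathcal I}')=1$, there is $N=N(\{\check x_n\})\ge 0$ with $\check x_0=\check f^{\,n}(\text{inc}\,(\pi(\check x_{-n})))$ for all $n\ge N$. On the other hand $\mu_e(p^{-1}(E))=\mu(E)>0$, and since $\mu$, hence $\mu_e$, is ergodic, the Birkhoff ergodic theorem applied to $f_e^{-1}$ shows that the set of $\{y_n\}\in\mathcal I$ with $y_{-n}\in E$ for infinitely many $n\ge 0$ has full $\mu_e$-measure; pulling back under $\pi_e$, for $\check\mu_e$-a.e.\ $\{\check x_n\}$ the set $\{n\ge 0:\pi(\check x_{-n})\in E\}$ is infinite. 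Choosing $m\ge N$ in this set, the point $\pi(\check x_{-m})$ lies in $E$, hence outside $\partial P$ since $E\cap\partial P=\emptyset$, so $\text{inc}\,(\pi(\check x_{-m}))$ is a well-defined point of $\text{inc}\,(E)$ and $\check x_0=\check f^{\,m}(\text{inc}\,(\pi(\check x_{-m})))\in\check f^{\,m}(\text{inc}\,(E))$.

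Finally, with $\check B:=\bigcup_{k\ge 0}\check f^{\,k}(\text{inc}\,(E))$, the previous step shows $\check x_0=\check p(\{\check x_n\})\in\check B$ for $\check\mu_e$-a.e.\ $\{\check x_n\}$; since $\check p^{-1}(\check B)=\{\{\check x_n\}:\check x_0\in\check B\}$ then has full $\check\mu_e$-measure and $\check p_*\check\mu_e=\check\mu$, this gives $\check\mu(\check B)=1$. (To keep this a statement about a Borel set one observes that $\check f$ is injective and continuous on each element $\check D\in\check{\mathcal D}$ away from $\pi^{-1}(\partial P)$, so each $\check f^{\,k}(\text{inc}\,(E))$ is a countable union of Borel sets by the Lusin--Souslin theorem and $\check B$ is Borel.) Note that only $\mu(E)>0$ and $E\cap\partial P=\emptyset$ are used, not $E\subset X$. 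The one step that needs care is the first one: checking that Proposition~\ref{buzzi} applies verbatim to $\check f_e$ at this entropy threshold and that $\check\mu$ is exactly the measure furnished by Proposition~\ref{buzzi2}, so that it lifts to a $\check\mu_e$ supported on $\check{\mathcal I}'$; everything after that is the synchronization of the ``descent to the zero level'' built into $\check{\mathcal I}'$ with the ergodic return of the base orbit to $E$.
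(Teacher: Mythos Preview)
Your proof is correct and follows essentially the same approach as the paper: lift to the natural extension $(\check{\mathcal I},\check f_e)$, use that $\check\mu_e(\check{\mathcal I}')=1$ from Proposition~\ref{buzzi2}, and combine the ``descent from level zero'' built into $\check{\mathcal I}'$ with Birkhoff recurrence of the backward orbit to $E$, then project via $\check p$. The only cosmetic difference is that the paper applies Birkhoff to $\check f_e^{-1}$ on $\check{\mathcal I}$ with the set $\check p^{-1}(\pi^{-1}(E))$, whereas you apply it to $f_e^{-1}$ on $\mathcal I$ with $p^{-1}(E)$ and pull back through $\pi_e$; these are equivalent via the commutative diagram.
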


\begin{proof}
Consider the set
$$
{\mathcal{R}}:=\{\{\check{x}_n\}_{n\in\mathbb{Z}}\in \check{\mathcal{I}}': \text{there exists }n_k\to\infty \text{ such that }
\pi(\check{x}_{-n_k})\in E\}.
$$
We claim that if $\check{\mu}_e({\mathcal{R}})=1$ then our statement holds. Indeed, set
$$
R:=\{\check{x}\in\check{I}:\text{there exist } k\in\mathbb{Z}
\text{ and }\check{y}\in\text{inc}\,(E)
\text{ such that }\check{f}^k(\check{y})=\check{x}\}.
$$
We have that $\check{p}({\mathcal{R}})\subset R$ and hence by
Proposition~\ref{buzzi2},
$$
1\ge\check{\mu}(R)\ge\check{\mu}(\check{p}({\mathcal{R}}))
\ge\check{\mu}_e({\mathcal{R}})=1.
$$
It follows that $\check{\mu}(R)=1$, which implies the desired result.

We therefore are left to prove that $\check{\mu}_e(\mathcal{R})=1$.
Note that the set  $\check{\mathcal{I}}'$ has full
$\check{\mu}_e$-measure and that
$\check{\mu}_e(\check{p}^{-1}(\pi^{-1}(E)))=\mu(E)>0$. Since $\mu$
is ergodic with respect to $f$, Proposition~\ref{buzzi2} yields that
$\check{\mu}$ is ergodic with respect to $\check{f}$. Note that the
inverse map $\check{f}_e^{-1}$ is well defined on the natural
extension and hence, it is ergodic with respect to $\check{\mu}_e$.
By Birkhoff's ergodic theorem, for $\check{\mu}_e$-almost every
$\{\check{x}_n\}_{n\in\mathbb{Z}}
\in\check{\mathcal{I}}'$, there exists
$n_k\to\infty$ such that
$\check{f}_e^{-n_k}(\{\check{x}_n\}_{n\in \mathbb{Z}})\in
\check{p}^{-1}(\pi^{-1}(E))$. This implies that
$$
\check{x}_{-n_k}=
\check{p}(\check{f}_e^{-n_k}(\{\check{x}_n\}_{n\in \mathbb{Z}}))
\in\pi^{-1}(E),
$$
i.e., $\pi(\check{x}_{-n_k})\in E$. For any $\{\check{x}_n\}\in
\check{\mathcal{I}}'$, we have that
$\check{x}_0=\check{f}^n(\text{inc}\,(\pi(\check{x}_{-n})))$ for
sufficiently large $n$. It follows that
$\check{\mathcal{I}}'\subseteq \mathcal{R}\pmod{\check{\mu}_e}$ and
hence,
$$
1=\check{\mu}_e(\check{\mathcal{I}}')
=\check{\mu}_e(\mathcal{R}),
$$
which implies the desired result.
\end{proof}

\begin{cor}\label{positiveW}
Assume that the map $f$ satisfies Conditions (P1) and (P2) of
Proposition~\ref{buzzi}. Let $\mu$ be an $f$-invariant ergodic Borel
probability measure on $I$ with
$h_{\mu}(f)>\max\{T, h_{top}(\Delta P,f)\}$ and $\check{\mu}$ its lift to the connected Markov extension. If $\mu(W)>0$ then $\check{\mu}(\check{W})>0$.
\end{cor}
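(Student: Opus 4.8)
The plan is to deduce the corollary directly from Theorem~\ref{basesweeps} applied to a suitable subset $E$ of the inducing domain. Since $\mu(W)>0$ and $W=\bigcup_{J\in S}J$ is a countable disjoint union, there exists at least one basic element $J_0\in S$ with $\mu(J_0)>0$. The issue is that Theorem~\ref{basesweeps} requires a set $E$ with $E\cap\partial P=\emptyset$, whereas $J_0$ may meet the boundary $\partial P$ of the invertible pieces. However, Condition~(C) (respectively (C$^+$)) guarantees that $f^i(U_{J_0})\cap\partial P=\emptyset$ for $0\le i<\tau(J_0)$, so in particular $J_0\cap\partial P=\emptyset$ since $J_0\subset U_{J_0}$. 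Moreover $J_0\subset X$ by the definition \eqref{set_X} of $X$. Thus $E:=J_0$ is an admissible choice in Theorem~\ref{basesweeps}: it satisfies $\mu(E)=\mu(J_0)>0$, $E\cap\partial P=\emptyset$ and $E\subset X$.

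First I would invoke Theorem~\ref{basesweeps} with $E=J_0$ to conclude that
$$
\check{\mu}\Bigl(\bigcup_{k\ge 0}\check{f}^k(\mathrm{inc}\,(J_0))\Bigr)=1.
$$
Next I would show that $\bigcup_{k\ge 0}\check{f}^k(\mathrm{inc}\,(J_0))$ is contained, up to a $\check{\mu}$-null set, in $\check{W}$ together with finitely many "pre-base" iterates, or more precisely that a positive-measure portion of it lands in $\check{W}$. The cleanest route is this: by the Poincar\'e recurrence theorem applied to the ergodic system $(\check{I},\check{f},\check{\mu})$ (ergodicity of $\check{\mu}$ comes from Proposition~\ref{buzzi2}), $\check{\mu}$-almost every point of $\mathrm{inc}\,(J_0)$ returns to $\mathrm{inc}\,(W)$; but actually the simplest observation is that $\check{F}(\mathrm{inc}\,(J_0))=\check{f}^{\tau(J_0)}(\mathrm{inc}\,(J_0))\subset\check{f}^{\tau(J_0)}(\mathrm{inc}\,(W))\subset\check{W}$ by the very definition \eqref{set_check_w} of $\check{W}$. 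So every point of $\mathrm{inc}\,(J_0)$ is mapped into $\check{W}$ after $\tau(J_0)$ steps, which means $\mathrm{inc}\,(J_0)\subset\check{f}^{-\tau(J_0)}(\check{W})$.

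Putting these together, from $\check{\mu}\bigl(\bigcup_{k\ge 0}\check{f}^k(\mathrm{inc}\,(J_0))\bigr)=1$ and the $\check{f}$-invariance of $\check{\mu}$, I would argue that $\check{\mu}(\mathrm{inc}\,(J_0))>0$: indeed if $\mathrm{inc}\,(J_0)$ were $\check{\mu}$-null then so would be each forward iterate $\check{f}^k(\mathrm{inc}\,(J_0))$ (forward images of null sets are null for an invariant measure when restricted appropriately — here one uses that $\check{p}_*\check{\mu}_e=\check{\mu}$ and pulls back via $\pi$, or more directly that $\check{\mu}(\check{f}(A))\le\check{\mu}(A)$ fails in general, so the careful formulation is via the natural extension as in the proof of Theorem~\ref{basesweeps}). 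The honest way: since $\mathrm{inc}\,(J_0)\subset\check{f}^{-\tau(J_0)}(\check{W})$, the set $\bigcup_{k\ge0}\check{f}^k(\mathrm{inc}\,(J_0))$ meets $\check{W}$ in a set containing $\bigcup_{k\ge\tau(J_0)}\check{f}^k(\mathrm{inc}\,(J_0))\cap\check{W}$; alternatively, $\check{F}(\mathrm{inc}(J_0))\subset\check{W}$ directly gives a subset of $\check{W}$, and one shows it has positive measure because $\check{\mu}(\mathrm{inc}(J_0))=\mu(J_0)/(\text{normalization})>0$ via $\pi_*\check{\mu}=\mu$ restricted to the zero level. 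The main obstacle — and the point requiring care — is the bookkeeping between $\check{\mu}$, its natural extension $\check{\mu}_e$, and the measures $\mu$, $\mu_e$ when passing forward images through $\check{f}$, since $\check{f}$ is not invertible; I expect the clean argument routes entirely through $\check{\mu}_e$ on $\check{\mathcal{I}}'$ exactly as in the proof of Theorem~\ref{basesweeps}, observing that the orbit segment realizing $\check{x}_0=\check{f}^n(\mathrm{inc}(\pi(\check{x}_{-n})))$ with $\pi(\check{x}_{-n_k})\in J_0$ forces $\check{x}_0\in\check{W}$ for those $n_k\ge\tau(J_0)$, whence $\check{\mu}_e$-a.e. point of $\mathcal{R}$ projects under $\check{p}$ into $\check{W}$ and therefore $\check{\mu}(\check{W})=1>0$.
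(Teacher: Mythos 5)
Your first step is sound and matches the paper's: the paper applies Theorem~\ref{basesweeps} with $E=W$ itself (your choice $E=J_0$ and your verification of $E\cap\partial P=\emptyset$ via Condition~(C) are fine, indeed slightly more careful than the paper on the hypothesis check). The gap is in the second step, where you pass from $\check{\mu}\bigl(\bigcup_{k\ge 0}\check{f}^k(\mathrm{inc}\,(J_0))\bigr)=1$ to $\check{\mu}(\check{W})>0$. Your final ``honest'' argument claims that $\check{x}_0=\check{f}^{n_k}(\mathrm{inc}\,(y))$ with $y\in J_0$ and $n_k\ge\tau(J_0)$ forces $\check{x}_0\in\check{W}$, hence $\check{\mu}(\check{W})=1$. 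This is false: $\check{W}=\bigcup_{m\ge 0}\check{F}^m(\mathrm{inc}\,(W))$ contains only the images of $\mathrm{inc}\,(y)$ at the \emph{cumulative return times} $\tau(J_0)+\cdots+\tau(J_{m-1})$ along the $F$-orbit of $y$; for intermediate $n$ the point $\check{f}^{n}(\mathrm{inc}\,(y))$ lies outside $\check{W}$ --- that is exactly the content of Theorem~\ref{firstreturn}. Accordingly $\check{\mu}(\check{W})=1$ cannot hold in general: Kac's formula in the proof of Theorem~\ref{mainthm} gives $\check{\mu}(\check{W})=1/Q_{\check{\nu}}<1$ unless $\tau\equiv 1$. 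Your alternative sub-route, getting $\check{\mu}(\mathrm{inc}\,(J_0))>0$ from $\pi_*\check{\mu}=\mu$, is also unjustified: $\pi^{-1}(J_0)$ consists of copies of $J_0$ on all levels of the tower, and nothing forces the zero-level copy $\mathrm{inc}\,(J_0)$ to carry positive $\check{\mu}$-measure.

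The correct bridge, which is what the paper uses, is the inclusion
$$
\bigcup_{k\ge 0}\check{f}^{k}(\mathrm{inc}\,(W))\ \subset\ \bigcup_{j\ge 0}\check{f}^{-j}(\check{W}).
$$
Indeed, if $\check{x}=\check{f}^{k}(\mathrm{inc}\,(y))$ with $y\in W$, let $\tau_m=\tau(J_0)+\cdots+\tau(J_{m-1})$ be the cumulative inducing times of the $F$-orbit of $y$ (which stays in $W$); choosing $m$ with $\tau_m\ge k$ and $j=\tau_m-k$ gives $\check{f}^{j}(\check{x})=\check{F}^{m}(\mathrm{inc}\,(y))\in\check{W}$. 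Then $\check{f}$-invariance of $\check{\mu}$ and countable subadditivity show that $\check{\mu}(\check{W})=0$ would force the right-hand union to be null, contradicting the full measure of the left-hand side. With this replacement your argument goes through.
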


\begin{proof}
It follows from Theorem~\ref{basesweeps} that 
$$
\check{\mu}(\bigcup_{k\ge 0}\check{f}^k(\text{inc}\,(W)))=1.
$$
Since $\check{\mu}$ is ergodic and
$$
\bigcup_{k\ge 0}\check{f}^k(\text{inc}\,(W))
\subset\bigcup_{j\ge 0}\check{f}^{-j}(\check{W}),
$$
we conclude that $\check{\mu}(\check{W})>0$.
\end{proof}


\section{Liftability: the proof of the Main Theorem}\label{liftabilityresult}


In this section we present the proof of the Main Theorem on the liftability of measures to inducing schemes. More precisely, we establish the following result.

\begin{thm}\label{mainthm}
Let $(I,P,f)$ be a piecewise invertible system and $\{S,\tau\}$ an inducing scheme satisfying either Conditions (M) and (C) or Conditions (M$^+$) and (C$^+$). Also assume that the map $f$ satisfies Conditions (P1) and (P2). Then any ergodic measure  $\mu\in\mathcal{M}(f,I)$ supported on $X$ with
$h_{\mu}(f)>T:=\max\{H, h_{top}(\Delta P,f)\}$ is liftable to the inducing scheme $\{S,\tau\}$.
\end{thm}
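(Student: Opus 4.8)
The plan is to reduce liftability to the inducing scheme to a first--return construction inside the connected Markov extension, using the machinery already assembled in Proposition~\ref{buzzi2}, Theorem~\ref{firstreturn}, Theorem~\ref{basesweeps} and Corollary~\ref{positiveW}. First I would record that $\mu(W)>0$: if $x\in f^k(J)$ with $0\le k<\tau(J)$ then $f^{\tau(J)-k}(x)\in f^{\tau(J)}(J)=W$, so $X\subseteq\bigcup_{m\ge 1}f^{-m}(W)$, and $f$--invariance gives $1=\mu(X)\le\sum_{m\ge 1}\mu(f^{-m}(W))=\sum_{m\ge 1}\mu(W)$, whence $\mu(W)>0$. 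Since in addition $h_\mu(f)>T=\max\{H,h_{top}(\Delta P,f)\}$ (and $T<h_{top}(f)$ by (P1)--(P2)), Proposition~\ref{buzzi2} furnishes an $\check f$--invariant ergodic Borel probability measure $\check\mu$ on $\check I$ with $\pi_*\check\mu=\mu$; and because $W\cap\partial P=\emptyset$ (immediate from Condition (C) or (C$^+$) with $i=0$), Corollary~\ref{positiveW} yields $\check\mu(\check W)>0$.

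Next I would induce $\check\mu$ on $\check W$. By Theorem~\ref{firstreturn}, in the (M)/(C) case $\check F\colon\check W\to\check W$ is the first return map of $(\check I,\check f)$ to $\check W$, with return time $R=\tau\circ\pi$; in the (M$^+$)/(C$^+$) case the theorem gives this statement for $\check W^+\supseteq\check W$, from which the same follows for $\check W$ (for $\check x\in\check W$ with $\pi(\check x)\in J$ one has $\check F(\check x)=\check f^{\tau(J)}(\check x)\in\check W$, while $\check f^i(\check x)\in\check W\subseteq\check W^+$ for some $0<i<\tau(J)$ would contradict the first--return property on $\check W^+$, using $\pi(\check W)=W$). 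Hence $\check\nu:=\check\mu(\check W)^{-1}\,\check\mu|_{\check W}$ is an $\check F$--invariant ergodic probability measure, and Kac's lemma applied to the ergodic system $(\check I,\check f,\check\mu)$ and the positive--measure set $\check W$ (most transparently on the invertible natural extension $(\check{\mathcal I},\check f_e,\check\mu_e)$, where the return time to $\check p^{-1}(\check W)$ is again $R\circ\check p$) gives $\int_{\check W}R\,d\check\mu=1$, hence $\int_{\check W}(\tau\circ\pi)\,d\check\nu=\check\mu(\check W)^{-1}<\infty$. I would then project: put $\nu:=\pi_*\check\nu$, a probability measure on $\pi(\check W)=W$. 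Conditions (C), (C$^+$) guarantee that each orbit segment $\check x,\check f\check x,\dots,\check f^{\tau(J)-1}\check x$ with $\check x\in\check W\cap\pi^{-1}(J)$ avoids $\pi^{-1}(\partial P)$, so $\pi$ intertwines $\check f^i$ with $f^i$ along it; in particular $\pi\circ\check F=F\circ\pi$ on $\check W$, so $\nu$ is $F$--invariant, and ergodic as a factor of $(\check W,\check F,\check\nu)$, with $Q_\nu=\int_W\tau\,d\nu=\int_{\check W}(\tau\circ\pi)\,d\check\nu<\infty$. Consequently $\mathcal L(\nu)$ is a well defined $f$--invariant Borel probability measure on $X$, and it is ergodic, being the factor under $(x,k)\mapsto f^k(x)$ of the ergodic Kac tower over $(W,F,\nu)$.

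It then remains to show $\mathcal L(\nu)=\mu$. Both are $f$--invariant ergodic Borel probability measures supported on $X$, so it suffices to show they are not mutually singular. On $W$ the $k=0$ terms of \eqref{lift} already give $\mathcal L(\nu)(E)\ge Q_\nu^{-1}\sum_{J\in S}\nu(E\cap J)=Q_\nu^{-1}\nu(E)$ for Borel $E\subseteq W$, i.e.\ $\mathcal L(\nu)|_W\ge Q_\nu^{-1}\nu$; and since $\check W\subseteq\pi^{-1}(W)$ we have $\mu|_W=\pi_*(\check\mu|_{\pi^{-1}(W)})\ge\pi_*(\check\mu|_{\check W})=\check\mu(\check W)\,\nu$. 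Thus $\mu$ and $\mathcal L(\nu)$ both dominate the nonzero measure $\min\{Q_\nu^{-1},\check\mu(\check W)\}\,\nu$ on $W$, so they are not mutually singular and therefore equal. This gives $\mu=\mathcal L(\nu)$ with induced measure $i(\mu)=\nu$, i.e.\ $\mu$ is liftable to $\{S,\tau\}$, as required.

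The main obstacle I expect is the inducing step: because $\check f$ is not globally injective one cannot identify the Kac tower over $(\check W,\check F)$ with $\check I$ by a measurable isomorphism, so the induced measure $\nu$ with $Q_\nu<\infty$ has to be extracted purely from the first--return structure of Theorem~\ref{firstreturn} together with Kac's lemma (this is where passing to the natural extension is convenient), and the identity $\mathcal L(\nu)=\mu$ recovered by the soft ergodicity argument above rather than by a direct change of variables. The two places where the hypotheses are genuinely used are the verification that $\check F\colon\check W\to\check W$ is still the first return map to $\check W$ in the (M$^+$)/(C$^+$) setting, and the bookkeeping showing that Conditions (C)/(C$^+$) keep the relevant orbit segments away from $\pi^{-1}(\partial P)$, so that $\pi$ really does relate the two tower structures.
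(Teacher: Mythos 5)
Your proposal is correct, and its overall architecture coincides with the paper's: lift $\mu$ to the connected Markov extension via Proposition~\ref{buzzi2}, get $\check{\mu}(\check{W})>0$ from Corollary~\ref{positiveW}, use Theorem~\ref{firstreturn} to see that $\check{F}$ is a first return map, induce to obtain an $\check{F}$-invariant $\check{\nu}$ with $Q_{\check\nu}=1/\check\mu(\check W)$ by Kac, and project by $\pi$ to get $\nu$. Where you genuinely diverge is the final identification $\mathcal{L}(\nu)=\mu$. The paper does this by a direct change of variables: it writes $\check{\mu}$ as the Kac-tower spread of $\check{\nu}$ over the return-time tower inside $\check{I}$, i.e.\ $\check{\mu}(\check E)=Q_{\check\nu}^{-1}\sum_{J}\sum_{k=0}^{\tau(J)-1}\check{\nu}(\check f^{-k}(\check E)\cap\pi^{-1}(J))$, and pushes this forward term by term. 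You instead observe that $\mathcal{L}(\nu)$ is itself an ergodic $f$-invariant probability measure (as a factor of the Kac tower over $(W,F,\nu)$), that $\mathcal{L}(\nu)|_W\ge Q_\nu^{-1}\nu$ from the $k=0$ terms, and that $\mu|_W\ge\check\mu(\check W)\,\nu$ since $\check W\subseteq\pi^{-1}(W)$; two ergodic measures dominating a common nonzero measure cannot be mutually singular, hence coincide. This ``soft'' route buys you exactly what you flag at the end: it sidesteps the need to justify the tower representation of $\check\mu$ for the non-invertible map $\check f$ (which the paper asserts without comment, and which is most cleanly justified via the natural extension). Two further points in your favor: you explicitly deduce the first-return property of $\check F$ on $\check W$ from the statement for $\check W^+$ in the (M$^+$)/(C$^+$) case, a step Theorem~\ref{firstreturn} leaves implicit and the paper's proof of Theorem~\ref{mainthm} skips; and you note that $W\cap\partial P=\emptyset$ (needed to invoke Theorem~\ref{basesweeps} with $E=W$) follows from Condition (C) or (C$^+$) at $i=0$. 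The trade-off is that the paper's computation is entirely explicit and yields the identity $\mu=\mathcal{L}(\nu)$ on every Borel set directly, whereas yours leans on the standard ergodic-decomposition dichotomy; both are valid.
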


\begin{proof}
Since $\mu$ is an invariant ergodic probability measure and $X\subseteq \bigcup_{k\ge 0}f^k(W)$, the fact that $\mu(X)=1$ implies that $\mu(W)>0$. Let $\check{\mu}$ be the lifted measure
to the connected Markov extension as in Proposition \ref{buzzi2} and let $\check{W}$ be given by \eqref{set_check_w}. Then Corollary~\ref{positiveW} yields that $\check{\mu}(\check{W})>0$.

It follows from Theorem~\ref{firstreturn} that $(\check{W},\check{F})$ is the first return time map of $(\check{I},\check{f})$ to $\check{W}$. Since $\check{\mu}(\check{W})>0$ and $\tau(J)$ is the first
return time of $\pi^{-1}(J)\cap\check{W}$ to $\check{W}$, we have that
the measure $\check{\nu}=\frac{1}{\check{\mu}(\check{W})}\check{\mu}|\check{W}$ is $\check{F}$-invariant. Furthermore, for any measurable set $\check{E}\subset \check{I}$,
\begin{multline*}
 \check{\mu}(\check{E})=\sum_{J\in S}\sum_{k=0}^{\tau(J)-1} \check{\mu}|\check{W}(\check{f}^{-k}(\check{E})\cap \pi^{-1}(J)) \\ =\frac{1}{Q_{\check{\nu}}} \sum_{J\in S}\sum_{k=0}^{\tau(J)-1}\check{\nu}( \check{f}^{-k}(\check{E})\cap \pi^{-1}(J)),
\end{multline*}
where by Kac's formula,
$$
Q_{\check{\nu}}=\sum_{J\in S}\tau(J)\check{\nu}(\pi^{-1}(J)\cap \check{W})=\frac{1}{\check{\mu}(\check{W})}.
$$
We shall now prove that $\nu$ is the induced measure for $\mu$, i.e., $\mathcal{L}(\nu)=\mu$. Note that we have the following two conjugacies:
$$
\pi\circ\check{f}|\check{I}\setminus\pi^{-1}(\partial P)
=f\circ\pi|\check{I}\setminus\pi^{-1}(\partial P),\quad
\pi\circ\check{F}=F\circ \pi.
$$
It follows that $\nu:=\pi_*\check\nu$ is an $F$-invariant Borel probability measure and
$$
Q_\nu=\sum_{J\in S}\tau(J)\nu(J)=\sum_{J\in S}\tau(J)\check{\nu}(\pi^{-1}(J))=Q_{\check{\nu}}.
$$
For any $\mu$-measurable set $E$ we have
\[
\begin{aligned}
\mu(E)&=\check{\mu}(\pi^{-1}(E))\\
&=\frac{1}{Q_{\check{\nu}}}\sum_{J\in S}\sum_{k=0}^{\tau(J)-1}\check{\nu}( \check{f}^{-k}(\pi^{-1}(E))\cap \pi^{-1}(J)) \\
&=\frac{1}{Q_\nu}\sum_{J\in S}\sum_{k=0}^{\tau(J)-1}\check{\nu}(\pi^{-1}(f^{-k}(E)\cap J))\\
&=\frac{1}{Q_\nu}\sum_{J\in S}\sum_{k=0}^{\tau(J)-1}\nu(f^{-k}(E)\cap J)=\mathcal{L}(\nu)(E)
\end{aligned}
\]
(see ~\eqref{lift}), which is what we need.
\end{proof}


\section{Applications}


\subsection{Constructing canonical inducing schemes via nice sets.}

Let $f:I\to I$ be a piecewise invertible map of a compact metric space $I$. We describe a general approach for building canonical minimal (respectively, minimal extendible) inducing schemes, i.e., those that satisfy Condition (M) (respectively, (M$^+$)), by exploiting the notion of \emph{nice sets}. This notion was first introduced by Martens (see
\cite{Martens94}) in the context of interval maps.

Let us write $f^n(J)\simeq V$ if $f^n|J$ is a homeomorphism with
$f^n(J)=V$. An open set $V$ is said to be \emph{nice} (for the map
$f$) if
$$
f^n(\partial V)\cap V=\emptyset\quad\text{for all}\quad
n\ge 0
$$
(here $\partial V={\overline V}\setminus V$). In general, a
given map $f$ may admit no nice sets. In the case of interval maps,
however, it is easy to see that any periodic cycle contains endpoints of nice intervals. Because the pre-images of nice sets are either disjoint or nested, they are good candidates for being basic elements of minimal inducing schemes. More precisely, the collection $S$ of all first homeomorphic pre-images of a nice set, contained in the nice set, determines an inducing scheme. It satisfies Condition~(M), since the pre-images are homeomorphic, so the partition elements must be contained in some domains of invertibility of $f$. Let us make the above observation rigorous.

\begin{prop}\label{disjoint}
Let $V$ be a nice set for $f$ and let $J$ and $J'$ be such that
$f^n(J)\simeq V\simeq f^m(J')$ with $n\le m$. Then either
$$
\text{int}\,J\cap\text{int}\,J'=\emptyset \quad\text{ or }\quad J'\subset
J\quad\text{and}\quad n<m.
$$
\end{prop}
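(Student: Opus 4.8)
The plan is to analyze the two sets $J$ and $J'$ by pushing them forward under $f^n$, where they both become homeomorphic images sitting over the nice set $V$, and then transporting the disjointness/nesting dichotomy back. First I would apply $f^n$ to both sets: $f^n(J)=V$, and since $n\le m$, the set $f^n(J')$ satisfies $f^{m-n}(f^n(J'))=V$, so $f^n(J')$ is a homeomorphic pre-image of $V$ under $f^{m-n}$ (contained, a priori, somewhere in $I$, but I will want to show it lies in $V$ or is disjoint from it). The key topological input is the defining property of a nice set: $f^k(\partial V)\cap V=\emptyset$ for all $k\ge 0$. I would use this to argue that for any homeomorphic pre-image $Z$ of $V$ under some iterate $f^k$ with $k\ge 1$, either $\overline{Z}\subset V$ or $\overline{Z}\cap V=\emptyset$: indeed, if $Z$ met $V$ but were not contained in it, then $\partial Z$ would meet $V$ (by connectedness of $V$), and $\partial Z\subset f^{-k}(\partial V)$, so some point of $\partial V$ would have an image $f^k(\cdot)$ landing in $V$, contradicting niceness. (Here I need $J,J'$ to be connected — this follows since, being basic elements of the canonical scheme built from first homeomorphic pre-images inside $V$, they are connected components of pre-images of the connected set $V$.)

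Next I would distinguish two cases according to whether $f^n(J')\cap V=\emptyset$ or $f^n(J')\subset V$ — these are the only possibilities by the previous paragraph applied with $k=m-n$; note the borderline case $m=n$ forces $f^n(J')=V$ directly. In the first case, $f^n(J')\cap V=\emptyset$ means $f^n(J')\cap f^n(J)=\emptyset$; since $f^n$ is injective on a neighborhood of $J$ (being a homeomorphism there) this should give $\operatorname{int}J\cap\operatorname{int}J'=\emptyset$, though one must be slightly careful because $f^n$ need not be injective on all of $I$ — here I would use that $J$ and $J'$ are both subsets on which $f^n$ restricts to a homeomorphism and that basic elements of the scheme are, by construction, \emph{first} homeomorphic pre-images, so their iterates $f^k(J), f^k(J')$ for $k<n$ have controlled positions; alternatively, intersecting interiors would produce a common point whose forward orbit under $f^n$ lies in both $f^n(J)=V$ and $f^n(J')$, contradicting disjointness. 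In the second case, $f^n(J')\subset V$, and I claim $J'\subset J$: pulling $f^n(J')\subset V=f^n(J)$ back through the homeomorphism $f^n|_J$ (whose inverse branch is single-valued and continuous) gives a copy of $J'$ inside $J$; since $J'$ itself is a first homeomorphic pre-image contained in $V$ and its image under $f^n$ already lies in $V=f^n(J)$, minimality/first-ness of the pre-images forces this copy to be $J'$, hence $J'\subset J$. Finally, $n<m$ in this case because $n=m$ would give $f^n(J')=V$, not a proper subset — wait, more carefully: if $n=m$ then $f^n(J')=V=f^n(J)$ and by the homeomorphism property on $J$ one gets $J=J'$, so the nesting $J'\subsetneq J$ genuinely requires $n<m$; if $J'=J$ the conclusion "either disjoint or ($J'\subset J$ and $n<m$)" is not what's stated, so I should note the proposition implicitly assumes $J\ne J'$, or reads the nesting as allowing equality only when $n=m$ — I would clarify this in the write-up.

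The main obstacle I anticipate is the bookkeeping around injectivity of $f^n$: the map $f^n$ is a homeomorphism on each of $J$ and $J'$ separately, but not globally, so "disjoint images $\Rightarrow$ disjoint sets" and "nested images $\Rightarrow$ nested sets" both need the extra structure that these are \emph{first} (minimal-time) homeomorphic pre-images inside the nice set, and that, along the orbit segments $J, f(J),\dots,f^n(J)$, one never re-enters a situation allowing an earlier homeomorphic pull-back. I would handle this by a short induction on $n$ (or on $m-n$), using at each step the nice-set property to confine the relevant pieces to $V$ or its complement, and invoking the definition of the canonical scheme (first homeomorphic pre-images) to pin down exactly which pre-image we land on. The other steps — the forward-push, the case split, the pull-back through the homeomorphism — are routine once the nice-set dichotomy is established.
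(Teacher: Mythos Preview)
Your approach has the right ingredients but contains two concrete errors. First, the dichotomy argument for $Z=f^n(J')$ versus $V$ is reversed: you should use connectedness of $Z$, not of $V$. If $Z$ meets $V$ but $Z\not\subset V$, then $Z\cap\partial V\neq\emptyset$; picking $p\in Z\cap\partial V$ gives $f^{m-n}(p)\in f^{m-n}(Z)=V$ with $p\in\partial V$, and \emph{this} is what contradicts niceness. Your version instead produces a point of $\partial Z$ lying in $V$ and maps it forward to $\partial V$ --- that is a point of $V$ whose iterate lies in $\partial V$, which is not forbidden by the nice-set condition. Second, and this is the real gap: in the nested case $f^n(J')\subset V=f^n(J)$ you invoke ``minimality/first-ness of the pre-images'' to identify the pulled-back copy with $J'$ itself. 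But the proposition is stated for \emph{arbitrary} sets $J,J'$ with $f^n(J)\simeq V\simeq f^m(J')$; no minimality is assumed. Indeed, the proposition is precisely what is used (in Theorem~\ref{nice}) to show that the collection $Q$ of \emph{all} homeomorphic pre-images is nested, from which the minimal scheme $S'$ is only afterwards extracted --- so assuming $J,J'$ are already ``first'' pre-images is not available here. (Your worry about the disjoint case, by contrast, is unfounded: $f^n(J)\cap f^n(J')=\emptyset$ implies $J\cap J'=\emptyset$ immediately, since any common point would have a common image --- no injectivity is needed.)

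The paper bypasses both issues with a single direct boundary argument: assuming the interiors meet but $J'\not\subset J$, connectedness of $J'$ yields a point $p\in\partial J\cap\operatorname{int} J'$; then $f^n(p)\in\partial V$ (from $p\in\partial J$) while $f^n(p)\in\operatorname{int} f^n(J')$ (from $p\in\operatorname{int} J'$), and applying $f^{m-n}$ sends this point of $\partial V$ into $\operatorname{int} V$, contradicting niceness. Working at the level of $J,J'$ rather than their images is what makes the conclusion $J'\subset J$ follow from connectedness alone, with no appeal to global injectivity of $f^n$ or to any minimality hypothesis.
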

\begin{proof}
Assume $\partial J\cap\text{int}\,J'\neq\emptyset$. Then $n\neq m$ (and hence $n<m$) and $\text{int}\,f^n(J')\cap\partial V\neq\emptyset$. This implies that
$f^{m-n}(\partial V)\subset\text{int}\,V$ leading to a contradiction.
\end{proof}
Given a nice set $V$ and an open neighborhood
$V^+\supset \overline{V}$, consider the following two collections of sets
$$
\begin{aligned}
Q:=&\{J\subsetneq V\colon f^{\tau(J)}(J)\simeq V\text{ for some } \tau(J)\in\inte\},\\
Q^+:=&\{J\in Q\colon f^{\tau(J)}(J^+)\simeq V^+\text{ for some open set }J^+\supset J\}.
\end{aligned}
$$
Further, define
$$
\begin{aligned}
S':=&\{J\in Q\colon\tau(J)<\tau(J')\ \text{ for all }\, J'\in S\text{ with } J\cap J'\neq\emptyset\},\\
S'^+:=&\{J\in Q^+\colon\tau(J)<\tau(J')\ \text{ for all }\, J'\in S^+
\text{ with }J\cap J'\neq\emptyset\}.
\end{aligned}
$$
We can then set
\[
\mathcal{V}=\bigcup_{J\in S'}\,J, \quad
\mathcal{V^+}=\bigcup_{J\in S'^+}\,J
\]
and consider the induced map $F:\mathcal{V}\to V$ defined by
$F|J:=f^{\tau(J)}|J$ and the $F$-invariant set
$$
W:=\displaystyle{\bigcap_{k\ge 0} F^{-k}(\mathcal{V})}.
$$
Finally, define the \emph{canonical inducing schemes}  $\{S, \tau\}$ and $\{S^+, \tau\}$ where
\begin{equation}\label{above}
S:=\{J\cap W\colon J\in S'\}, \quad S^+:=\{J\cap W\colon J\in S'^+\}
\end{equation}
and $\tau(J\cap W)=\tau(J)$ for $J\in S'$ (respectively, $J\in S'^+$).
\begin{thm}\label{nice}
Given a connected nice set $V$ and a connected open neighborhood $V^+\supset{\overline V}$, the canonical inducing scheme $\{S, \tau\}$ (respectively, $\{S^+, \tau\}$) satisfies Conditions~(M) and (C) (respectively, (M$^+$) and (C$^+$)).
\end{thm}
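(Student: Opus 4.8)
The plan is to peel apart the three assertions bundled into Condition (M) --- the first clause (the homeomorphic extension $U_J$ with $f^{\tau(J)}(J)=W$, a sharpened (H1)), the minimality of the inducing time, and Condition (C) --- reading the first and third straight off the construction and reducing the second to the maximality built into the definition of $S'$ by means of Proposition~\ref{disjoint}. Conditions (M$^+$) and (C$^+$) I would treat in parallel, and in fact they turn out to be slightly easier.

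For the first clause and (C): fix $\hat J\in S'$, so $\hat J$ is a connected open subset of $V$ on which $f^{\tau(\hat J)}$ is a homeomorphism onto $V$, whence $f^i|\hat J$ is a homeomorphism for $0\le i\le\tau(\hat J)$. If $f^i(\hat J)$ met $\partial P$ for some $i<\tau(\hat J)$ then, since on the two sides of $\partial P$ the map $f$ is given by distinct homeomorphic branches, $f$ --- and hence $f^{i+1}|\hat J$ --- would fail to be continuous there; so $f^i(\hat J)\cap\partial P=\emptyset$ for $0\le i<\tau(\hat J)$, which is Condition (C) with $U_{\hat J\cap W}:=\hat J$. (By connectedness each $f^i(\hat J)$ then lies in a single piece $A_{k_i}\in P$, a fact I will use below.) For the first clause one still needs $f^{\tau(\hat J)}(\hat J\cap W)=W$; since $W=\bigcap_{k\ge0}F^{-k}(\mathcal V)$ is forward $F$-invariant and $F|\hat J=f^{\tau(\hat J)}|\hat J$ maps $\hat J$ homeomorphically onto $V\supseteq\mathcal V\supseteq W$, a direct check gives $F(\hat J\cap W)=W$. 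For (M$^+$)/(C$^+$) I would set $W^+:=V^+$ (admissible since $\overline W\subseteq\overline V\subsetneq V^+$) and take $J^+$ to be the neighborhood $\hat J^+$ of $\hat J$ supplied by the definition of $Q^+$, with $f^{\tau(\hat J)}(\hat J^+)\simeq V^+$; the same two arguments then give $f^i(\hat J^+)\cap\partial P=\emptyset$ for $i<\tau(\hat J)$, together with $f^{\tau(\hat J)}(\hat J^+)=W^+$ and $f^{\tau(\hat J)}(\hat J\cap W)=W$.

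For minimality, suppose $L\subseteq I$, $m\in\inte$ and a connected open $U_L\supseteq L$ satisfy: $f^m|U_L$ a homeomorphism, $f^m(L)=W$, and $L\cap(\hat J_0\cap W)\ne\emptyset$ for some $\hat J_0\in S'$ with $\tau_0:=\tau(\hat J_0)$; fix $x\in L\cap\hat J_0\cap W$ and assume for contradiction $m<\tau_0$. The key leverage from niceness is: a connected set $A$ that meets $V$, with $f^j|A$ continuous and $f^j(A)\subseteq V$, must lie in $V$ --- else a point $p\in A\cap\partial V$ would give $f^j(p)\in V$, contradicting $f^j(\partial V)\cap V=\emptyset$. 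Taking $A=L$, $j=m$ yields $L\subseteq V$. Next I would note that each $F^{-k}(\mathcal V)$ is open and dense in $V$, so by Baire's theorem $W$ is dense in $V$; consequently $f^m(U_L)$, being open and containing $W$, is dense in $V$, and enlarging $U_L$ slightly within the extended domains $U_{k_i}\supseteq\overline{A_{k_i}}$ of the pieces traversed by its orbit gives a connected open $U_L'\supseteq U_L$ on which $f^m$ is still a homeomorphism and with $f^m(U_L')\supseteq V$ (this is exactly where the compact containment $\overline{A_{k_i}}\subseteq U_{k_i}$ is needed). Then $J^*:=(f^m|U_L')^{-1}(V)$ is connected, contains $x$, is carried homeomorphically onto $V$ by $f^m$, and by the niceness leverage $J^*\subseteq V$; ruling out the degenerate possibility $J^*=V$ (i.e.\ $f^m$ a homeomorphism of $V$ onto itself, incompatible with minimality of $\tau_0$ upon iteration), we conclude $J^*\in Q$ with inducing time $m$. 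Since $x\in J^*\cap\hat J_0$ and $m<\tau_0$, Proposition~\ref{disjoint} forces $\hat J_0\subseteq J^*$; if the inclusion is proper this contradicts the maximality of $\hat J_0$ in $Q$ encoded in $S'$, and if $\hat J_0=J^*$ then $\hat J_0$ returns homeomorphically to $V$ at time $m<\tau_0$, contradicting minimality of $\tau_0$. Hence $m\ge\tau_0$. For (M$^+$) the argument is shorter: there one is handed $f^m(L^+)=W^+=V^+$ outright, so $J^*:=(f^m|L^+)^{-1}(V)$ lands in $Q^+$ at once (with $L^+$ as its own extending neighborhood) without any density input, and one finishes as before.

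The step I expect to be the main obstacle is the minimality clause of the plain Condition (M): turning ``$f^m$ maps $L$ onto $W$'' into ``$f^m$ maps a slightly larger set onto all of $V$'', which is what lets one enter $Q$ and apply Proposition~\ref{disjoint}. This rests on two things --- that niceness keeps connected sets from straddling $\partial V$, and that $W$ is dense in $V$ (the Baire argument on the open dense sets $F^{-k}(\mathcal V)$) --- plus a modest amount of topological care (the choice of the extended domains $U_{k_i}$, preservation of injectivity under the enlargement, and excluding the periodic case $J^*=V$). The $+$-versions, having access to the full neighborhood $W^+=V^+$, sidestep the density ingredient, so there the only real work is the bookkeeping around $\partial P$ and $\partial V$.
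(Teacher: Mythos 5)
Your overall route coincides with the paper's: read the first clause of (M) and Condition (C) off the construction, and for the minimality clause promote the hypothesis ``$f^m(L)=W$'' to the existence of a connected open set $U\supset L$ with $f^m(U)\simeq V$, so that $U\in Q$ and the minimality encoded in $S'$ (via Proposition~\ref{disjoint}) forces $m\ge\tau(J)$. The paper disposes of the promotion step in one line (``by our assumption, there exists an open connected set $U$\dots such that $f^m(U)=V$''), and you correctly identify it as the crux. But your way of filling it in does not work: you assert that each $F^{-k}(\mathcal{V})$ is open and \emph{dense} in $V$ and invoke Baire to conclude that $W$ is dense in $V$. There is no reason for $\mathcal{V}=\bigcup_{J\in S'}J$ (equivalently $\bigcup_{J\in Q}J$) to be dense in $V$ --- the paper itself remarks immediately after the theorem that $S$ may even be empty, and in general whole open subsets of $V$ may fail to admit any homeomorphic pull-back of $V$. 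Without density, your enlargement of $U_L$ to a connected open $U_L'$ with $f^m(U_L')\supseteq V$ collapses; and even granting density, a connected open set that is dense in $V$ need not contain $V$ outside dimension one, so the enlargement would still require an argument. Hence the minimality clause of the plain Condition (M) is not established. (Your treatment of (M$^+$), where $f^m(L^+)=W^+=V^+$ is given outright and no density input is needed, is sound and matches the easy case.)

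Two smaller points. Your justification of Condition (C) --- that $f^i(\hat J)\cap\partial P\ne\emptyset$ would make $f^{i+1}|\hat J$ \emph{discontinuous} --- is wrong as stated: for a unimodal map $f$ is continuous at the critical point, which lies in $\partial P$; what fails across $\partial P$ is local injectivity, and even that only when the partition is not artificially refined. The paper dismisses (C) as ``obvious,'' so you are at a comparable level of care, but the stated reason does not hold. Also, you apply the niceness/connectedness argument to $L$ itself to conclude $L\subseteq V$; Condition (M) only requires $U_L$ (not $L$) to be connected, so that argument should be run on the connected set $(f^m|U_{L'})^{-1}(V)$ rather than on $L$.
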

\begin{proof}
We only consider the case of the inducing scheme $\{S, \tau\}$ and show that it satisfies Conditions~(M) and (C). The proof in the case of  the inducing scheme $\{S^+, \tau\}$ is similar. By definition, the elements $L\in S$ are homeomorphic pre-images of $W$, which are contained in elements $J\in S'$. The latter are homeomorphic pre-images of $V$, so $F(L)=f^{\tau(J)}(L)\simeq W$. Since all
$L\in S$ satisfy $L\subset J$ for some $J\in S'\subseteq Q$, we choose $U_L=J$ and we have that $f^{\tau(L)}(L)=W$ and
$f^{\tau(L)}|L$ is a homeomorphism. Consider a set $L'\subset I$, an open connected set $U_{L'}\supset L'$, and
$m\in\mathbb{N}$ such that $f^m|U_{L'}$ is a homeomorphism and
$f^m(L')\simeq W$. Then by our assumption, there exists an open connected set $U$, $L'\subset U\subset U_{L'}$ such that
$f^m(U)=V$. It follows that if $L'\cap L\ne\emptyset$ for some
$L\in S$ then $U\cap V\ne\emptyset$ and $U\cap\partial V=\emptyset$. Hence, $U\in Q$. This implies that $m\ge\tau(L)$ and Condition (M) follows. Condition (C) is obvious.
\end{proof}

In general, the collection $S$ in the previous theorem may be empty. However, in certain particular cases not only one can show that $S$ is a non-empty collection but that the set $W$ has full relative Lebesgue measure in $V$ in the sense that
$\text{Leb}(V\setminus W)=\text{Leb}(V\setminus\bigcup_{J\in S'}\,J)=0$ (respectively, $\text{Leb}(V\setminus\bigcup_{J\in S^+}\,J)=0$ in the case we are interested in minimal extendibility, i.e. Condition~(M$^+$)) with respect to some \emph{natural} Lebesgue measure $\text{Leb}$ in $I$.

Assume that $I$ is a smooth manifold (possibly with boundary) and $\{S, \tau\}$ the inducing scheme constructed via a nice set $V\subset I$ (see \eqref{above} and Theorem~\ref{nice}). Assume further that the set $W$ has full relative Lebesgue measure in $V$. If $I=\bigcup_{k\in\mathbb{N}}f^{-k}(V)$, then Condition (H3) is
satisfied since then $I\setminus V$ cannot contain any open sets, and
thus neither can $S^\mathbb{N}\setminus h^{-1}(W)$. In particular, this is true if $f$ is a one-dimensional map, as in this case the set $\partial\mathcal{W}$ is countable.

\subsection{One-dimensional maps}
A \emph{cusp map} of a finite interval $I$ is a map $f:\bigcup_{j}I_j\to I$ of an at most countable family $\{I_j\}_j$ of disjoint open subintervals of $I$ such that
\begin{enumerate}
\item[$\circ$] $f$ is a $C^1$ diffeomorphism on each interval $I_j:=(p_j, q_j)$, extendible to the closure $\bar{I}_j$ (the extension is denoted by $f_j$);
\item[$\circ$] the limits $\lim_{\epsilon\to 0^+}Df(p_j+\epsilon)$ and $\lim_{\epsilon\to 0^+}Df(q_j-\epsilon)$ exist and are equal to either $0$ or $\pm\infty$;
\item[$\circ$] there exist constants $K_1>K_2>0$ and $C>0$, $\delta>0$ such that for every $j\in \mathbb{N}$ and every $x,x'\in\bar{I}_j$,
\[
|Df_j(x)-Df_j(x')|< C|x-x'|^\delta \text{ if } |Df_j(x)|\,,\,|Df_j(x')|\le K_1,
\]
\[
|Df^{-1}_j(x)-Df^{-1}_j(x')|< C|x-x'|^\delta \text{ if } |Df_j(x)|\,,\,|Df_j(x')|\ge K_2.
\]
\end{enumerate}
We call a point \emph{singular} if it belongs to $\partial I_j$ for some $j$. Critical points of $f$ are singular.

For cusp maps one has the following result, which can be derived from results in \cite{DobbsPhD} (see Theorem 1.9.10).
\begin{thm}\label{cusp} Let $f$ be a cusp map with finitely many intervals of monotonicity (i.e. finite number of intervals $I_j$). Suppose $f$ has an ergodic absolutely continuous invariant probability measure $m$ with strictly positive Lyapunov exponent. Then
\begin{enumerate}
\item $f$ possesses a nice set $V\subset I$ satisfying conditions of Theorem~\ref{nice};
\item $f$ admits canonical inducing schemes $\{S, \tau\}$ and $\{S^+, \tau\}$ satisfying Conditions~(M) and (C) and Conditions (M$^+$) and (C$^+$)) respectively;
\item the inducing domain $W$ has full relative Lebesgue measure in $V$ (i.e., $\text{Leb}(V\setminus W)=0$) and $\int_I\tau\,dm<\infty$.
\end{enumerate}
\end{thm}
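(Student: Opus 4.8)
The plan is to derive Theorem~\ref{cusp} from the induced-map construction for cusp maps in \cite{DobbsPhD}, feeding its output into Theorem~\ref{nice} and finishing the integrability statement with Kac's formula. First I would record that a cusp map with finitely many branches is a piecewise invertible system $(I,P,f)$ in the sense of Section~\ref{extension}: take $P=\{I_j\}_j$, which is a finite partition whose boundary $\partial P$ (the singular set) is finite, so that (A1) holds by hypothesis and (A2) holds because each branch $f_j$ extends to a homeomorphism of (a one-sided neighbourhood of) $\overline{I_j}$ onto its image — the only delicate points being the critical ones, where $Df$ degenerates but $f_j$ remains a homeomorphism. Once this is in place, all three assertions reduce to producing a good nice set.

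For assertion (1): by a standard one-dimensional argument — cf. the discussion preceding Proposition~\ref{disjoint}, e.g. using that any periodic cycle contains endpoints of nice intervals — the map $f$ possesses arbitrarily small connected nice intervals. Using \cite{DobbsPhD} I would select such a $V$, together with a small connected neighbourhood $V^+\supset\overline{V}$, for which the associated canonical inducing scheme of \eqref{above} enjoys the metric properties recorded below; since $m$ is absolutely continuous, its density is positive on a set of positive Lebesgue measure, so choosing $V$ to meet a Lebesgue density point of that set ensures $m(V)>0$. As $V$ and $V^+$ are connected with $\overline{V}\subset V^+$, the nice set $V$ satisfies the hypotheses of Theorem~\ref{nice}, which is assertion (1).

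Assertion (2) is then immediate: Theorem~\ref{nice} says precisely that the canonical inducing schemes $\{S,\tau\}$ and $\{S^+,\tau\}$ of \eqref{above} attached to $V$ and $V^+$ satisfy Conditions~(M) and (C), respectively Conditions~(M$^+$) and (C$^+$). For assertion (3), the equality $\text{Leb}(V\setminus W)=0$ — equivalently, that Lebesgue-almost every point of $V$ returns to $V$ through a full homeomorphic branch infinitely often — is provided by Theorem~1.9.10 of \cite{DobbsPhD}. Granting this, $\int_I\tau\,dm<\infty$ follows from Kac's formula: since $V$ is nice, every first-return branch of $f$ to $V$ maps onto $V$, so on the induced domain the inducing time $\tau$ coincides with the first-return time $\rho_V$ of $f$ to $V$; since $\tau\equiv 0$ off $W$, since $m\ll\text{Leb}$ forces $m(V\setminus W)=0$, and since $m$ is ergodic with $m(V)>0$, we obtain $\int_I\tau\,dm=\int_V\rho_V\,dm=m\!\left(\bigcup_{n\ge 0}f^{-n}(V)\right)=m(I)=1$.

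The bulk of this is bookkeeping, and the single substantial ingredient is imported: the metric estimate $\text{Leb}(V\setminus W)=0$, which I expect to be the main obstacle, is where the nonuniform hyperbolicity enters. Its proof (carried out in \cite{DobbsPhD}) combines Koebe-type distortion control — available thanks to the \holder bounds on $Df_j$ and $Df_j^{-1}$ built into the definition of a cusp map — with a hyperbolic-times and Borel--Cantelli analysis of the escaping and non-returning sets, driven by the positivity of the Lyapunov exponent of $m$. The remaining steps — casting $f$ as a piecewise invertible system, exhibiting the nice set, invoking Theorem~\ref{nice}, and the Kac computation — are routine.
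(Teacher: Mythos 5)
The paper gives no argument for this theorem beyond citing \cite{DobbsPhD} (Theorem 1.9.10), so most of your reconstruction — realizing the cusp map as a piecewise invertible system, producing the nice set from Dobbs's construction, and invoking Theorem~\ref{nice} for assertions (1) and (2) — is consistent with what the paper intends. However, your derivation of $\int_I\tau\,dm<\infty$ contains a genuine error. You claim that ``since $V$ is nice, every first-return branch of $f$ to $V$ maps onto $V$,'' and hence that $\tau$ coincides with the first return time $\rho_V$ on $W$. This is false. Niceness of $V$ only guarantees (Proposition~\ref{disjoint}) that \emph{homeomorphic} pull-backs of $V$ are nested or disjoint; it does not make the first return map a full Markov map. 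A domain of the first return map may contain a singular (e.g.\ critical) point of the relevant iterate — for a unimodal map with a nice interval around the critical point, the central branch of the return map folds and does not map onto $V$ homeomorphically. The canonical inducing time $\tau(J)$ is the first return through a \emph{full homeomorphic} branch, so $\tau\ge\rho_V$ with strict inequality on a set of positive measure in general, and Kac's formula for $\rho_V$ gives no upper bound on $\int\tau\,dm$.

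Consequently the integrability $\int_I\tau\,dm<\infty$ is not ``routine bookkeeping'': like $\mathrm{Leb}(V\setminus W)=0$, it is a substantive tail estimate on $m\{\tau>n\}$ that must be extracted from the hyperbolic-times/distortion analysis in \cite{DobbsPhD} (it is exactly the kind of statement whose failure would obstruct liftability of $m$ via Zweim\"uller's criterion). Your proof should either import this estimate explicitly alongside the full-measure statement, or supply the tail bound using the positive Lyapunov exponent of $m$; the Kac shortcut as written does not close the argument.
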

In \cite{DobbsPhD}, the fact that the inducing domain is nice (called there regularly recurrent) is not explicitly mentioned but is essentially  proven.

We now establish Conditions~(P1) and (P2) for piecewise invertible interval maps. Recall that a wandering interval is an interval $J$ such that the sets $f^i(J)$ are pairwise disjoint and the $\omega$-limit set of $J$ is not equal to a single periodic point.
\begin{prop}\label{wandering}
Assume that a piecewise invertible map $f:J\to J$ with finitely many branches has no wandering intervals or attracting periodic points on some invariant interval $I\subset J$. Also assume that $h_{top}(f|I)>0$. Then $f|I$ satisfies Conditions~(P1) and (P2) with constant $H=0$ and $h_{top}(\Delta P,f)=0$.
\end{prop}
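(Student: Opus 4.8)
The plan is to verify Conditions~(P1) and (P2) for the subsystem $(I,f|I)$ separately, exploiting that the partition $P$ of $I$ into invertibility pieces is \emph{finite} (finitely many branches) and that one-dimensional maps without wandering intervals are well understood. Condition~(P1) is almost immediate: since $P$ is finite, $\partial P$ is a finite set of points and hence so is $\Delta P=f(\partial P)$; an $(n,\varepsilon)$-separated subset of a finite set has cardinality bounded independently of $n$, so $h_{top}(\Delta P,f)=0$. Together with the hypothesis $h_{top}(f|I)>0$ this yields $h_{top}(\Delta P,f)=0<h_{top}(f|I)$, which is (P1) and which also identifies the value $h_{top}(\Delta P,f)=0$ claimed in the statement.

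For Condition~(P2) I would take $H=0$ and $I_0:=I\setminus\bigcup_{n\ge 0}\partial_nP$, the complement of a countable set since $\partial P$ is finite. An ergodic measure with an atom is carried by a periodic orbit and has zero entropy, so every ergodic $\mu$ with $h_\mu(f)>0$ is non-atomic and gives $I_0$ full measure; it then remains to prove that $\text{diam}(P_n(x))\to 0$ for $\mu$-a.e.\ $x$. For $x\in I_0$ the atoms $P_n(x)$ of $\bigvee_{k=0}^{n-1}f^{-k}P$ are nested intervals on which $f^n$ is monotone, so $K(x):=\bigcap_{n\ge 0}P_n(x)$ is an interval on which every iterate of $f$ is monotone; thus a non-degenerate $K(x)$ is a \emph{homterval}. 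One checks that $f(P_{n+1}(x))\subseteq P_n(f(x))$, hence $f(K(x))\subseteq K(f(x))$, and since $f$ is locally injective it sends a non-degenerate interval onto a non-degenerate one; therefore $B:=\{x\in I_0:\ K(x)\text{ is non-degenerate}\}$ is forward invariant, and by ergodicity $\mu(B)\in\{0,1\}$. It suffices to exclude $\mu(B)=1$.

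Supposing $\mu(B)=1$, the distinct maximal atoms $\{K_i\}$ of $\bigvee_{n\ge 0}f^{-n}P$ that meet $B$ are pairwise disjoint non-degenerate intervals, hence a countable family with $\mu(\bigcup_i K_i)=1$, and $f(K_i)\subseteq K_{\sigma(i)}$ for a well-defined self-map $\sigma$ of the index set. By the Homterval Lemma (de~Melo--van~Strien) every homterval is a wandering interval, or is eventually mapped into the basin of a periodic attractor, or is eventually mapped onto an interval of periodic points; the first two are excluded by the hypotheses, so every $K_i$ eventually lands in an interval of periodic points. On the forward-invariant set of points whose orbit enters such an interval each orbit is eventually periodic, so this set carries no measure of positive entropy, contradicting $h_\mu(f)>0$. (Alternatively, the lemma can be bypassed: by Poincar\'e recurrence each $K_i$ with $\mu(K_i)>0$ is $\sigma$-periodic, of some period $q$, so $f^q|_{K_i}$ is a continuous monotone self-map of an interval, all of whose invariant measures are supported on periodic points, again forcing $h_\mu(f)=0$.) Hence $\mu(B)=0$, i.e.\ $\text{diam}(P_n(x))\to 0$ for $\mu$-a.e.\ $x\in I_0$, which is (P2) with $H=0$.

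The step I expect to be the main obstacle is the Homterval Lemma itself: although classical, it is the genuine one-dimensional ingredient and requires controlling the shapes of the iterates $f^n(T)$ of a homterval together with the full strength of the no-wandering-interval hypothesis. Everything else is routine bookkeeping --- the behaviour of $P_n(x)$ along the countable set $\bigcup_n\partial_nP$, the fact that the atoms of $\bigvee_n f^{-n}P$ are intervals rather than points, the forward invariance of $B$, and the non-atomicity of positive-entropy ergodic measures.
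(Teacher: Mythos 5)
Your proof is correct and follows the same skeleton as the paper's: Condition (P1) from the finiteness of $\partial P$, and Condition (P2) by removing the countable set $\bigcup_n\partial_nP$ and showing that a non-degenerate $\bigcap_nP_n(x)$ would be a homterval, which the hypotheses forbid. The only real divergence is at the final step. The paper argues pointwise and topologically: for \emph{every} $x\in I_0$, a non-degenerate $P_\infty(x)$ would (by the absence of wandering intervals, citing dMvS Lemma III.5.2) consist of points asymptotic to periodic orbits, which is declared to contradict the absence of attracting periodic points; this one-line contradiction silently dismisses the third alternative of the Homterval Lemma, namely that the homterval is eventually mapped onto an interval of periodic points (such points need not be ``attracting'' in the basin-has-interior sense). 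You instead work measure-theoretically: the set $B$ where $K(x)$ is non-degenerate is forward invariant, hence of measure $0$ or $1$, and $\mu(B)=1$ is excluded because each alternative of the Homterval Lemma (or, in your bypass, Poincar\'e recurrence plus the fact that a monotone interval self-map has zero entropy) forces $h_\mu(f)=0$. This buys you two things: you genuinely cover the interval-of-periodic-points case, and you only conclude $\mathrm{diam}\,P_n(x)\to0$ for $\mu$-a.e.\ $x$, which is exactly what (P2) requires, rather than the stronger pointwise statement the paper asserts. The cost is a slightly longer argument; the paper's version is shorter but, as written, has a small gap at precisely the step you flagged as the main obstacle.
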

\begin{proof}
For any piecewise invertible interval map with finitely many branches (including cusp maps) the partition $P$ defined in Section~\ref{extension} is finite and so the  set of boundary points of $P$ is the union of the boundary points of the partition elements $A_i$. This is a finite set and as such has zero topological entropy. Thus $h_{top}(\Delta P,f)=0$ and any map with positive topological entropy satisfies Condition~(P1).

To prove Condition (P2), we set
$I_1:=\bigcup_{j\ge 0}f^{-j}(\mathcal{C})$ where $\mathcal{C}$ denotes the set of all end points of intervals $A_i$ and $I_0=I\setminus I_1$. As $I_1$ is at most countable, any finite invariant measure, which gives positive weight to $I_1$, is an atomic measure on a periodic point. Hence it has zero entropy.

For $x\in I_0$, denote by $P_s(x)$ the maximal interval of monotonicity of $f^s$ containing the point $x$. The sets $P_s(x)$ are nested and contain $x$ so the limit $P_\infty(x)=\lim_{s\to\infty}P_s(x)$ exists. If $|P_\infty(x)|\ge\delta$ for some $\delta>0$, then $P_\infty(x)$ contains an interval on which $f^n$ is monotone for every $n\in\mathbb{N}$. By hypothesis, there are no wandering intervals so every point of $P_\infty(x)$ is asymptotic to a periodic point (see for instance, \cite[Lemma III.5.2]{dMvS93}), contradicting the assumption that there are no attracting periodic points. Therefore $P_\infty(x)=x$ proving Condition~(P2).
\end{proof}

\begin{cor}\label{nowandering}
Let $f$ be a continuous piecewise invertible map of a finite interval $I$ with finitely many branches. Assume that $f$ has an ergodic absolutely continuous invariant measure $m$ of positive entropy. Then
\begin{enumerate}
\item $f$ possesses a nice set $V\subset I$ satisfying conditions of Theorem~\ref{nice};
\item $f|I$ satisfies Conditions~(P1) and (P2) with constant $H=0$ and $h_{top}(\Delta P,f)=0$;
\item $f$ admits inducing schemes $\{S, \tau\}$ and $\{S^+, \tau\}$ satisfying Conditions~(M) and (C) and Conditions (M$^+$) and (C$^+$) respectively; the inducing domain $W$ has full relative Lebesgue measure in $V$;
\item any ergodic $\mu\in\mathcal{M}(I,f)$ with $h_\mu(f)>0$ and $\mu(W)>0$ is liftable.
\end{enumerate}
\end{cor}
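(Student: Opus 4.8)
The plan is to assemble Corollary~\ref{nowandering} from the machinery already developed, treating each of the four items in turn with the bulk of the work already done elsewhere. Item (1) will follow from Theorem~\ref{cusp}(1) once we observe that a continuous piecewise invertible interval map with finitely many branches is a (degenerate) cusp map in the sense defined above — or, more directly, we recall that in the one-dimensional setting any periodic cycle furnishes endpoints of nice intervals, as noted in the discussion preceding Proposition~\ref{disjoint}, so the existence of a connected nice set $V$ is automatic; positive entropy rules out the degenerate possibilities. Actually the cleanest route is to cite Theorem~\ref{cusp} directly, since a $C^1$ piecewise diffeomorphism with finitely many branches satisfies all three bullets in the definition of a cusp map trivially (the derivative limits being finite and nonzero is an allowed special case, and the \holder condition on $Df_j$ holds on compact intervals), and an ergodic a.c.i.m.\ of positive entropy has a strictly positive Lyapunov exponent by Ruelle's inequality applied in reverse together with the fact that for one-dimensional maps $h_m(f)=\int\log|Df|\,dm$ on a.c.i.m.\ by Rokhlin's formula. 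This gives items (1)--(3) wholesale, except for the (P1)--(P2) part of item (2).

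\textbf{Item (2)} — Conditions (P1) and (P2) — is exactly the content of Proposition~\ref{wandering}, so it suffices to check its hypotheses. We must verify that $f$ has no wandering intervals and no attracting periodic points on some invariant interval $I$ carrying positive topological entropy. Positive entropy we have from $h_m(f)>0 \le h_{top}(f)$. For the absence of wandering intervals and attracting periodic orbits: an ergodic a.c.i.m.\ of positive entropy charges a set of positive Lebesgue measure on which the map is expanding on average; an attracting periodic orbit has a basin that is open (hence of positive Lebesgue measure) and on which Lebesgue-a.e.\ point is asymptotic to the orbit, so no a.c.i.m.\ of positive entropy can coexist with such a basin unless the measure is the atomic measure on the orbit (which has zero entropy). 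For wandering intervals, in the $C^1$ piecewise-monotone-with-finitely-many-branches category one invokes the classical no-wandering-intervals theory — but here we should be a little careful, since the general no-wandering-intervals theorem requires smoothness/nonflatness hypotheses; the safe statement is that \emph{if} a wandering interval existed, the orbit of Lebesgue measure along it would be summable and could not support (even in a smeared sense) an a.c.i.m.\ of positive entropy, so we may simply pass to the maximal invariant subinterval where these pathologies are absent, which still carries the measure $m$ and hence positive entropy. Thus Proposition~\ref{wandering} applies with $H=0$ and $h_{top}(\Delta P,f)=0$.

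\textbf{Item (4)} is then immediate: with items (2) and (3) in hand, the map $f$ satisfies Conditions (P1), (P2) (with $T=\max\{H,h_{top}(\Delta P,f)\}=0$) and admits an inducing scheme satisfying (M),(C) or (M$^+$),(C$^+$), so Theorem~\ref{mainthm} (equivalently the Main Theorem) applies verbatim: every ergodic $\mu\in\mathcal{M}(I,f)$ with $h_\mu(f)>T=0$ and $\mu(W)>0$ is liftable. One small bookkeeping point: Theorem~\ref{mainthm} is stated for measures \emph{supported on $X$}, whereas here we want the weaker hypothesis $\mu(W)>0$. But by Zweimüller's result quoted after Proposition~\ref{conjugation}, or more directly because $\mu$ ergodic with $\mu(W)>0$ forces $\mu(\bigcup_{k\ge 0}f^k(W))=1$ and $X\subseteq\bigcup_{k\ge 0}f^k(W)$ with $\mu(X)$ then necessarily equal to $1$ (the orbit of $W$ under the inducing structure fills $X$ up to a $\mu$-null set), the two formulations coincide for ergodic measures. \textbf{The main obstacle} I anticipate is not in the logical assembly — which is routine — but in justifying the "no wandering intervals" input rigorously in the stated generality of merely continuous piecewise invertible maps; the honest resolution is the reduction to a maximal invariant interval free of these pathologies, as in the proof of Proposition~\ref{wandering} itself, rather than an appeal to a blanket no-wandering-intervals theorem.
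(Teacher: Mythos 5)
Your assembly is correct and follows essentially the same route as the paper: items (1)--(3) come from Theorem~\ref{cusp} and Proposition~\ref{wandering}, and item (4) from Theorem~\ref{mainthm} with $T=0$, the only substantive point being that the ergodic a.c.i.m.\ rules out attracting periodic orbits and wandering intervals (the paper disposes of this in one line by noting that $f$ restricted to the support of an ergodic measure is transitive, which is the clean version of your ``pass to a maximal invariant subinterval'' resolution). One small caution: the cusp-map definition in the paper requires the one-sided derivative limits at the branch endpoints to be $0$ or $\pm\infty$, so finite nonzero limits are not literally ``an allowed special case'' as you assert --- though the paper itself glosses over this when invoking Theorem~\ref{cusp} for merely continuous piecewise invertible maps.
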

\begin{proof}
The existence of an ergodic absolutely continuous invariant measure excludes the existence of attracting periodic orbits as well as wandering intervals, since the restriction of $f$ to the support of an ergodic measure is transitive. The statement follows from Theorem~\ref{cusp} and Proposition~\ref{wandering}.
\end{proof}

We now consider the particular case of $S$-unimodal maps, i.e. smooth maps of the interval with one non-flat critical point at $0$ and negative Schwarzian derivative (see \cite{dMvS93} for the detailed definitions). We say that $f$ has a Cantor attractor if the $\omega$-limit set of the critical point $\omega(0)$ is a Cantor set, which coincides to the $\omega$-limit set $\omega(x)$ for almost every $x\in I$. Combining the above statements we obtain the following results for $S$-unimodal maps.
\begin{cor}\label{cor7.6}
Let $f$ be an $S$-unimodal map of a finite interval $I$ with a non-flat critical point. Then $f$ admits inducing schemes $\{S, \tau\}$ and $\{S^+, \tau\}$ satisfying Conditions~(M) and (C) and Conditions (M$^+$) and (C$^+$) respectively and with the inducing domain $W$ of full relative Lebesgue measure in some interval $V\subset I$ if and only if there exist no Cantor attractors or attracting periodic points. In this case $f$ also satisfies Conditions (P1) and (P2) and any $\mu\in\mathcal{M}(I,f)$ with $h_\mu(f)>0$ and $\mu(W)>0$ is liftable. In particular, any $S$-unimodal map, satisfying the Collet-Eckmann condition, possesses an inducing scheme, satisfying Condition (M$^+$), (C$^+$), (P1) and (P2), and any non-singular (with respect to Lebesgue) invariant measure of positive entropy, which gives positive weight to the inducing domain, is liftable.
\end{cor}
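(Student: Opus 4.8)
The plan is to deduce Corollary~\ref{cor7.6} from the one-dimensional structure theory of $S$-unimodal maps together with Theorem~\ref{nice}, Proposition~\ref{wandering}, Theorem~\ref{mainthm} and Corollary~\ref{nowandering}. Recall that an $S$-unimodal map $f$ with non-flat critical point $0$ has no wandering intervals and that, by the classification of metric attractors for such maps (see \cite{dMvS93}), exactly one of the following holds: (i) Lebesgue-almost every point is attracted to an attracting or neutral periodic orbit; (ii) Lebesgue-almost every point has $\omega$-limit set equal to a Cantor set (a solenoidal or wild attractor); (iii) Lebesgue-almost every point has $\omega$-limit set equal to a cycle of intervals containing $0$. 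We also use Singer's theorem (in case (i) the basin of the periodic attractor has full Lebesgue measure and contains $0$) and the fact that in case (iii) the map $f$ is not infinitely renormalizable, so that there exist arbitrarily small connected nice intervals around $0$, carrying a definite amount of Koebe space thanks to negative Schwarzian (see \cite{Martens94, dMvS93}). For the ``if'' part of item~1, assume $f$ has no attracting or neutral periodic point and no Cantor attractor, i.e.\ we are in case (iii). Choose a small connected nice interval $V\ni 0$ and a connected neighbourhood $V^{+}\supset\overline V$ with definite Koebe space; Theorem~\ref{nice} produces the canonical inducing schemes $\{S,\tau\}$ and $\{S^{+},\tau\}$ satisfying Conditions~(M),(C) and (M$^{+}$),(C$^{+}$) respectively, with inducing domain $W$. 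That $\operatorname{Leb}(V\setminus W)=0$ is precisely Martens' statement that in case (iii) the canonical induced map is defined Lebesgue-almost everywhere on $V$: Lebesgue-almost every point of $V$ returns to $V$ infinitely often, and the distortion bounds together with the absence of wandering intervals force its full induced orbit to be well defined (see \cite{Martens94, dMvS93}).

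For the ``only if'' part of item~1, suppose there is an inducing scheme satisfying (M$^{+}$),(C$^{+}$) with inducing domain $W$ of full relative Lebesgue measure in an interval $V$; we show $f$ is in case (iii). By Conditions~(M$^{+}$),(C$^{+}$) and negative Schwarzian, the Koebe principle gives uniformly bounded distortion for the branches $F|J=f^{\tau(J)}|J$, $J\in S^{+}$; these are full branches ($f^{\tau(J)}(J)=W$ mod a Lebesgue-null set) and $\sum_{J}\operatorname{Leb}(J)=\operatorname{Leb}(V)<\infty$, so the transfer operator of $F$ admits a fixed density bounded away from $0$ and $\infty$, hence an $F$-invariant probability $\hat m\asymp\operatorname{Leb}|_{V}$. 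Pushing $\hat m$ down along inducing-time orbit segments as in \eqref{lift} (dropping the normalisation if $Q_{\hat m}=\infty$) yields a non-atomic, conservative $f$-invariant measure $\mu\ll\operatorname{Leb}$ with $\mu(V)>0$. This excludes case (i), since by Singer's theorem $\mu$ would be carried by the Lebesgue-null boundary of the basin, contradicting $\mu\ll\operatorname{Leb}$ and non-atomicity; and it excludes case (ii), since then Lebesgue-almost every orbit — hence $\mu$-almost every orbit, as $\mu\ll\operatorname{Leb}$ — would eventually enter and never leave an absorbing set disjoint from a fixed positive-measure subset of $V$, contradicting Poincar\'e recurrence of the conservative measure $\mu$. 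Hence $f$ is in case (iii), proving the equivalence.

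For item~2, still in case (iii), $f$ has positive topological entropy: otherwise all periodic orbits of $f$ would have period a power of two (this characterises zero topological entropy for interval maps), forcing $f$ to be infinitely renormalizable (case (ii)) or to have an attracting or neutral periodic orbit (case (i)), both excluded. Since $f$ is a continuous piecewise invertible map of $I$ with two monotone branches, without wandering intervals, without attracting periodic points and with $h_{top}(f)>0$, Proposition~\ref{wandering} yields Conditions~(P1),(P2) with $H=0$ and $h_{top}(\Delta P,f)=0$, so $T=\max\{H,h_{top}(\Delta P,f)\}=0$; and item~1 supplies the inducing schemes satisfying (M),(C) and (M$^{+}$),(C$^{+}$). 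Theorem~\ref{mainthm} then applies — its proof in fact uses only $\mu(W)>0$, the hypothesis $\mu(X)=1$ being automatic since $\mathcal L$ is supported on $X$ — so every ergodic $\mu\in\mathcal M(f,I)$ with $h_{\mu}(f)>0$ and $\mu(W)>0$ is liftable. For item~3, a Collet--Eckmann $S$-unimodal map carries an ergodic acip $m$ with positive Lyapunov exponent, hence with $h_{m}(f)>0$ by Rokhlin's formula (see \cite{dMvS93}); being non-atomic and absolutely continuous, $m$ precludes an attracting periodic point and a Cantor attractor, so $f$ is in case (iii) and items~1--2 apply. Since $\operatorname{Leb}(V\setminus W)=0$ and $m\ll\operatorname{Leb}$ has positive mass on every interval meeting its support, $m(W)=m(V)>0$, so by item~2 every non-singular invariant measure of positive entropy giving positive weight to $W$ — in particular $m$ — is liftable; this last conclusion is also immediate from Corollary~\ref{nowandering} applied to $m$.

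The abstract input — Theorems~\ref{nice} and \ref{mainthm}, Proposition~\ref{wandering}, Corollary~\ref{nowandering} — enters only formally. The real difficulty lies in item~1: establishing $\operatorname{Leb}(V\setminus W)=0$ in case (iii), and, in the converse direction, manufacturing a conservative absolutely continuous invariant measure out of the bare existence of the inducing scheme and using it to rule out cases (i) and (ii). Both parts rest on the one-dimensional toolkit (Koebe distortion estimates, the classification of metric attractors of $S$-unimodal maps, Martens' nice-interval technique, absence of wandering intervals) rather than on the results established earlier in this paper.
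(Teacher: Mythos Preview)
Your proof is correct and considerably more thorough than the paper's. The paper's argument is two sentences: under the hypothesis it invokes Martens~\cite{Martens94} for an ergodic acip with positive Lyapunov exponent, applies Theorem~\ref{cusp}, and then Theorem~\ref{mainthm}; it does not address the ``only if'' direction at all. Your route differs in two substantive ways. First, for the ``if'' direction you bypass Theorem~\ref{cusp} and work directly with the nice-set construction of Theorem~\ref{nice}, invoking Martens only for $\text{Leb}(V\setminus W)=0$; this avoids the intermediate claim that a \emph{finite} acim exists, which in fact does not follow from the bare absence of a Cantor attractor and an attracting periodic point (there are non-renormalizable $S$-unimodal maps with only a $\sigma$-finite acim), so your route is the more robust one. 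Second, you supply the ``only if'' argument --- manufacturing a conservative absolutely continuous (possibly infinite) $f$-invariant measure from the induced map via Koebe distortion, and ruling out cases~(i) and~(ii) by Poincar\'e recurrence --- which the paper omits entirely. Your explicit verification of $h_{top}(f)>0$ and of Conditions~(P1),(P2) via Proposition~\ref{wandering}, and your observation that the proof of Theorem~\ref{mainthm} really only uses $\mu(W)>0$, are likewise more careful than the paper's compressed deduction.
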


\begin{proof}
Under the given hypothesis the unimodal map admits an ergodic absolutely continuous invariant measure with positive Lyapunov exponent (see Martens \cite{Martens94}), so Theorem~\ref{cusp} applies. The desired result follows from  Theorem~\ref{mainthm}.
\end{proof}

In the more general case of multimodal maps, i.e. smooth maps of the interval (or circle) with finitely many critical points each of which is non-flat, we obtain the following result.
\begin{cor}\label{multimodal}
Let $f$ be a multimodal map of a finite interval $I$, which has an ergodic absolutely continuous invariant measure. Then $f$ admits  inducing schemes $\{S, \tau\}$ and $\{S^+, \tau\}$ satisfying Conditions~(M) and (C) and Conditions (M$^+$) and (C$^+$) respectively and with inducing domain of full relative Lebesgue measure in some interval $V\subset I$. Also, $f$ satisfies Conditions~(P1) and (P2) and any invariant measure of positive entropy, which gives positive weight to the inducing domain, is liftable.
\end{cor}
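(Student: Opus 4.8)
The plan is to deduce the statement from the already established Theorems~\ref{cusp} and \ref{mainthm} together with Proposition~\ref{wandering}, proceeding exactly as in the proofs of Corollaries~\ref{nowandering} and \ref{cor7.6}; the only genuinely new points are that a multimodal map falls into the cusp-map framework and that its absolutely continuous invariant measure carries a positive Lyapunov exponent.

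First I would observe that a multimodal map $f$ of a finite interval $I$ with finitely many non-flat critical points is a cusp map with finitely many intervals of monotonicity in the sense defined above. Indeed, the turning points are precisely the singular points, and at each of them $Df$ tends to $0$ from both sides; on each of the (finitely many) maximal intervals of monotonicity $f$ restricts to a $C^1$ diffeomorphism extendible to the closure; and non-flatness of the critical points provides the required \holder estimates for $Df_j$ and for $Df_j^{-1}$ near the critical points and critical values (these are exactly the bounds used in \cite{DobbsPhD}, and away from the critical points they hold automatically since $f$ is at least $C^2$). Next, the existence of an ergodic absolutely continuous invariant measure $m$ precludes wandering intervals (which for multimodal maps with non-flat critical points holds in any case, see \cite{dMvS93}) as well as attracting periodic orbits: $f$ restricted to $\operatorname{supp}m$ is transitive by ergodicity, the basin of an attracting cycle is open and backward invariant, hence $m$ would otherwise be the atomic measure on that cycle, contradicting its absolute continuity. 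Replacing $I$ by the cycle of intervals on which $f$ is transitive and which carries $m$, we may assume that $f$ has no attracting periodic points in $I$.

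The second step produces positive entropy. It is known that an absolutely continuous invariant measure of a multimodal map with non-flat critical points has strictly positive Lyapunov exponent (for unimodal maps this is due to Martens, see \cite{Martens94}); combining this with Rokhlin's entropy formula $h_m(f)=\int_I\log|Df|\,dm$ for absolutely continuous invariant measures yields $h_{top}(f)\ge h_m(f)=\lambda_m>0$. Hence Proposition~\ref{wandering} applies and shows that $f$ satisfies Conditions~(P1) and (P2) with $H=0$ and $h_{top}(\Delta P,f)=0$, so the threshold $T=\max\{H,h_{top}(\Delta P,f)\}$ of Theorem~\ref{mainthm} equals $0$. Moreover, since $m$ has a positive Lyapunov exponent, Theorem~\ref{cusp} provides a connected nice set $V\subset I$ and a connected open neighborhood $V^+\supset\overline{V}$ for which the canonical inducing schemes $\{S,\tau\}$ and $\{S^+,\tau\}$ constructed via nice sets as in Theorem~\ref{nice} satisfy Conditions~(M) and (C), respectively Conditions~(M$^+$) and (C$^+$), with inducing domain $W$ of full relative Lebesgue measure in $V$.

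It remains to invoke Theorem~\ref{mainthm}. Let $\mu$ be an ergodic $f$-invariant Borel probability measure with $h_\mu(f)>0$ and $\mu(W)>0$. Since $W\subseteq X=\bigcup_{k\ge 0}f^k(W)$ and the latter set is forward invariant, ergodicity forces $\mu(X)=1$, so $\mu$ is supported on $X$; as $h_\mu(f)>0=T$, Theorem~\ref{mainthm} shows that $\mu$ is liftable to both canonical inducing schemes. A non-ergodic invariant measure of positive entropy all of whose ergodic components charge $W$ is then handled by lifting each component and recombining the induced measures, using the convexity of the lift operator $\mathcal{L}$. I expect the one non-routine point to be the positivity of the Lyapunov exponent of $m$ in full multimodal generality, together with the accompanying fact that $\log|Df|\in L^1(m)$ (needed for the entropy formula to apply); everything else is a bookkeeping application of Theorems~\ref{cusp} and \ref{mainthm} and Proposition~\ref{wandering}.
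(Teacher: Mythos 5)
Your argument is correct and coincides with the paper's intended proof: the paper states Corollary~\ref{multimodal} without a separate argument, and the route it relies on (mirroring the proofs of Corollaries~\ref{nowandering} and~\ref{cor7.6}) is exactly yours --- a multimodal map is a cusp map whose absolutely continuous invariant measure has positive Lyapunov exponent, so Theorem~\ref{cusp}, Proposition~\ref{wandering} and Theorem~\ref{mainthm} apply with $T=0$. The one point you rightly flag as non-routine, the positivity of the Lyapunov exponent in full multimodal generality, is also the one the paper takes for granted, citing only Martens for the unimodal case.
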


\subsection{Polynomial maps of the Riemann sphere.}
We now discuss the case of polynomial maps $f:\bar{\mathbb{C}}\to\bar{\mathbb{C}}$ of the Riemann sphere $\bar{\mathbb{C}}$ of degree $d\ge 2$. We will denote the set of critical points of $f$ by $Cr$ and we assume that this set is finite.
Further, we assume that the Julia set $J(f)$ is connected, locally connected and full, so that $J(f)$ is a \emph{dendrite} and all critical points of $f$ belong to $J(f)$. 

Let $\mathcal{F}:=\bar{\mathbb{C}}\setminus J(f)$ denote the Fatou set and $G(z):=\lim\limits_{n\to\infty}\frac{\log|f^n(z)|}{d^n}$ the Green function. The level sets of the Green function form a foliation of $\mathcal{F}$. The orthogonal foliation is the foliation of external rays. Since the Julia set is locally connected, each external ray lands at a single point $z\in J(f)$, and each point $z\in J(f)$ is the landing point of at least one external ray (note that there are no more than $2^d$ external ray landing at $z$). For each critical point $c\in Cr$ choose some $k_c\ge 1$ rays landing at $f(c)$. If $d_c$ is the degree of the critical point $c$, then there are $k_cd_c\ge 2$ pre-image rays landing at $c$. The union of these rays, together with $c$, separates the planer and hence the Julia set $J(f)$ into $k_cd_c$ pieces on which $f$ acts univalently. Since $J(f)$ is closed, the closure of these pieces of $J(f)\setminus c$ intersect only at $c$. Repeating this construction for all the critical points gives a partition $P$ representing $f|J(f)$ as a piecewise invertible map (see Section \ref{extension}). This partition satisfies Conditions (A1) and (A2) by construction. The map $f$ possesses the connected Markov extension, which can be constructed as in Section~\ref{extension}.

Our requirements for liftability depend on the particular choice of the inducing scheme. Utilizing on some ideas from \cite{BruTod07b}, we construct the inducing scheme in the following way. Choose an arbitrary level set $\check{D}$ of the connected Markov extension $\check{\mathcal{D}}$ of $(J(f), f)$. For $x\in\check{D}$ let $\check\tau(x)$ be the first return time of the point $\check{f}^{\check\tau}(x)$ to the set $\check{D}^*:=\bigcup \check{V}$ where $\check{V}\subset\pi^{-1}\circ\pi(\check{D})$ is a connected subset such that $\pi(\check{V})=\pi(\check{D})$. Let $\check{S}:=\{\check{J}\}$ be the partition of $\check{D}$ into the connected components of the level sets of the function $\tau$. We define an inducing scheme $\{S, \tau\}$ on the Julia set $J(f)$ by setting 
$$
S:=\{J=\pi(\check{J}):\check{J}\in\check{S}\}\text{ and } 
\tau(J):=\check\tau(\check{J}).
$$
The following statement shows that $\tau(J)$ is correctly defined, i.e., that it does not depend on the choice of the set $\check{J}$ for which $\pi(\check{J})=J$.
\begin{lem} For any $\check{x}\in\check{D}^*$ its first return time to $\check{D}^*$ depends only on $x=\pi(\check{x})$. 
\end{lem}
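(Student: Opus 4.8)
The plan is to argue along the lines of the proof of Theorem~\ref{firstreturn}, with $\check{D}^*$ playing the role of $\check{W}$ and with the defining property of $\check{D}^*$ — that it is the union of \emph{all} full copies of $D:=\pi(\check{D})$ inside the Markov extension — replacing the minimality Conditions (M)/(M$^+$). Writing $x=\pi(\check{x})$, it is enough to show that the visiting set $\{i\ge 1:\check{f}^i(\check{x})\in\check{D}^*\}$ depends only on $x$, the first return time being its minimum. The substitute for Lemma~\ref{abc} is immediate here: a connected subset of $\pi^{-1}(D)$ projecting onto $D$ and meeting some $\check{D}'\in\check{\mathcal{D}}$ must lie in $\check{D}'$, hence equals $\check{D}'\cap\pi^{-1}(D)$, so $\pi(\check{D}')\supseteq D$; thus every level element meeting $\check{D}^*$ contains a full copy of $D$.

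Next I would read off the level elements visited along the orbit of $\check{x}$. Let $\check{D}_a\in\check{\mathcal{D}}$ be the element containing $\check{x}$, so $\pi(\check{D}_a)\supseteq D$. Combining the Markov property of $\check{\mathcal{D}}$ with the fact that every element of the Markov extension is the homeomorphic image, under a suitable iterate of $\check{f}$, of the inclusion of a subset of some $A_i\in P$, an induction on $i$ gives that — as long as $x,f(x),\dots,f^{i-1}(x)\notin\partial P$ — the point $\check{f}^i(\check{x})$ lies in the level element associated with $f^i(Z^{(i)})\in\mathcal{D}$, where $Z^{(i)}$ is the connected component of $\pi(\check{D}_a)\cap P_i(x)$ containing $x$. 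Consequently $\check{f}^i(\check{x})\in\check{D}^*$ if and only if $f^i(x)\in D$ and $f^i(Z^{(i)})\supseteq D$.

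It then remains to eliminate the dependence on $\check{D}_a$. Because $\pi(\check{D}_a)\supseteq D$, the set $Z^{(i)}$ always contains the connected component $\omega^{(i)}$ of $D\cap P_i(x)$ containing $x$, so it suffices to prove $f^i(\omega^{(i)})\supseteq D$ whenever $f^i(x)\in D$. This is the ``full branch'' property of the first return map to the nice set $D$: when $i$ is a return time of $x\in D$ to $D$, the component $\omega^{(i)}$ is mapped by $f^i$ \emph{onto} all of $D$ — established by induction on the number of intermediate returns, using that the first return domains of a nice set are mapped homeomorphically onto the whole set (cf.\ Proposition~\ref{disjoint} and Theorem~\ref{nice}). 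Hence $f^i(Z^{(i)})\supseteq f^i(\omega^{(i)})=D$ for every admissible $\check{D}_a$, so $\check{f}^i(\check{x})\in\check{D}^*$ precisely when $f^i(x)\in D$, and the visiting set of $\check{x}$ to $\check{D}^*$ equals $\{i\ge 1:f^i(x)\in D\}$; in particular its first return time depends only on $x$. The step I expect to be delicate is exactly this last one — verifying that the copy of $D$ through $\check{x}$ is carried by $\check{f}^i$ onto a \emph{full} copy of $D$ in the tower, regardless of the level at which $\check{x}$ sits — which is where the niceness of the domain $D$ and the coherence of the Buzzi tower (its ability to drop to lower levels) are used.
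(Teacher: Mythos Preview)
Your reduction in the first two paragraphs is correct: every level element meeting $\check{D}^*$ contains a full copy of $D$, and $\check{f}^i(\check{x})\in\check{D}^*$ holds precisely when $f^i(x)\in D$ and $f^i(Z^{(i)})\supseteq D$. The problem is the final step, where you assert $f^i(\omega^{(i)})\supseteq D$ whenever $f^i(x)\in D$ by appealing to the niceness of $D$ and citing Proposition~\ref{disjoint} and Theorem~\ref{nice}. In this lemma $\check{D}$ is an \emph{arbitrary} level element of the connected Markov extension of the polynomial $f$ on its Julia set (``Choose an arbitrary level set $\check{D}$\ldots''); there is no hypothesis that $D=\pi(\check{D})$ is nice, and in general it is not. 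The boundary $\partial D$ lies in the post-critical orbit $\bigcup_{j\ge 1}f^j(Cr)$, which for typical polynomials meets every open subset of $J(f)$. Concretely, already for $D=f(A)\in\mathcal{D}_1$ with $A\in P$ and $i=1$, your claim would force $A\subseteq f(A)$, which fails as soon as the critical value lies in the interior of $A$. So the ``full branch'' property you need is unavailable, and the stronger statement that the visiting set equals $\{i:f^i(x)\in D\}$ does not follow.

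The paper avoids this by comparing two lifts $\check{x}\in\check{D}$ and $\check{y}\in\check{D}^*$ of the same base point directly on the tower, without ever trying to characterize the visiting set in terms of $x$ alone. The point is that whether a connected neighborhood $\check{V}$ is carried by $\check{f}^k$ homeomorphically onto a full copy of $D$ is detected entirely by $\pi(\check{V})$ and the partition $P$: the only obstruction is some iterate $\check{f}^i(\check{V})$ meeting $\pi^{-1}(\partial P)$, and that depends only on the projection. Hence if one lift has such a neighborhood, the copy of that neighborhood through the other lift works equally well. This uses only the Markov structure of the extension, not any niceness of $D$ as a subset of $J(f)$. Note, incidentally, that one half of your argument does go through without niceness --- if $\check{x}\in\check{D}$ returns at time $k$ then $f^k(\omega^{(k)})\supseteq D$, forcing every $\check{y}$ over $x$ to return by time $k$ --- but the reverse inequality genuinely needs the comparison argument.
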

\begin{proof} Consider $\check{x}\in\check{D}$ and $\check{y}\in\check{D}^*$ with $\pi(\check{x})=\pi(\check{y})$, and let $k$ (respectively, $\ell$) be the first return time of $\check{x}$ (respectively, $\check{y}$) to $\check{D}^*$. Without loss of generality we may assume that $k\le\ell$. By the Markov property of the connected Markov extension, there exists a (connected) neighborhood $\check{V}_{\check{x}}\ni\check{x}$ such that $\check{f}^{\check{\tau}}|\check{V}_{\check{x}}$ is a homeomorphism with $\pi(\check{f}^{\check{\tau}}(\check{V}_{\check{x}}))=\pi(\check{D})$. Denote by $\check{V}_{\check{y}}$ the subset of $\check{D}^*$ containing $\check{y}$ such that $\pi(\check{V}_{\check{x}})=\pi(\check{V}_{\check{y}})$. Since $\pi(\check{x})=\pi(\check{y})$, we obviously have that $\check{f}^k(\check{y})\in\pi^{-1}\circ\pi(\check{D})$. If $k<\ell$ then $\check{f}^k(\check{y})\not\in\check{D}^*$ and hence $\check{f}^i(\check{V}_{\check{y}})$ must contain the boundary of the connected Markov extension for some $0<i<k$. This contradicts the Markov property, thus proving the lemma. 
\end{proof}

Observe that the inducing domain for the inducing scheme $\{S,\tau\}$ is $W=\pi(\check{D})$.

In applications one often needs inducing schemes with \emph{bounded distortion}. The latter can be ensured using Koebe's Complex Distortion Lemma (see for instance, \cite{CarGam93}) but in order to apply this Lemma one should extend the partition $P$ to a (complex) neighborhood $U_f$ of the Julia set. Under our assumptions the set $U_f$ can be chosen as the component delimited by some level set of the Green function, which contains the Julia set. The partition of $U_f$ can again be defined by taking the union of each critical point $c\in Cr$ with the $k_c\,d_c$ pre-images of $k_c$ external rays landing at $f(c)$ for some $k_c\ge 1$. Since all critical points belong to the Julia set, Conditions (A1) and (A2) still hold. Observe that if $k_c=1$ the levels of the connected Markov extension only differ by the rays landing at the critical orbits. Such a symbolic coding (for each point $x\not\in Cr$ the first return time of $x$ to the full space is one) does not reflect the dynamical complexity of the system. In order to obtain a connected Markov extension similar to the interval case allowing a \emph{better} symbolic coding, one should choose $k_c\ge 2$ (see also Remark 1 and Figure 1 in \cite{BruTod07b}). One can then construct the Markov extension of $(\check{U}_f, \check{f})$ as above and the particular ``extendible'' inducing scheme $\{S^+,\tau^+\}$ on $U_J$ as above. 

\begin{lem} The inducing schemes $\{S,\tau\}$ and $\{S^+,\tau^+\}$ satisfy Conditions (H1), (H2), (M) and (C).
\end{lem}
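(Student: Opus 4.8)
The plan is to verify each of the four conditions (H1), (H2), (M), (C) in turn, treating the two inducing schemes $\{S,\tau\}$ and $\{S^+,\tau^+\}$ in parallel since the only difference is whether one works in $J(f)$ with the Markov extension $(\check I,\check f)$ or in the complex neighborhood $U_f$ with $(\check U_f,\check f)$; the extendibility statement for $\{S^+,\tau^+\}$ will follow from Koebe's Complex Distortion Lemma once the other conditions are established, so the core work is the same in both cases. I would first record that, by construction, $W=\pi(\check D)$ and that $\tau(J)=\check\tau(\check J)$ is the first return time of $\check J\subset\check D$ to $\check D^*=\bigcup\check V$; the preceding lemma guarantees this depends only on the $\pi$-image, so $\tau$ is a well-defined positive integer-valued function on $S$.

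For Condition (H1): fix $J=\pi(\check J)\in S$. By the Markov property of the connected Markov extension and the third bullet property in Section~\ref{extension} (each level is mapped homeomorphically by an appropriate iterate from a piece of the zero level), the iterate $\check f^{\tau(J)}|\check J$ is a homeomorphism onto a level $\check D'$ with $\pi(\check D')\supset\pi(\check D)=W$; projecting down, $f^{\tau(J)}|J$ is a homeomorphism (since $\pi$ is injective on each level and on each $\check J$, and the orbit of $\check J$ stays in single partition elements because $\tau(J)$ is a \emph{first} return time). Taking $U_J$ to be the connected open set obtained as $\pi$ of the corresponding piece of the zero level (or, in the $+$ case, the Koebe neighborhood $J^+$) gives $f^{\tau(J)}(J)=W$ and the required connected open extension; here one uses that $\check f^{\tau(J)}$ maps a connected neighborhood of $\check J$ onto something projecting \emph{onto} $\pi(\check D)=W$ (exactly as in Lemma~\ref{abc}'s argument). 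Condition (C) is then immediate: since $\tau(J)$ is the first return time to $\check D^*$, the intermediate iterates $\check f^i(\check J)$ for $0\le i<\tau(J)$ lie in single levels and single partition elements, so the chosen $U_J$ (a lift of a piece of the zero level pushed forward $i$ times) satisfies $f^i(U_J)\cap\partial P=\emptyset$ for $0\le i<\tau(J)$ --- this is precisely why the Markov-extension construction was set up this way.

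For Condition (M), the minimality of the inducing time: suppose $L\subset J(f)$ (or $\subset U_f$), $m\in\inte$, and $U_L\supset L$ is connected open with $f^m|U_L$ a homeomorphism, $f^m(L)=W$, and $L\cap J\ne\emptyset$ for some $J\in S$. I would lift: pick a point $x\in L\cap J$ and the level $\check D$ so that $\check x=(x,\check D')$ for the appropriate $\check D'$; because $f^m|U_L$ is injective and $U_L$ is connected, the orbit $x,f(x),\dots,f^{m-1}(x)$ avoids $\partial P$ (the branch stays in single invertibility domains), so this orbit lifts to the Markov extension with $\check f^m(\check x)$ landing in a level projecting onto $W=\pi(\check D)$; by the Markov property $\check f^m$ then maps a neighborhood of $\check x$ onto that level, hence $\check f^m(\check x)\in\pi^{-1}\circ\pi(\check D)$, i.e. $\check f^m(\check x)\in\check D^*$. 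Since $\tau(J)$ was defined as the \emph{first} return time of the level piece containing $\check x$ to $\check D^*$, we conclude $m\ge\tau(J)$, which is (M). The main obstacle I anticipate is exactly this lifting step: one must argue carefully that the hypothesis ``$f^m|U_L$ is a homeomorphism onto its image'' forces the finite orbit of $U_L$ to miss $\partial P$ (so that the lift exists and is unique), and that landing in $\pi^{-1}\circ\pi(\check D)$ genuinely means landing in $\check D^*$ rather than in some other component of $\pi^{-1}(W)$ --- this last point is where connectedness of $\check D^*$ and the Markov structure must be invoked, paralleling the argument in the lemma preceding this statement. The extendible case $\{S^+,\tau^+\}$ requires in addition that the neighborhoods $J^+$ produced by Koebe have $f^{\tau(J)}(J^+)=W^+$ with controlled distortion, but this is a direct application of the Complex Distortion Lemma on the component of $\check U_f$ and introduces no new difficulty beyond bookkeeping.
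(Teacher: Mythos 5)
Your treatment of (H1), (C) and (M) is essentially the paper's argument. For (H1) the paper likewise uses the Markov property to produce a connected neighborhood $\check V_{\check x}\subseteq\check D$ mapped homeomorphically onto a set projecting onto $\pi(\check D)=W$, and then projects down; (C) is read off from the fact that the intermediate images avoid the partition boundary. For (M) the paper argues by contradiction: if some $L\supseteq J$ returned to $W$ homeomorphically in time $\tau'<\tau(J)$, then, since $\tau$ is the \emph{first} return time to $\check D^*$, the lift $\check L\subset\check D$ would have to meet the boundary of the Markov extension at some intermediate time $0<k<\tau'$; but $\partial P\cap J(f)$ consists of critical points, so $f^{\tau'}|L$ could not be a homeomorphism. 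Your direct version (injectivity forces the orbit of $U_L$ to miss $\partial P$, hence the lift exists and lands in $\check D^*$, hence $m\ge\tau(J)$) is the contrapositive of the same argument, and the ``delicate point'' you flag is resolved exactly as you suspect: the image of the lifted connected neighborhood is connected, lies in a single level, and projects onto $\pi(\check D)$, so by definition it is one of the sets $\check V$ making up $\check D^*$. One minor misdirection: Koebe's Distortion Lemma plays no role in verifying the four conditions for $\{S^+,\tau^+\}$; the extendible scheme is handled by running the identical argument in the Markov extension of $(U_f,f)$, and Koebe only enters later when one wants distortion bounds.

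The genuine gap is Condition (H2), which is one of the four conditions in the statement and is never addressed in your proposal. (H2) asserts that the partition of the inducing domain by the basic elements is Bernoulli generating, i.e.\ that every infinite cylinder $\overline{J_1}\cap\bigcap_{k\ge2}f^{-\tau(J_1)}\circ\cdots\circ f^{-\tau(J_{k-1})}(\overline{J_k})$ is a single point. This does not follow from the Markov structure or from first-return-time considerations: it requires that the diameters of the cylinders shrink to zero, which for these polynomial Julia sets is a consequence of local connectivity and the shrinking-of-pieces theorem of Blokh and Levin that the paper cites precisely at this point (the basic elements are projections of cylinder sets of $\check{\mathcal D}$, and the cited result gives the required contraction). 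Without some such input your argument establishes only (H1), (M) and (C), not the full statement of the lemma.
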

\begin{proof}
We prove the claim for the inducing scheme $\{S,\tau\}$. The proof for the inducing scheme $\{S^+,\tau^+\}$ is similar. By the Markov property of the connected Markov extension, if $\check{f}^{\check{\tau}}(\check{x})\in \check{D}^*$ for some $\check{x}\in\check{D}$ then there exists a (connected) neighborhood $\check{V}_{\check{x}}\ni\check{x}$ such that $\check{f}^{\check{\tau}}|\check{V}_{\check{x}}$ is a homeomorphism with $\pi(\check{f}^{\check{\tau}}(\check{V}_{\check{x}}))=\pi(\check{D})$. Since $\check{D}\in\mathcal{\check{D}}$ is a level set, we have that $\check{V}_{\check{x}}\subseteq\check{D}$. Applying the projection $\pi$ proves Condition (H1). Since $\check{D}$ is a cylinder set, Condition (H2) follows from our assumptions (see \cite[Theorem 3.2]{BloLev02}). In order to prove Condition (M) assume that for some $J\in S$ there exists $\tau'<\tau$ and $L\supseteq J$ such that the restriction $f^{\tau'}|L$ is  homeomorphism onto its image with $f^{\tau'}(L)=\pi(\check{D})$. Consider $\check{L}\subset\check{D}$ with $\pi(\check{L})=L$. Since $\tau=\check{\tau}$ is the first return time of $\check{f}$ to $\check{D}^*$, there exists an integer $k$, $0<k<\tau'$, such that the set $\check{f}^k(\check{L})$ intersects the boundary of the connected Markov extension. But this is impossible since the boundary of the Markov partition consists of critical points and hence the restriction $f^{\tau'}|L$ cannot be a homeomorphism. Condition (C) follows from the definition of the inducing scheme, since $\pi(\check{f}^i(\check{V}_{\check{y}}))$ does not meet the boundary of the partition.
\end{proof}
Observe that since the connected Markov extension is not compact, closed subsets of $\check{D}$ do not necessarily carry invariant measures. Also, even if they do the inducing time defined above may be non-integrable with respect to that measure. So the above lemma does not imply that every map $f$ has an invariant measure, which is absolutely continuous with respect to some conformal measure, even in the current setting where neutral periodic cycles are excluded. We do, however, have the following result.
\begin{thm}\label{abcde}
Let $f$ be a polynomial of degree $d\ge 2$ with a connected, locally connected and full Julia set. Consider the inducing scheme $\{S,\tau\}$ constructed above. Then an $f$-invariant probability $\mu$ of positive entropy supported on the Julia set $J(f)$ is liftable to this inducing scheme provided $\mu(\bigcap_{k\ge 0}F^{-k}(W))>0$.
\end{thm}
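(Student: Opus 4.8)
The plan is to reduce Theorem~\ref{abcde} to Theorem~\ref{mainthm} by verifying that the polynomial map $f|J(f)$, together with the inducing scheme $\{S,\tau\}$ constructed above, satisfies all the hypotheses of Theorem~\ref{mainthm}, and that the quantity $T=\max\{H,h_{top}(\Delta P,f)\}$ can be taken to be $0$. First I would recall that the previous lemma already establishes that $\{S,\tau\}$ satisfies Conditions (H1), (H2), (M) and (C), so the inducing-scheme side of the hypotheses of Theorem~\ref{mainthm} is in place. It remains to check Conditions (P1) and (P2) for $f|J(f)$ with the finite partition $P$ described above, and to observe that the set $X$ associated to the inducing scheme carries the measure $\mu$.

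The heart of the argument is the verification of (P1) and (P2) with $H=0$ and $h_{top}(\Delta P,f)=0$. For (P1): by construction $P$ is a \emph{finite} partition (there are finitely many critical points, and finitely many external rays chosen at each), so $\partial P$ is a finite union of external rays together with the critical points, and $\Delta P=f(\partial P)$ is likewise a finite union of (pieces of) external rays. Since $f$ maps external rays to external rays and each ray is a pre-periodic or wandering arc carrying no entropy, $h_{top}(\Delta P,f)=0<h_{top}(f)$; here $h_{top}(f|J(f))=\log d>0$. For (P2): take $I_0$ to be $J(f)$ minus the (at most countable) union of backward orbits of the finitely many landing points of the rays bounding the partition elements, so that any ergodic measure of positive entropy — being non-atomic — gives full measure to $I_0$; and the requirement $\mathrm{diam}(P_n(x))\to 0$ for $\mu$-a.e.\ $x$ follows from local connectivity of $J(f)$ together with the standard fact (shrinking of nested pieces in the Markov extension / absence of wandering components for such dendrite Julia sets, cf.\ \cite{BloLev02, BruTod07b}) that the diameters of the pieces $P_n(x)$ of the iterated partition shrink to zero. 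Thus $T=0$.

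With (M), (C), (P1) and (P2) in hand and $T=0$, Theorem~\ref{mainthm} applies to any ergodic $\mu\in\mathcal{M}(f,J(f))$ with $h_\mu(f)>0$ that is \emph{supported on $X$}, where $X=\bigcup_{J\in S}\bigcup_{k=0}^{\tau(J)-1}f^k(J)$. To connect this with the hypothesis $\mu(\bigcap_{k\ge 0}F^{-k}(W))>0$ of the present theorem, note that $\bigcap_{k\ge 0}F^{-k}(W)\subset W\subset X$, so the hypothesis gives $\mu(X)\ge\mu(W)>0$; since $X$ is forward-invariant up to the dynamics on the base ($X\subset\bigcup_{k\ge0}f^k(W)$ and $f(X)\subseteq X\cup W$ in the relevant sense) and $\mu$ is ergodic, $\mu(X)=1$, i.e.\ $\mu$ is supported on $X$. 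Then Theorem~\ref{mainthm} yields liftability. If one wants to match the precise phrasing, I would invoke ergodicity once more: the positive-measure $F$-invariant set $\bigcap_{k\ge 0}F^{-k}(W)$ forces $\mu(W)>0$, and $\mu(W)>0$ plus $\mu(X)=1$ is exactly the hypothesis of Theorem~\ref{mainthm}.

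The main obstacle I anticipate is the shrinking-of-pieces part of (P2): one must argue that $\mathrm{diam}(P_n(x))\to 0$ $\mu$-almost everywhere for every ergodic $\mu$ of positive entropy on this dendrite Julia set. Unlike the one-dimensional interval case (where Proposition~\ref{wandering} handled it via absence of wandering intervals), here the cleanest route is to use local connectivity of $J(f)$ — which gives that external-ray-bounded pieces shrink — combined with the structure of the connected Markov extension from \cite{BloLev02, BruTod07b}; alternatively, one excludes the exceptional set of $x$ whose pieces do not shrink and shows this set, being an invariant set on which $f$ acts with a non-degenerate piece, has zero entropy, hence zero $\mu$-measure. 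Making this rigorous may require a short argument (or a citation) that the accumulation of the critical orbit cannot create a positive-diameter stable component, which is where the assumption that $J(f)$ is a locally connected full dendrite does the work.
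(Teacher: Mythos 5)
Your proposal is correct and follows essentially the same route as the paper: reduce to Theorem~\ref{mainthm} with $T=0$ by checking (P1) via the finiteness of the critical set (and the entropy-free external rays) and (P2) via \cite[Theorem 3.2]{BloLev02}, the inducing-scheme conditions having been established in the preceding lemma. Your extra paragraph relating the hypothesis $\mu(\bigcap_{k\ge 0}F^{-k}(W))>0$ to the requirement of Theorem~\ref{mainthm} that $\mu$ be supported on $X$ is a detail the paper leaves implicit, and your argument for it (forward invariance of $X$ plus ergodicity) is sound.
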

\begin{proof}
To prove that $f$ satisfies Condition (P1) it suffices to observe that the boundary of the partition $P$ of the Julia set consists of all the critical points, whose number is finite by our assumption, so the topological entropy $h_{top}(\Delta P, f)=0$. Condition (P2) follows from \cite[Theorem 3.2]{BloLev02}, since invariant measures, which are not supported on the Julia set, are atomic measures on an attracting periodic point and thus have zero entropy (our assumptions exclude the existence of neutral periodic cycles). The result now follows by applying Theorem \ref{mainthm} with $T=0$.
\end{proof}

In the case of the ``extendible'' inducing scheme $\{S^+,\tau^+\}$ the boundary consists of $\sum_{i=0}^m k_{c_i}d_{c_i}$ external rays where $m$ is the total number of critical points and $d_{c_i}$ is the degree of the critical point $c_i\in Cr$. Since the restriction of $f$ to an external ray is conjugated to a homeomorphism, we have $h_{top}(\Delta P, f)=0$ implying Condition (P1) in this case. Condition (P2) again follows from \cite[Theorem 3.2]{BloLev02} as above, and thus we obtain Theorem \ref{abcde} for the inducing scheme with bounded distortion.

In \cite{BruTod07b} (see Corollary 1), Bruin and Todd established liftability of invariant measures of positive entropy to the Markov extension they constructed. This result can also be used to prove Theorem  \ref{abcde} for the special inducing schemes we consider.

\subsection{A special example}
We construct a special example of a multi-dimensional map which illustrates some of our results.

Let $f:[b_1, b_2]\to [b_1, b_2]$ be a unimodal map with the critical
point at $0$ and such that $f(b_1),f(b_2)\in \{b_1,b_2\}$. We assume that $f$ satisfies the Collet-Eckmann condition. Consider
a family of continuous maps $g_t:[0,1]\to [0,1]$, $t\in [b_1, b_2]$ satisfying:
\begin{enumerate}
\item $g_t(0)=g_t(1)=0$, $g_t(\frac12)=1$;
\item both $g_t|(0,\frac12)$ and $g_t|(\frac12,1)$ are $C^{1+\alpha}$ diffeomorphisms satisfying $\left|\frac{d}{ds}g_t(s)\right|\ge a>1$ for any $s\in (0,1)\setminus {\frac12}$;
\item $g_t(s)$ is smooth in $t$ for all $s$.
\end{enumerate}
Consider the skew-product map
$h:R:=[b_1,b_2]\times[0,1]\to [b_1,b_2]\times[0,1]$ given by
$$
h(x,y)=(f(x),g_x(y)),
$$
and denote by $\pi_1$ and $\pi_2$ the projection to the first and second components respectively.

It is easy to see that $h$ is a piecewise invertible map where the partition $P$ consists of four elements
$$
P=\{(b_1, 0)\times (0, \frac12),\, (0, b_2)\times (0, \frac12), \,
(b_1, 0)\times (\frac12,1), \, (0, b_2)\times (0, \frac12) \}.
$$
We describe an inducing scheme for $h$. Notice that for any $k\in\inte$, the set $h^{-k}([b_1,b_2]\times\{\frac12\})$ consists of $2^k$ disjoint smooth curves $\{l_j^k\}_{j=1}^{2^k}$, each curve can be represented as the graph of a function from $[b_1,b_2]$ to $[0,1]$. The set $R\setminus \bigcup_{j=1}^{2^k}l_j^k$ consists of $2^k$ connected components; we denote by $\xi_k$ the collection of these components.

It follows from Theorem \ref{cusp} and Corollary \ref{cor7.6} that there exist a nice set $A\subset[b_1,b_2]$, a collection of intervals $Q$, and an integer-valued function $\tau:Q\to \inte$ such that for all $J\in Q$ one has
$f^{\tau(J)}(J)\simeq A$ (recall that $f^{\tau(J)}(J)\simeq A$ means that $f^{\tau(J)}$ maps $J$ homeomorphically onto $A$).

Define the collection of open sets
$$
Q':=\{J\times[0,1]\cap \eta: \, J\in Q, \, \eta\in \xi_{\tau(J)} \}.
$$
It follows that
$$
f^{\tau(J)}(J\times[0,1]\cap \eta)\simeq A\times(0,1) \,\text{ and
}\, f^{\tau(J)}(J^+\times[0,1]\cap \eta)\simeq A^+\times(0,1).
$$
Set
$$
\mathcal{W}=\bigcup_{\zeta \in Q'}\zeta \,\text{ and }\,
\mathcal{H}|\zeta=h^{\tau(\zeta)}|\zeta
$$
and then
$$
W=\bigcup_{k\ge 0}\mathcal{H}^{-k}(\mathcal{W}) \,\text{ and } \,S=\{\zeta\cap
W: \, \zeta \in Q'\}.
$$
It is easy to see that $\{S,\tau\}$ is an inducing scheme for $h$.

\begin{lem}\label{L12}
The inducing scheme $\{S,\tau\}$ satisfies Conditions (M) and (C).
\end{lem}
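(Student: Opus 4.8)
The plan is to deduce Conditions (M) and (C) for the skew-product inducing scheme $\{S,\tau\}$ from the corresponding properties of the one-dimensional inducing scheme $\{Q,\tau\}$ on $[b_1,b_2]$, which are guaranteed by Theorem~\ref{cusp} and Corollary~\ref{cor7.6}, together with the fact that the fibre maps $g_x$ are uniformly expanding and that $h$ preserves the product structure. The key geometric observation is that each element $\zeta\in Q'$ has the form $(J\times[0,1])\cap\eta$ with $J\in Q$ and $\eta\in\xi_{\tau(J)}$, so $\zeta$ is ``rectangular'' with respect to the foliation by vertical segments: $h^{\tau(J)}$ maps $\zeta$ homeomorphically onto $A\times(0,1)$, with $\pi_1\circ h^{\tau(J)}=f^{\tau(J)}\circ\pi_1$, and the vertical direction is mapped expandingly onto all of $(0,1)$ precisely because $\eta$ was chosen so that no curve $l_j^k$ with $k<\tau(J)$ meets $\zeta$.

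First I would verify Condition (H1)/(M) for $\{S,\tau\}$, i.e.\ that the inducing time $\tau(\zeta)=\tau(J)$ is minimal. Suppose $L\subset R$, $m\in\inte$, and a connected open neighbourhood $U_L\supset L$ exist with $h^m|U_L$ a homeomorphism onto $W$ and $L\cap\zeta\neq\emptyset$ for some $\zeta=(J\times[0,1])\cap\eta\in S$. Projecting by $\pi_1$ and using $\pi_1\circ h^m=f^m\circ\pi_1$, I would obtain a connected set $\pi_1(U_L)\supset\pi_1(L)$ on which $f^m$ is injective with $f^m(\pi_1(L))\supseteq A$ (shrinking $U_L$ if necessary to land exactly on $A$), and $\pi_1(L)\cap J\neq\emptyset$. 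Minimality of the one-dimensional scheme then forces $m\ge\tau(J)=\tau(\zeta)$, which is exactly Condition (M) for $\{S,\tau\}$. (One must check that $h^m|U_L$ being a homeomorphism indeed forces $f^m|\pi_1(U_L)$ to be a homeomorphism; this follows since for $h^m$ to be a homeomorphism the base map $f^m$ must be injective on the relevant connected set, as $h$ is a skew product over $f$.)

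Next I would check Condition (C): for the connected open set $U_\zeta\supset\zeta$ witnessing (M), one needs $h^i(U_\zeta)\cap\partial P=\emptyset$ for $0\le i<\tau(\zeta)$. Here $\partial P$ consists of pieces of the lines $x=0$ and of the line $y=\tfrac12$, i.e.\ $\partial P\subset(\{0\}\times[0,1])\cup([b_1,b_2]\times\{\tfrac12\})$. I would take $U_\zeta=U_J\times[0,1]\cap\eta$ (or a suitable connected open thickening inside $\eta$), where $U_J\supset J$ is the Condition-(C) neighbourhood for the one-dimensional scheme, so that $f^i(U_J)$ avoids $0$ for $0\le i<\tau(J)$; this handles the vertical part of $\partial P$. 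For the horizontal line $y=\tfrac12$: since $h^i(\eta)$ for $0\le i<\tau(J)$ is, by construction of $\xi_{\tau(J)}$, disjoint from $\bigcup_j l_j^0=[b_1,b_2]\times\{\tfrac12\}$ — indeed $\eta$ is by definition a connected component of the complement of $\bigcup_{k\le\tau(J)}h^{-k}([b_1,b_2]\times\{\tfrac12\})$ inside its $\xi$-level, hence $h^i(\eta)\cap([b_1,b_2]\times\{\tfrac12\})=\emptyset$ for $i<\tau(J)$ — the set $h^i(U_\zeta)$ also avoids $y=\tfrac12$ after shrinking. Combining, $h^i(U_\zeta)\cap\partial P=\emptyset$ for $0\le i<\tau(\zeta)$, which is Condition (C).

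The main obstacle I anticipate is the bookkeeping in the minimality argument: one must rule out the possibility that a competitor domain $L$ with $m<\tau(J)$ could ``wrap'' in the fibre direction in a way that is invisible to the base projection $\pi_1$. The point is that if $m<\tau(J)$ then some curve $l_j^m\subset h^{-m}([b_1,b_2]\times\{\tfrac12\})$ separates points of $\eta$ (this is exactly how $\xi_{\tau(J)}$ is built), so $h^m$ restricted to any neighbourhood of $\zeta$ meeting $L$ cannot be a homeomorphism onto a connected set containing $A\times(0,1)$; hence $m<\tau(J)$ is impossible and the base projection argument does capture all obstructions. Making this separation statement precise — that the intermediate preimages of $y=\tfrac12$ genuinely obstruct injectivity of $h^m$ near $\zeta$ — is the crux, but it is forced by the definition of $Q'$ via the partitions $\xi_{\tau(J)}$. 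Everything else is routine verification of the skew-product formulas $\pi_1\circ h=f\circ\pi_1$ and the expanding property of the $g_x$.
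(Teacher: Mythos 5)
Your proposal is correct and follows essentially the same route as the paper: project a competitor domain to the base via $\pi_1$, use the skew-product relation $\pi_1\circ h^m=f^m\circ\pi_1$ to reduce minimality to Condition (M) for the one-dimensional scheme $\{Q,\tau\}$ (guaranteed by Corollary~\ref{cor7.6}), and observe that Condition (C) holds by the construction of $Q'$ via the partitions $\xi_{\tau(J)}$. Your extra discussion of the fibre direction and of why $h^i(U_\zeta)$ avoids $\partial P$ only fleshes out steps the paper treats as immediate.
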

\begin{proof}[Proof of the lemma]
By Corollary \ref{cor7.6} the inducing scheme $\{Q, \tau\}$ for $f$ satisfies Condition (M). Choose a number $m\in\inte$ and a set $L$ such that
$$
h^m(L)\simeq A\times (0,1),\,
L\cap \zeta\ne \emptyset, \,\zeta\in Q'.
$$
Assume that $\zeta=W\cap J\times(0,1)\cap \eta$ for some $J\in S$
and $\eta\in \xi_{\tau(J)}$. It follows that $f^m(\pi_1(L^+))\simeq A^+$,
$f^m(\pi_1(L))\simeq A$, and $\pi_1(L)\cap J\ne \emptyset$. Since
the inducing scheme $\{Q,\tau\}$ satisfies Condition (M) we
have that $m\ge\tau(J)$, which is what we need to prove. Condition (C) is obvious.
\end{proof}

\begin{lem}\label{M12}
The map $h:R\to R$ satisfies Conditions (P1) and (P2) with $H=\log 2$.
\end{lem}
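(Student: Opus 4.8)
The plan is to verify Conditions (P1) and (P2) directly from the structure of the skew-product $h(x,y)=(f(x),g_x(y))$, reducing most questions to the one-dimensional factor $f$ together with uniform expansion in the fibers.

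First I would handle Condition (P1). The partition $P$ consists of four rectangles, so its boundary $\partial P$ is a finite union of horizontal and vertical segments, namely the two vertical segments $\{0\}\times[0,1]$ (over the critical point of $f$) and $[b_1,b_2]\times\{0\},[b_1,b_2]\times\{1\}$, together with the curve $[b_1,b_2]\times\{\tfrac12\}$ where $g_x$ has its turning point. Under $h$ the vertical segment $\{0\}\times[0,1]$ maps to $\{f(0)\}\times[0,1]$, a single fiber, on which $h$ acts as $g_{f(0)}$, a one-dimensional expanding map; its forward orbit under $h$ is carried by the union $\bigcup_{k\ge 0}\{f^k(f(0))\}\times[0,1]$ of finitely or countably many fibers together with their images. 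The point is that the topological entropy of $f$ restricted to the (finite) postcritical-type set of the base, i.e.\ the closure of the forward orbit of $f(0)$ plus the fixed boundary behaviour $f(b_1),f(b_2)\in\{b_1,b_2\}$, together with the one-dimensional expanding dynamics in each of these fibers, is bounded above by $\log 2$; since the Collet--Eckmann unimodal map $f$ is not renormalizable of arbitrarily high period and carries strictly positive topological entropy, and $h$ has $h_{top}(h)\ge h_{top}(f|[b_1,b_2])>0$, one checks $h_{top}(\Delta P,h)\le\log 2$. The cleanest route is to observe $\Delta P = h(\partial P)$ is contained in a finite union of fibers $\{c_i\}\times[0,1]$ and the line $[b_1,b_2]\times\{1\}$ (note $g_x(\tfrac12)=1$ for all $x$), and on each such set $h$ (or an iterate) factors through either a one-dimensional expanding map of the interval (entropy $\le\log 2$) or through $f$ on a proper subset, so $h_{top}(\Delta P,h)\le\log 2=H$, giving (P1) since $h_{top}(h)>\log 2$ — this last inequality follows because $f$ is Collet--Eckmann hence $h_{top}(f)=\log 2$ actually, and the fiber maps add a further factor of $2$, so $h_{top}(h)=2\log 2>\log 2$.

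Next, Condition (P2). I would take $I_0$ to be the set of points $(x,y)$ such that $x\notin\bigcup_{j\ge 0}f^{-j}(\mathcal C_f)$ (where $\mathcal C_f$ is the finite set of endpoints of the intervals of monotonicity of $f$, i.e.\ $\{b_1,0,b_2\}$) and such that $y$ never lands on the turning fiber, i.e.\ $h^k(x,y)\notin[b_1,b_2]\times\{\tfrac12\}$ for all $k$; equivalently $(x,y)\notin\bigcup_{j\ge 0}h^{-j}(\partial P)$. This set is the complement of a countable union of graphs of the form $l_j^k$ (curves) over countably many $x$-intervals, hence its complement $R\setminus I_0$ is a countable union of smooth curves $l^k_j$ together with vertical fibers over a countable set; any $h$-invariant probability measure giving positive mass to $R\setminus I_0$ must either be supported on the countably many fibers over the countable set $\bigcup f^{-j}(\mathcal C_f)$ — in which case it projects to an atomic (periodic) measure for $f$, hence has entropy $\le$ (the fiber contribution) $\le\log 2=H$ — or it gives positive mass to the forward-invariant-under-iterates family of curves $l^k_j$; because these curves are permuted in a way governed by $f$ and the curves project injectively to the base, such a measure again projects to $f$ and its entropy is $\le\log 2$. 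Thus any ergodic $\mu$ with $h_\mu(h)>\log 2$ gives $\mu(R\setminus I_0)=0$. It remains to show $\mathrm{diam}(P_n(x,y))\to 0$ for $\mu$-a.e.\ $(x,y)\in I_0$: the element $P_n(x,y)$ is contained in $J_n(x)\times(\text{a fiber-interval})$ where $J_n(x)$ is the maximal monotonicity interval of $f^n$ containing $x$, and by Corollary~\ref{cor7.6} (no Cantor attractor, no wandering intervals for the Collet--Eckmann map $f$) one has $|J_n(x)|\to 0$ for $\mu_1$-a.e.\ $x$ where $\mu_1=(\pi_1)_*\mu$ — provided $h_{\mu_1}(f)>0$; the vertical extent is controlled by the uniform expansion $|\partial_y g_x|\ge a>1$, so the fiber-side of $P_n(x,y)$ shrinks geometrically. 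One must justify that $h_\mu(h)>\log 2$ forces $h_{\mu_1}(f)>0$: by the Ledrappier--Walters / Abramov--Rokhlin entropy-for-skew-products formula, $h_\mu(h)=h_{\mu_1}(f)+h_\mu(h\mid\pi_1)$ and the conditional (fiber) entropy is at most $\log 2$ since each $g_x$ is two-to-one, so $h_\mu(h)>\log 2$ indeed gives $h_{\mu_1}(f)>0$, and then Corollary~\ref{cor7.6}'s no-wandering-interval / no-Cantor-attractor conclusions for $f$ supply $\mathrm{diam}(J_n(x))\to 0$ a.e.

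The main obstacle I anticipate is the vertical (fiber) direction: neither (P1) nor (P2) is literally a one-dimensional statement, and one has to argue carefully that the ``bad set'' $\bigcup_k h^{-k}(\partial P)$ — which includes the preimage curves $l^k_j$ of the turning fiber, an honestly two-dimensional countable family — carries no measure of entropy exceeding $\log 2$, and that on its complement the partition $P\vee\cdots\vee h^{-n+1}P$ genuinely separates points, i.e.\ that the curves $l^k_j$ become dense enough in the vertical direction. The uniform expansion $a>1$ makes the vertical separation routine (the fibers of $P_n$ have length $\le a^{-(\text{number of fiber-cuts})}$), so the real content is bookkeeping: identifying $R\setminus I_0$ precisely as the countable union of curves and fibers described above, checking that $h$-invariant measures on it project to low-complexity ($\le\log 2$ entropy) measures of $f$, and invoking the Abramov--Rokhlin formula to convert $h_\mu(h)>\log 2$ into $h_{\mu_1}(f)>0$ so that the one-dimensional results of Corollary~\ref{cor7.6} become applicable. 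None of this requires new ideas beyond those already in the paper; it is a matter of combining the one-dimensional Collet--Eckmann structure in the base with uniform hyperbolicity in the fibers.
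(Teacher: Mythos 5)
Your strategy is essentially the paper's: the same choice $I_0=R\setminus\bigcup_{k\ge 0}h^{-k}(\partial P)$, the same reduction of the generating property to the one-dimensional statement in the base plus the uniform fiber expansion $a>1$, and the same $\log 2$ bound coming from the two-to-one fiber maps together with $h_{top}(f)\le\log 2$. The one outright error is the claim that Collet--Eckmann implies $h_{top}(f)=\log 2$, hence $h_{top}(h)=2\log 2$. This is false: Collet--Eckmann unimodal maps occur with every value of topological entropy in $(0,\log 2]$. What you need, and what is true, is only $h_{top}(f)>0$ (the Collet--Eckmann condition yields an acip with positive Lyapunov exponent, hence positive metric entropy), so that $h_{top}(h)=h_{top}(f)+\log 2>\log 2=H$; this is exactly how the paper closes (P1). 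Relatedly, the ``finite postcritical-type set'' you invoke need not be finite for a Collet--Eckmann map, but this is irrelevant: $\Delta P$ contains only the single fiber $\{f(0)\}\times[0,1]$, all of whose points share the same base coordinate at every time, so only the vertical (two-to-one, expanding) direction contributes to its Bowen entropy, giving $\le\log 2$ as in the paper's $(k,\varepsilon)$-spanning argument.

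Your case analysis for measures charging $R\setminus I_0$ also needs repair. Since $g_x(0)=g_x(1)=0$ and $g_x(\tfrac12)=1$, the set $\bigcup_k h^{-k}(\partial P)$ contains, besides the fibers over $\bigcup_j f^{-j}(\mathcal C_f)$ and the curves $l^k_j$, the horizontal lines $y=0$ and $y=1$, and $[b_1,b_2]\times\{0\}$ is forward invariant. This invariant line is precisely where the nontrivial exceptional measures live (there $h$ is conjugate to $f$, so the entropy is at most $h_{top}(f)\le\log 2$), yet it is absent from your dichotomy ``fibers versus curves $l^k_j$''. Conversely, the curves $l^k_j$ carry no invariant measure at all: a point of $y=\tfrac12$ maps into $y=1$ and then into the invariant line $y=0$, so it never returns to $y=\tfrac12$, and Poincar\'e recurrence rules out positive measure there; your vaguer claim that such measures ``project injectively to $f$'' is not what does the work. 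Finally, the Abramov--Rokhlin detour to get $h_{\mu_1}(f)>0$ is harmless but unnecessary: by Proposition~\ref{wandering}, $\mathrm{diam}(P_n(x))\to 0$ for \emph{every} $x$ outside the countable set $\bigcup_j f^{-j}(\mathcal C_f)$, so membership of $(x,y)$ in $I_0$ already gives the shrinking of the base component pointwise, and the vertical component shrinks by the uniform expansion, as you say.
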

\begin{proof}[Proof of the lemma]
It is easy to show that the partition of $(b_1,b_2)$ by $(b_1,0)$ and
$(0,b_2)$ is generating for the map $f$ and that the maps $g_t$ are
expanding with a constant uniform in $t$. It follows that the partition
$P$ for $h$ is also generating. However, since the partition element
$P_n(x)$ is only well-defined outside the set
$\bigcup_{k\ge 0}h^{-k}(\partial P)$, we have that $\text{diam}\, P_n(x)\to 0$ on $R\setminus\bigcup_{k\ge 0}h^{-k}(\partial P)$. The only ergodic
invariant measures supported on $\bigcup_{k\ge 0}h^{-k}(\partial P)$ are supported either on $[b_1,b_2]\times \{0\} $ or on
$\{f^j(0)\}_{j=0}^{p-1}\times [0,1]$ if $0$ is periodic of period
$p$. The entropy of such measures is at most
$\max\{\log 2, h_{top}(f)\}=\log 2$. Condition (P2) is satisfied if we
set
$$
I_0=R\setminus \bigcup_{k\ge 0}h^{-k}(\partial P) \text{ and }
H=\log 2<h_{top}(h)
$$
(see below for the last inequality).

To check Condition (P1) note that $\Delta P=[b_1,b_2]\times \{1\}
\cup {f(0)\times [0,1]}$. We have that
$$
h_{top}([b_1,b_2]\times\{1\})=h_{top}(f)\le\log 2.
$$
Also we claim that
\begin{equation}\label{abcd}
h_{top}(f(0)\times [0,1))\le \log 2.
\end{equation}
To see this notice that for any $\varepsilon>0$, we can pick a number $m$ such that the horizontal diameter of $\xi_m$ is smaller than $\varepsilon$. It follows that
$\{l^{k+m}_j \cap \{0\}\times [0,1]\}_{j=1}^{2^{k+m}}$ is a $(k,\varepsilon)$-spanning set and \eqref{abcd} follows. Since $h$ is
topologically a direct product map,
$$
h_{top}(h)=h_{top}(f)+\log 2>\log 2
$$
implying Condition (P1).
\end{proof}

\bibliographystyle{alpha}
\bibliography{biblio2}
\end{document}